\documentclass[reqno,11pt]{amsart}
\usepackage{amssymb}
\usepackage{verbatim}
\usepackage[hypertex]{hyperref}

\newtheorem{theorem}{Theorem}[section]
\newtheorem{corollary}[theorem]{Corollary}
\newtheorem{lemma}[theorem]{Lemma}
\newtheorem{proposition}[theorem]{Proposition}

\theoremstyle{definition}
\newtheorem{remark}[theorem]{Remark}

\theoremstyle{definition}

\theoremstyle{definition}

\def\bR{\mathbb{R}}

\def\bZ{\mathbb{Z}}

\def\cF{\mathcal{F}}

\DeclareMathOperator{\osc}{osc}

\makeatletter
\def\dashint{\operatorname%
{\,\,\text{\bf--}\kern-.98em\DOTSI\intop\ilimits@\!\!}}
\makeatother

\newcommand{\mysection}[1]{\section{#1}
\setcounter{equation}{0}}

\begin{document}
\title[Non-local elliptic equations]{Schauder estimates for a class of non-local elliptic equations}

\author[H. Dong]{Hongjie Dong}
\address[H. Dong]{Division of Applied Mathematics, Brown University,
182 George Street, Providence, RI 02912, USA}
\email{Hongjie\_Dong@brown.edu}
\thanks{H. Dong was partially supported by the NSF under agreement DMS-0800129 and DMS-1056737.}

\author[D. Kim]{Doyoon Kim}
\address[D. Kim]{Department of Applied Mathematics, Kyung Hee University, 1732 Deogyeong-daero, Giheung-gu, Yongin-si, Gyeonggi-do 446-701, Republic of Korea}
\email{doyoonkim@khu.ac.kr}
\thanks{D. Kim was supported by Basic Science Research Program through the National Research Foundation of Korea (NRF) funded by the Ministry of Education, Science and Technology (2011-0013960).}

\subjclass[2010]{45K05,35B65,60J75}

\keywords{non-local elliptic equations, Schauder estimates, L\'evy processes.}

\begin{abstract}
We prove Schauder estimates for a class of non-local elliptic operators with kernel $K(y)=a(y)/|y|^{d+\sigma}$ and either Dini or H\"older continuous data. Here $0 < \sigma < 2$ is a constant and $a$ is a bounded measurable function, which is not necessarily to be homogeneous, regular, or symmetric. As an application, we prove that the operators give isomorphisms between the Lipschitz--Zygmund spaces $\Lambda^{\alpha+\sigma}$ and $\Lambda^\alpha$ for any $\alpha>0$. Several local estimates and an extension to operators with kernels $K(x,y)$ are also discussed.
\end{abstract}

\maketitle

\mysection{Introduction and Main results}
The objective of this paper is to prove Schauder estimates  for a class of non-local elliptic equations. There is a vast literature on Schauder estimates for second-order elliptic and parabolic equations; see, for instance, \cite{GT, Lieb96} and references therein. If $L=a^{ij}D_{ij}$ is a second-order uniformly elliptic operator with constant coefficients, then the classical Schauder estimate for $L$ in the whole space $\bR^d$ is of the form:
\begin{equation}
                                        \label{eq11.47}
[D^2 u]_{C^\alpha(\bR^d)}\le N[Lu]_{C^\alpha(\bR^d)},
\end{equation}
where $\alpha\in (0,1)$, $[\,\cdot\,]_{C^\alpha(\bR^d)}$ stands for the usual H\"older semi-norm in the whole space, and $N$ is a constant depending only on $d$, $\alpha$, and the ellipticity constant of the coefficients $a^{ij}$. In general, the estimate \eqref{eq11.47} does not hold when $\alpha=0$ or $1$. However, if $Lu$ is Dini continuous (see the definition below), then $D^2 u$ is uniformly continuous. See, for instance, \cite{GT}.

We are interested in this type of estimates for a non-local elliptic equation associated with pure jump L\'evy processes:
\begin{equation}
                                \label{elliptic}
Lu-\lambda u=f
\end{equation}
in the whole space or on a bounded domain.
Here
\begin{equation}
                                    \label{eq23.22.24}
L u (x) = \int_{\bR^d} \left(u(x+y) - u(x)-y\cdot \nabla u(x)\chi^{(\sigma)}(y)\right)K(x,y)\, d y,
\end{equation}
$\lambda\ge 0$ is a constant, $\sigma \in (0, 2)$, $\chi^{(\sigma)}$ is a suitable indicator function, and $K$ is a nonnegative kernel. In the most part of the paper we focus our efforts on the case when $K(x,y)=K(y)$ and the equation is satisfied in the whole space $\bR^d$.
This type of non-local operators naturally arises from models in physics,
finance, and engineering that involve long-range interactions, and has
attracted the attention of many mathematicians. A model case of such operators is the fractional Laplace operator $(-\Delta)^{\sigma/2}$, which has the symbol $|\xi|^\sigma$.
In this case and, in general, if the symbol of the operator is sufficiently smooth and its derivatives satisfy appropriate decays at infinity, Schauder estimates like \eqref{eq11.47} follow directly from the classical theory for pseudo-differential operators, which can be found, for instance, in \cite{St70,St93,Ja01}.
In particular, the boundedness of related pseudo-differential operators in H\"{o}lder spaces is discussed in \cite{St70, St93}.
We also mention that various regularity issues of solutions to non-local elliptic equations, such as the Harnack inequality, H\"{o}lder estimates, and non-local versions of the Aleksandrov--Bakelman--Pucci estimate, were studied first by using probabilistic methods, and more recently, by analytic methods; see, for instance, \cite{Bas02, Bas05_1, Bas05_2} and \cite{CS09, Ka09,Si06, Ba11, CS11, KL10}.

In \cite{DK11_01} by using a purely analytic method, we established $L_p$-estimates for \eqref{elliptic} under the following assumptions on $L$. The nonnegative kernel $K$ is translation invariant with respect to $x$ and satisfies the lower and upper bounds
\begin{equation}
								\label{eq1211}	
(2-\sigma)\frac{\nu}{|y|^{d+\sigma}}
\le K(y) \le (2-\sigma)\frac{\Lambda}{|y|^{d+\sigma}}
\end{equation}
for any $y\in \bR^d$, where $0<\nu\le \Lambda<\infty$ are two constants. The bounds in \eqref{eq1211} can be viewed as an ellipticity condition on the operator $L$. We take
$$
\chi^{(\sigma)}\equiv 0\quad\text{for}\,\,\sigma\in (0,1),
\quad \chi^{(1)}=1_{y\in B_1},\quad
\chi^{(\sigma)}\equiv 1\quad\text{for}\,\,\sigma\in (1,2).
$$
If $\sigma=1$, a cancellation condition is imposed on $K$:
\begin{equation}
                                    \label{eq21.47}
\int_{\partial B_r}y K \, dS_r(y)=
0,\quad \forall r\in (0,\infty),
\end{equation}
where $dS_r$ is the surface measure on $\partial B_r$. Thus the indicator function $\chi^{(1)}$ can be replaced by $1_{B_r}$ for any $r>0$.
We remark that \eqref{eq21.47} is a quite standard condition which is also used, for example, in \cite{MP92}, and it is always satisfied by any symmetric kernels $K(-y)=K(y)$. In \cite{AK09} the principal part of the kernel is assumed to be symmetric when $\sigma=1$, so that, loosely speaking, \eqref{eq21.47} is satisfied. Note that we do not require $K$ to be either homogeneous, regular, or symmetric.
The symbol of $L$ is given by
\begin{equation}
								\label{eq1102}
m(\xi) = \int_{\bR^d} \left(e^{iy\cdot\xi } - 1-iy\cdot \xi\chi^{(\sigma)}(y) \right) K(y) \, dy,
\end{equation}
which generally lacks sufficient differentiability.
Therefore, the classical theory for pseudo-differential operators or the Calder\'on--Zygmund approach is no longer applicable. In this paper, we prove Schauder estimates for \eqref{elliptic} under the very same conditions as in \cite{DK11_01}.
Furthermore, while in \cite{DK11_01} we dealt with $K$ independent of $x$, we prove the same estimates for operators with $x$-dependent kernels under reasonable assumptions on $a(x,y)$.
In fact, the $x$-dependent kernel case follows from a priori Schauder estimates for $x$-independent kernels and the boundedness of corresponding operators, the latter of which is easier than that  in $L_p$-theory.

Perhaps the closest papers to the subject of the present one are Mikulevicius and Pragarauskas \cite{MP92} and Bass \cite{Bas09}. In \cite{MP92} the well-posedness of the Cauchy problem for the operator $L$ in \eqref{eq23.22.24} was obtained in both $L_p$ spaces and H\"older spaces. As the proofs in \cite{MP92} are based on the Calder\'on--Zygmund approach, the kernel $K(x,y)$ is assumed to be homogeneous and sufficiently smooth with respect to $y$, and its derivatives in $y$ to be H\"older continuous with respect to $x$. We also refer the reader to \cite{AK09}, where a similar well-posedness result was proved under the condition that the principal part of the kernel is smooth in $y$ and H\"older in $x$. By using a combination of probabilistic and analytic methods, in \cite{Bas09} Bass established Schauder estimates for non-local operators similar to \eqref{elliptic} under the conditions that $\sigma+\alpha$ is a non-integer and $K(x,y)$ is H\"older continuous in $x$ satisfying \eqref{eq1211}. Compared to \cite{Bas09}, our approach is purely analytic; see also Remark \ref{rm1.33}. Moreover, our results do not require the non-integer condition on $\sigma+\alpha$, and we also deal with equations with Dini continuous data and solutions in more general Lipschitz--Zygmund spaces.

Once this paper has been completed, we learned that in a very recent preprint \cite{MP11} Mikulevicius and Pragarauskas obtained similar solvability results in H\"older spaces for non-local parabolic equations under a slightly weaker ellipticity condition. The proofs in \cite{MP11} are mainly based on a probabilistic argument and their previous results in \cite{MP92}.

Before we state the main result, we fix a few notation. We use $H_2^{\sigma}(\bR^d)$ to denote the Bessel potential space: $H_2^{\sigma}(\bR^d)=(1-\Delta)^{-\sigma/2}L_2(\bR^d)$. Throughout the paper we omit $\bR^d$ in $C^\alpha(\bR^d)$, $C_0^\infty(\bR^d)$, or $H_2^\sigma(\bR^d)$, etc. whenever the omission is clear from the context.
We write $N(d,\nu,...)$ in the estimates to express that the constant $N$ depends only on the parameters $d, \nu, ...$.

A bounded continuous increasing function $\omega$  on $\overline{\bR_+}$ is called a Dini function if $\omega(0)=0$ and
$$
\int_0^1 \omega(s)/s\,ds<\infty.
$$
For a function $f$ in $\bR^d$, we define its modulus of continuity $\omega_{f}$ by
$$
\omega_f(r)=\sup_{x,y\in\bR^d,|x-y|\le r}|f(x)-f(y)|.
$$
We say that a bounded function $f$ in $\bR^d$ is Dini continuous if its modulus of continuity $\omega_{f}$ is a Dini function. In this case, we write $u\in C^{\text{Dini}}$. Clearly, any function in $C^{\alpha}$ is Dini continuous.

Now we state the main results of the paper.

\begin{theorem}
 								\label{mainth00}
Let $0< \sigma < 2$, $\lambda\ge 0$, and $u\in C^2_{\text{loc}} \cap L_\infty$ be a solution to \eqref{elliptic} in $\bR^d$. Assume that $K=K(y)$ satisfies \eqref{eq1211} and, if $\sigma=1$, $K$ also satisfies the condition \eqref{eq21.47}.

i) Suppose $f\in L_\infty$. Then we have
\begin{equation}
                                \label{eq07.09.53}
\lambda \|u\|_{L_\infty}
\le \|f\|_{L_\infty}.
\end{equation}

ii) Suppose $f\in C^{\text{Dini}}$. Then $(-\Delta)^{\sigma/2}u$ is bounded and uniformly continuous. Moreover, the modulus of continuity of $(-\Delta)^{\sigma/2}u$ is controlled by $\omega_f$, and independent of $\lambda$ and $\sigma$ as $\sigma\to 2$.
\end{theorem}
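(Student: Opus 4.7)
\medskip
\noindent\textbf{Proof plan.}

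For \textbf{part (i)}, I would apply a non-local maximum principle via a small perturbation at approximate maxima. Given $u \in C^2_{\text{loc}} \cap L_\infty$ and $\epsilon > 0$, consider $u_\epsilon(x) = u(x) - \epsilon \phi(x)$ for a smooth slowly-growing barrier such as $\phi(x) = (1 + |x|^2)^{\beta/2}$ with $\beta > 0$ chosen small enough that $L\phi$ is uniformly bounded on $\bR^d$ under \eqref{eq1211}. Since $u$ is bounded, $u_\epsilon$ attains its supremum at some finite $x_\epsilon$; there $\nabla u_\epsilon(x_\epsilon) = 0$, $D^2 u_\epsilon(x_\epsilon) \le 0$, and $u_\epsilon(x_\epsilon + y) \le u_\epsilon(x_\epsilon)$ for all $y$. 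Inspecting the three cases $\sigma \in (0,1)$, $\sigma = 1$ (where the cancellation \eqref{eq21.47} is used to absorb the drift correction), and $\sigma \in (1,2)$ shows $Lu_\epsilon(x_\epsilon) \le 0$, hence $Lu(x_\epsilon) \le N\epsilon$. Plugging into the equation gives $\lambda u(x_\epsilon) \le \|f\|_{L_\infty} + N\epsilon$, and letting $\epsilon \to 0^+$ together with the symmetric argument at approximate minima yields \eqref{eq07.09.53}.

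For \textbf{part (ii)}, the plan is a Campanato-type iteration of mean-oscillation estimates on $g := (-\Delta)^{\sigma/2} u$. After mollification I would reduce to the case where $u$, $g$, and $f$ are smooth, using the $L_p$ solvability of \cite{DK11_01} to construct and control the approximating solutions. Fixing $x_0 \in \bR^d$ and a scale $r > 0$, I would split
\begin{equation*}
f = \bigl(f - f(x_0)\bigr)\zeta_r + f^\sharp,
\end{equation*}
where $\zeta_r$ is a smooth cut-off of $B_r(x_0)$ and $f^\sharp$ collects the remainder. The $L_p$ theory of \cite{DK11_01} applied to the first piece yields a correction $v$ which, combined with $|f - f(x_0)| \le \omega_f(r)$ on $B_r(x_0)$ and an interior embedding, satisfies
\begin{equation*}
\|(-\Delta)^{\sigma/2} v\|_{L_\infty(B_{r/2}(x_0))} \le N\omega_f(r).
\end{equation*}
The difference $w = u - v$ then solves a non-local equation whose right-hand side is constant on $B_r(x_0)$, and interior regularity for this frozen problem (obtained by scaling from the smooth case) provides a one-step geometric decay
\begin{equation*}
\osc_{B_{\theta r}(x_0)} g \le \theta^\alpha \osc_{B_r(x_0)} g + N\omega_f(r),
\end{equation*}
with $\theta \in (0,1)$ and $\alpha > 0$ depending only on $d,\nu,\Lambda$. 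Iterating over dyadic scales and summing the accumulated errors using $\int_0^1 \omega_f(s)/s\, ds < \infty$ would produce the boundedness and the uniform continuity of $g$, with modulus controlled by $\omega_f$.

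The principal obstacle will be establishing the one-step decay with constants that are independent of $\lambda$ and stable as $\sigma \to 2^-$. Independence in $\lambda$ requires treating the $\lambda u$ contribution as absorbable data via part (i) throughout the iteration, while stability as $\sigma \to 2^-$ is afforded by the $(2-\sigma)$ normalization built into \eqref{eq1211}; nevertheless every step of the iteration must be monitored to avoid constants that degenerate as $\sigma\uparrow 2$. A secondary technical point is verifying that $(-\Delta)^{\sigma/2} u$ is pointwise well-defined for a generic $u \in C^2_{\text{loc}} \cap L_\infty$, which I would handle by mollification combined with tail bounds derived from part (i).
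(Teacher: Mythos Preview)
Your approach to part (i) is fine; the paper uses a compactly supported bump at an approximate maximum rather than a growing barrier, but either perturbation works.

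For part (ii) you have identified the right overall architecture --- split $f$, solve for a correction $v$, and run a Campanato iteration on $g=(-\Delta)^{\sigma/2}u$ --- but the one-step decay you write down,
\[
\osc_{B_{\theta r}(x_0)} g \;\le\; \theta^\alpha \osc_{B_r(x_0)} g + N\omega_f(r),
\]
is where the plan breaks. This inequality would be correct for a local equation, but here the interior estimate for the homogeneous (frozen right-hand side) problem does \emph{not} control $[g]_{C^{\alpha_0}(B_{r/2})}$ by $\osc_{B_r} g$ alone: the non-local tail forces you to see the solution on all of $\bR^d$. Concretely, the paper's interior estimate (Proposition~\ref{cor1}) gives
\[
[v']_{C^{\alpha_0}(B_{1/2})}\;\le\;N\sum_{k=1}^\infty 2^{-k\sigma}\dashint_{B_{2^k}}\bigl|v'-(v')_{B_{2^k}}\bigr|\,dx + N\osc_{B_1}(-\Delta)^{\sigma/2}\tilde f,
\]
so the right-hand side involves a weighted sum of mean oscillations over \emph{all} larger dyadic balls, and the forcing term itself contributes $\sum_k 2^{-k\sigma}\omega_f(2^k r)$ rather than just $\omega_f(r)$. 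The iteration therefore cannot be closed scale-by-scale as you propose; one must track $M_l=\sup_{x_0}\dashint_{B_{2^l}(x_0)}|g-(g)_{B_{2^l}(x_0)}|$ for all $l\in\bZ$, pass to the resummed quantity $\tilde M_l=\sum_{j\ge 0}2^{-j\sigma/2}M_{l+j}$, and obtain a contractive recursion $\tilde M_l\le N2^{-s\alpha_0}\tilde M_{s+l}+\text{(Dini term)}$, from which the conclusion follows via Lemmas~\ref{lem4.08} and~\ref{lem4.09}. This resummation device to handle the non-local tail is the missing idea.

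A secondary issue: the $L_p$ (in the paper, $L_2$) solvability for $v$ gives only an $L_2$ bound on $(-\Delta)^{\sigma/2}v$, not an $L_\infty$ bound on $B_{r/2}$; the paper absorbs this as an $r^{-d/2}\omega_f$ error in the mean-oscillation estimate rather than via an interior embedding, which would be circular.
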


We have more precise estimates when $f$ is H\"older continuous.

\begin{theorem}[Schauder estimate]
								\label{mainth01}
Let $0< \sigma < 2$, $\lambda\ge 0$, and $u\in C^2_{\text{loc}} \cap L_\infty$ be a solution to \eqref{elliptic} in $\bR^d$. Assume that $K=K(y)$ satisfies \eqref{eq1211} and, if $\sigma=1$, $K$ also satisfies the condition \eqref{eq21.47}.

i) Suppose $\alpha\in (0,1)$ and $f\in C^\alpha$. Then we have $u, (-\Delta)^{\sigma/2}u\in C^\alpha$ and the following estimate holds:
\begin{equation}
                                \label{eq24.09.53}
[(-\Delta)^{\sigma/2}u]_{C^\alpha}+\lambda [u]_{C^\alpha}
\le N [f]_{C^\alpha},
\end{equation}
where $N=N(d,\nu,\Lambda,\sigma,\alpha)$.

ii) Suppose $f\in C^{0,1}$. Then for any $x,y\in \bR^d$ satisfying $|x-y|\le 1/2$ we have
\begin{multline}
                                \label{eq24.09.53z}
|(-\Delta)^{\sigma/2}u(x)-(-\Delta)^{\sigma/2}u(y)|+\lambda|u(x)-u(y)|\\
\le N \big|(x-y)\log|x-y|\big|\|f\|_{C^{0,1}},
\end{multline}
where $N=N(d,\nu,\Lambda,\sigma)$ is uniformly bounded as $\sigma\to 2$.
\end{theorem}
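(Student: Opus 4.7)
The plan is to deduce both parts of Theorem \ref{mainth01} from a quantitative sharpening of Theorem \ref{mainth00}(ii). I expect the proof of that theorem to produce a ``master'' modulus-of-continuity bound of the form
\begin{equation*}
|(-\Delta)^{\sigma/2}u(x)-(-\Delta)^{\sigma/2}u(y)|\le N\left(\int_0^{r}\frac{\omega_f(s)}{s}\,ds+r\int_r^{\infty}\frac{\omega_f(s)\wedge \|f\|_{L_\infty}}{s^{2}}\,ds\right),
\end{equation*}
with $r=|x-y|$ and $N=N(d,\nu,\Lambda,\sigma)$. Both parts of the Schauder theorem will follow by substituting the appropriate modulus of continuity for $f$ and evaluating the two integrals.

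For part (i), set $\omega_f(s)=[f]_{C^\alpha}s^\alpha$. The near-field integral yields $\alpha^{-1}[f]_{C^\alpha}r^\alpha$. The tail integral is split at the threshold $s_0=(2\|f\|_{L_\infty}/[f]_{C^\alpha})^{1/\alpha}$ beyond which $\omega_f\wedge\|f\|_{L_\infty}=\|f\|_{L_\infty}$; the inner piece integrates to $(1-\alpha)^{-1}[f]_{C^\alpha}r^\alpha$, and the outer piece is dominated by $[f]_{C^\alpha}r^\alpha$ (using $r\le s_0$ in one case and direct domination $\|f\|_{L_\infty}\le [f]_{C^\alpha}r^\alpha$ in the other). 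Summing gives $[(-\Delta)^{\sigma/2}u]_{C^\alpha}\le N[f]_{C^\alpha}$.

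For part (ii), take $\omega_f(s)=\min([f]_{C^{0,1}}s,\,2\|f\|_{L_\infty})$. The first integral contributes $[f]_{C^{0,1}}r$, while the tail, after splitting at $s_1=2\|f\|_{L_\infty}/[f]_{C^{0,1}}$, produces the logarithmic factor $[f]_{C^{0,1}}r\log(s_1/r)$ coming from $\int_r^{s_1}ds/s$, plus a harmless $O([f]_{C^{0,1}}r)$ term; for $r\le 1/2$ this yields $N|r\log r|\,\|f\|_{C^{0,1}}$ as claimed.

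To bring the $\lambda$-term into the left-hand side of \eqref{eq24.09.53} and \eqref{eq24.09.53z}, I use the equation $\lambda u=Lu-f$ together with a bound of the type $[Lu]_{C^\alpha}\le N[(-\Delta)^{\sigma/2}u]_{C^\alpha}$ (and its counterpart for the Lipschitz increment). This follows from the boundedness of $L$ on H\"older-type spaces announced in the introduction, because $L$ differs from a multiple of $(-\Delta)^{\sigma/2}$ by an operator whose kernel still satisfies \eqref{eq1211}. I expect the principal obstacle to be the derivation of the master estimate with a right-hand side depending only on the seminorms $[f]_{C^\alpha}$ or $\|f\|_{C^{0,1}}$ rather than on $\|u\|_{L_\infty}$; the saturation at level $\|f\|_{L_\infty}$ in the tail integral together with the elementary splitting arguments above is precisely what makes this possible.
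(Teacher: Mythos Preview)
Your plan has two genuine gaps.

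\textbf{The master estimate is too optimistic.} The quantitative Dini bound you posit,
\[
|(-\Delta)^{\sigma/2}u(x)-(-\Delta)^{\sigma/2}u(y)|\le N\Big(\int_0^{r}\frac{\omega_f(s)}{s}\,ds+r\int_r^{\infty}\frac{\omega_f(s)\wedge\|f\|_{L_\infty}}{s^{2}}\,ds\Big),
\]
has a tail exponent of~$2$ (equivalently, a gain of one full order per scale). What the Dini argument actually delivers is limited by the local H\"older estimate of Lemma~\ref{lem3}/Proposition~\ref{cor1}, which gives regularity only of order $\alpha_0<\min\{1,\sigma\}$. Tracing through the iteration in the proof of Theorem~\ref{mainth00} one obtains, schematically, a tail of the form $r^{\gamma}\int_r^\infty \omega_f(s)s^{-1-\gamma}\,ds$ with $\gamma$ comparable to~$\alpha_0$. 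For $\sigma<1$ this forces $\gamma<\sigma$, so substituting $\omega_f(s)=[f]_{C^\alpha}s^\alpha$ yields a pure seminorm bound only when $\alpha<\gamma<\sigma$; for $\alpha\ge\sigma$ the cap $\wedge\|f\|_{L_\infty}$ is activated and the estimate depends on $\|f\|_{L_\infty}$, not just $[f]_{C^\alpha}$. The paper handles this not by a sharper Dini bound but by running the Campanato iteration directly in the H\"older setting for small $\alpha$ (with $\alpha_0=(\alpha+\min\{1,\sigma\})/2>\alpha$), and then bootstrapping to the full range $\alpha\in(0,1]$ via the finite-difference Lemmas~\ref{lem1} and~\ref{lem2}. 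Your outline omits this bootstrapping step entirely.

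\textbf{The $\lambda$-term is circular.} Your reduction $\lambda u=Lu-f$ requires $[Lu]_{C^\alpha}\le N[(-\Delta)^{\sigma/2}u]_{C^\alpha}$, i.e.\ $C^\alpha$-boundedness of the zeroth-order operator $L\circ(-\Delta)^{-\sigma/2}$. Since $K$ is merely measurable, its symbol $m(\xi)/|\xi|^\sigma$ is bounded but not smooth, and neither the classical multiplier theorems nor the continuity lemma for $L$ on $\Lambda^{\alpha+\sigma}$ (which needs $u\in\Lambda^{\alpha+\sigma}$, not yet known) give this. In the paper the estimate $\lambda[u]_{C^\alpha}\le N[f]_{C^\alpha}$ is obtained \emph{in parallel} with the one for $(-\Delta)^{\sigma/2}u$: one runs the same Campanato iteration on $u$ itself, using the $L_2$ bound $\lambda^2\|w\|_{L_2}^2\le N[f]_{C^\alpha}^2$ for the auxiliary function $w$ (see \eqref{eq11.55}). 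That route avoids any appeal to mapping properties of $L(-\Delta)^{-\sigma/2}$.
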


We note that under the conditions of Theorem \ref{mainth01} ii), $(-\Delta)^{\sigma/2}u$ is actually in the Zygmund space $\Lambda^1$. Moreover, in this case the condition $f\in C^{0,1}$ can be relaxed to $f\in \Lambda^1$; see Section \ref{sec4}.

Our proof is purely analytic and based on Campanato's approach in a novel way. The main step of Campanato's approach is to show that the mean oscillations
of the derivatives of $u$ in balls vanish in certain order as the radii of balls go to zero. Unlike the second-order case, when estimating the mean oscillations for a solution to the non-local equation \eqref{elliptic}, we need to take care of the contribution of the solution from the entire space $\bR^d$. For this purpose, the key estimate in our argument is Proposition \ref{cor1}, in which we bound the H\"older semi-norm of the solution by its mean oscillations in a sequence of expanding balls.

\begin{remark}
                \label{rm0}
In Theorem \ref{mainth01}, the ellipticity condition \eqref{eq1211} can be
relaxed. For instance, we may allow $K$ to vanish for any $y\in B_1^c$, at the cost that $\lambda$ should be assumed to be strictly positive and the constants $N$ in \eqref{eq24.09.53} and \eqref{eq24.09.53z} depend also on $\lambda$. See the remark at the end of Section \ref{sec3} for a proof.
\end{remark}

\begin{remark}      \label{rm1.33}
Estimates similar to \eqref{eq24.09.53} were obtained in \cite{Bas09} with the restriction that $\sigma+\alpha$ is not an integer. In the case $\sigma\in (1,2)$, the operators studied in \cite{Bas09} are slightly different from those considered in the current paper. The proofs there use a combination of probabilistic and analytic arguments and, in particular, use certain properties of semigroups generated by stable-like processes. It is however not clear to us if the methods in \cite{Bas09} can be exploited to treat the case when $f$ is only Dini continuous. We also remark that although the cancellation condition \eqref{eq21.47} is omitted in \cite{Bas09}, it is actually implicitly used in the proof of Corollary 5.2 there.
\end{remark}

\begin{remark}
In the borderline case $\alpha=1$, it is well known that \eqref{eq24.09.53} does not hold even for the second-order Poisson equation. An estimate similar to \eqref{eq24.09.53z} for the second-order elliptic equations can be found in \cite{Wa06}. Our treatment of this case is rather elementary.
\end{remark}


Next we state the following solvability result, which implies that $L-\lambda$ is an isomorphism from $\Lambda^{\alpha+\sigma}$ to $\Lambda^{\alpha}$ for any $\alpha>0$. See Section \ref{sec4} for the definition of the Lipschitz--Zygmund spaces $\Lambda^\alpha$.
\begin{theorem}[Solvability in Lipschitz--Zygmund spaces]
                        \label{thm2}
Let $0< \sigma < 2$, $\alpha>0$, and $\lambda>0$. Assume that $K$ satisfies \eqref{eq1211} and, in the case $\sigma=1$, it also satisfies the cancellation condition \eqref{eq21.47}.

i) Then $L-\lambda$ is a continuous operator from $\Lambda^{\alpha+\sigma}$ to $\Lambda^{\alpha}$, i.e., for any $u\in \Lambda^{\alpha+\sigma}$ we have
\begin{equation}
                                            \label{eq22.100}
\|(L-\lambda) u\|_{\Lambda^{\alpha}}\le N\|u\|_{\Lambda^{\alpha+\sigma}},
\end{equation}
where $N=N(d,\Lambda,\sigma,\alpha,\lambda)>0$ is a constant.
Moreover, for any $u\in \Lambda^{\alpha+\sigma}$,
\begin{equation}
                                    \label{eq17.24}
\|u\|_{\Lambda^{\alpha+\sigma}}\le N\|Lu-\lambda u\|_{\Lambda^{\alpha}},
\end{equation}
where $N=N(d,\nu,\Lambda,\sigma,\alpha,\lambda)>0$.

ii) For any $f\in \Lambda^\alpha$, there is a unique solution $u\in \Lambda^{\alpha+\sigma}$ of the equation \eqref{elliptic} in $\bR^d$ satisfying \eqref{eq07.09.53}.
Furthermore, $u$ satisfies \eqref{eq24.09.53} if $\alpha\in (0,1)$, and satisfies
\begin{equation}
										\label{eq0316_01}
|(-\Delta)^{\sigma/2}u(x)-(-\Delta)^{\sigma/2}u(y)|+\lambda|u(x)-u(y)|\\
\le N \big|(x-y)\log|x-y|\big|\|f\|_{\Lambda^1}	
\end{equation}
for any $|x-y|\le 1/2$ if $\alpha=1$.

The constants $N$ are uniformly bounded as $\sigma\to 2$.
\end{theorem}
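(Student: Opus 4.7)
\emph{Plan.} The argument decomposes naturally into (a) the continuity estimate \eqref{eq22.100}, (b) the a priori estimate \eqref{eq17.24}, and (c) the solvability asserted in part ii).

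For (a), I would fix an integer $m>\alpha+\sigma$ and use the higher-order difference characterization $\|g\|_{\Lambda^s}\sim\|g\|_{L_\infty}+\sup_{h\ne 0}|h|^{-s}\|\Delta_h^m g\|_{L_\infty}$. Because $K=K(y)$, the operator $L$ commutes with translations and hence with $\Delta_h^m$, so it suffices to bound $\|L(\Delta_h^m u)\|_{L_\infty}\le N|h|^\alpha\|u\|_{\Lambda^{\alpha+\sigma}}$. Splitting the integral in \eqref{eq23.22.24} at $|y|=|h|$, the far piece is dominated by $\|\Delta_h^m u\|_{L_\infty}\le N|h|^{\alpha+\sigma}\|u\|_{\Lambda^{\alpha+\sigma}}$ paired with $\int_{|y|>|h|}(2-\sigma)|y|^{-d-\sigma}\,dy\sim|h|^{-\sigma}$; the near piece is handled via Taylor expansion of $\Delta_h^m u$ and iterated-difference bounds, which are integrable against the kernel $|y|^{-d-\sigma}$. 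The cancellation condition \eqref{eq21.47} is used exactly as in \cite{DK11_01} when $\sigma=1$.

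For (b), I would proceed by induction on $\lfloor\alpha\rfloor$. The base cases $\alpha\in(0,1)$ and $\alpha=1$ reduce to Theorem \ref{mainth01} i) and ii) combined with Theorem \ref{mainth00}. For the inductive step, since $L$ commutes with $\partial_{x_i}$, each spatial derivative $\partial^\gamma u$ satisfies $L(\partial^\gamma u)-\lambda \partial^\gamma u=\partial^\gamma f$. Applying the base case to $\partial^\gamma u$ with $|\gamma|=\lfloor\alpha\rfloor$ controls $(-\Delta)^{\sigma/2}\partial^\gamma u$ in the appropriate space; combined with \eqref{eq07.09.53} and the isomorphism property of $(-\Delta)^{\sigma/2}:\Lambda^{\alpha+\sigma}\to\Lambda^{\alpha}$ on bounded functions, this yields $\|u\|_{\Lambda^{\alpha+\sigma}}\le N\|f\|_{\Lambda^{\alpha}}$.

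Part (c) is handled by approximation. Mollify $f\in\Lambda^\alpha$ to $f_\varepsilon\in C_0^\infty$ with $\|f_\varepsilon\|_{\Lambda^\alpha}\le N\|f\|_{\Lambda^\alpha}$ and $f_\varepsilon\to f$ locally uniformly; produce smooth solutions $u_\varepsilon$ of $Lu_\varepsilon-\lambda u_\varepsilon=f_\varepsilon$ via the $L_p$-solvability of \cite{DK11_01}. The a priori estimate (b) gives uniform control on $\|u_\varepsilon\|_{\Lambda^{\alpha+\sigma}}$, so an Arzel\`a--Ascoli extraction produces a limit $u\in\Lambda^{\alpha+\sigma}$ solving \eqref{elliptic}; the quantitative bounds \eqref{eq24.09.53} and \eqref{eq0316_01} are inherited from the approximating sequence. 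Uniqueness follows from \eqref{eq07.09.53}. The main obstacle is the inductive step in (b) at integer values of $\alpha$ or $\alpha+\sigma$, where the Zygmund characterization genuinely differs from the H\"older one, and keeping the constants uniform as $\sigma\to 2$ requires tracking the factor $(2-\sigma)$ throughout the near/far splitting in (a).
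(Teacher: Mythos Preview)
Your plan for (a) and (c) is viable, though it differs from the paper. For (a) the paper works directly with the harmonic-extension definition of $\Lambda^\alpha$: it shows $\|LP(\cdot,y)\|_{L_1}\le N y^{-\sigma}$ (Lemma \ref{lem3.5}) and then mimics the proof of Lemma \ref{lem3.4}. For (c) the paper first solves the model equation $-(-\Delta)^{\sigma/2}u-\lambda u=f$ explicitly via the Green's function $G_\sigma$ (Lemma \ref{lem3.3}), obtains a Cauchy sequence in $\Lambda^{\sigma+\beta}$ for some $\beta<\alpha$ rather than relying on Arzel\`a--Ascoli, and then passes to general $L$ by the method of continuity; your route through the $L_p$ theory of \cite{DK11_01} would also work but needs extra care when passing the nonlocal operator to the limit.

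The genuine gap is in (b), precisely at the obstacle you flag. Your stated base case $\alpha=1$ does \emph{not} reduce to Theorem \ref{mainth01} ii): that result assumes $f\in C^{0,1}$, which is strictly smaller than $\Lambda^1$, and even under that stronger hypothesis it only yields a log-Lipschitz bound on $(-\Delta)^{\sigma/2}u$, not membership in $\Lambda^1$. So \eqref{eq17.24} at $\alpha=1$ is not established, and your induction via $\partial^\gamma$ stalls at every integer (note also that for integer $\alpha$ one cannot take $|\gamma|=\lfloor\alpha\rfloor=\alpha$, since $f\in\Lambda^\alpha$ does not make $D^\alpha f$ bounded).

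The paper avoids this completely: because $L$ commutes with the Bessel potential $(1-\Delta)^{s/2}$, Property (6) in Section \ref{sec4} reduces \eqref{eq17.24} for \emph{all} $\alpha>0$ to a single $\alpha\in(0,1)$, where Theorem \ref{mainth01} i), mollification, and Lemma \ref{lem3.4} suffice. No integer base case is ever needed. The separate estimate \eqref{eq0316_01} (which is not \eqref{eq17.24}) is obtained by revisiting the proof of Theorem \ref{mainth01} ii) and invoking Lemma \ref{lem3.2}, which upgrades the bound $[f_{\beta,h}]_{C^{1-\beta}}\le N[f]_{C^{0,1}}$ to $[f_{\beta,h}]_{C^{1-\beta}}\le N\|f\|_{\Lambda^1}$. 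If you wanted to salvage your inductive scheme, this same lemma would let you handle $\alpha=1$ by applying Theorem \ref{mainth01} i) to $u_{\beta,h}$ with exponent $1-\beta$ and then reassembling; but the Bessel-potential reduction is cleaner.
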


To deal with $x$-dependent kernels, we derive the following local estimates from Theorem \ref{mainth01} by using a more or less standard localization argument.
\begin{corollary}
                                    \label{thm3}
Let $\sigma\in (0,2)$ and $\alpha\in (0,1)$ be constants. Suppose that $K$ satisfies the same assumptions as in Theorem \ref{thm2} and $u \in C^2(\bR^d)$ satisfies $L u -\lambda u= f$ in $B_3$ for $f \in C^\alpha(B_3)$ and some constant $\lambda\ge 0$.
Then we have
\begin{equation}
                                \label{eq21.06}
[(-\Delta)^{\sigma/2}u]_{C^\alpha(B_1)}\le
N\|f\|_{C^\alpha(B_3)}+N\|u\|_{C^\alpha(\bR^d)}
\end{equation}
for $\sigma\in (0,1)$,
\begin{equation}
                                \label{eq21.06b}
[(-\Delta)^{\sigma/2}u]_{C^\alpha(B_1)}\le
N\|f\|_{C^\alpha(B_3)}+N\|u\|_{C^{1+\alpha}(\bR^d)}
\end{equation}
for $\sigma\in (1,2)$, and
\begin{equation}
                                \label{eq21.06c}
[(-\Delta)^{\sigma/2}u]_{C^\alpha(B_1)}\le N\|f\|_{C^\alpha(B_3)}
+ N(\varepsilon)\|u\|_{L_\infty(\bR^d)}+\varepsilon\|u\|_{C^{1+\alpha}(\bR^d)}
\end{equation}
for $\sigma=1$ and any $\varepsilon\in (0,1)$.
\end{corollary}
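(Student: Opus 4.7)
The strategy is a standard localization argument. Fix a smooth cutoff $\eta \in C_0^\infty(\bR^d)$ with $\eta \equiv 1$ on $B_2$ and $\eta \equiv 0$ outside $B_{5/2}$, and set $v := \eta u$. Since $u \in C^2(\bR^d)$, $v$ is $C^2$ and compactly supported, hence $v \in C^2_{\text{loc}} \cap L_\infty$, so Theorem \ref{mainth01} i) applies to $v$ and yields $[(-\Delta)^{\sigma/2} v]_{C^\alpha(\bR^d)} \leq N[F]_{C^\alpha(\bR^d)}$ with $F := Lv - \lambda v$. The plan is: (a) bound $[F]_{C^\alpha(\bR^d)}$ by the right-hand sides of \eqref{eq21.06}--\eqref{eq21.06c}; (b) invoke the global Schauder estimate above; and (c) compare $(-\Delta)^{\sigma/2} u$ with $(-\Delta)^{\sigma/2} v$ on $B_1$ via the identity $(-\Delta)^{\sigma/2} u - (-\Delta)^{\sigma/2} v = (-\Delta)^{\sigma/2}((1-\eta) u)$.

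To estimate $F$ on $\bR^d$, I split into three regions. On $B_2$, where $\eta \equiv 1$ and hence $v = u$, $\nabla v = \nabla u$, direct substitution gives
\[
F(x) = f(x) + \int (\eta(x+y) - 1) u(x+y) K(y)\, dy,
\]
and for $x \in B_1$ the tail integrand is supported in $|y| \geq 1$. Translating the integration variable turns this tail into $\int h(w+x)\,K(w)\,dw$ with $h := (\eta - 1)u$, and its $C^\alpha$-seminorm on $B_1$ is therefore bounded by $N[h]_{C^\alpha} \int_{|w|\ge 1} K(w)\,dw \leq N\|u\|_{C^\alpha(\bR^d)}$, bypassing any need to differentiate the non-smooth kernel $K$. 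On $\bR^d \setminus B_{5/2}$, where $v$ and $\nabla v$ vanish near $x$, $F(x) = \int v(x+y) K(y)\, dy$ is smooth in $x$ with rapid decay at infinity. The delicate region is the transition zone $B_{5/2} \setminus B_2$, where $\nabla v = u \nabla \eta + \eta \nabla u$ brings derivatives of $u$ into the drift term $y \cdot \nabla v(x) \chi^{(\sigma)}(y) K(y)$ in $Lv(x)$.

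The three cases of the corollary correspond to the behavior of $\chi^{(\sigma)}$. For $\sigma \in (0,1)$, $\chi^{(\sigma)} \equiv 0$, no drift appears, and $\|u\|_{C^\alpha(\bR^d)}$ suffices in the transition-zone bound, yielding \eqref{eq21.06}. For $\sigma \in (1,2)$, $\chi^{(\sigma)} \equiv 1$ keeps $\eta \nabla u$ in the drift at all scales, producing the $\|u\|_{C^{1+\alpha}(\bR^d)}$-type bound \eqref{eq21.06b}. In the borderline case $\sigma = 1$, $\chi^{(1)} = 1_{B_1}$: the $\sigma > 1$ argument, applied after a careful split at $|y| = 1$, still gives a $\|u\|_{C^{1+\alpha}}$-type contribution to $[F]_{C^\alpha}$ from the transition zone; applying the Hölder interpolation $[u]_{C^\alpha} \leq \varepsilon \|u\|_{C^{1+\alpha}} + N(\varepsilon)\|u\|_{L_\infty}$ to the $C^\alpha$-contribution arising from the $B_2$-tail then produces the $\varepsilon$-dependent form \eqref{eq21.06c}.

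For step (c), since $(1-\eta) u$ and its gradient vanish on $B_2 \supset B_1$, for $x \in B_1$ the formula for the fractional Laplacian reduces to
\[
(-\Delta)^{\sigma/2}((1-\eta) u)(x) = -c_{d,\sigma}\int ((1-\eta) u)(x+y)\,|y|^{-d-\sigma}\,dy,
\]
with no drift contribution, and the integrand is supported in $|y| \geq 1$. Because the fractional-Laplacian kernel $|y|^{-d-\sigma}$ is smooth away from the origin, a mean-value argument in $x$ yields $[(-\Delta)^{\sigma/2}((1-\eta) u)]_{C^\alpha(B_1)} \leq N\|u\|_{L_\infty(\bR^d)}$, which is absorbed into the claimed right-hand sides. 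The main technical obstacle I anticipate is the careful bookkeeping in the transition zone $B_{5/2} \setminus B_2$ — especially the $\sigma = 1$ case, where the discontinuity of $\chi^{(1)}$ at $|y| = 1$ forces a delicate split of the drift integral — together with verifying that the constants remain uniform in $\sigma$ as advertised.
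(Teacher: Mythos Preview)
Your overall strategy---localize with a cutoff $\eta$, apply the global Schauder estimate of Theorem~\ref{mainth01} to $v=\eta u$, then compare $(-\Delta)^{\sigma/2}u$ with $(-\Delta)^{\sigma/2}v$ on $B_1$---is correct and is exactly what the paper does. The paper organizes step (a) more cleanly by writing $F=Lv-\lambda v=\eta f+h$ \emph{globally}, where the commutator $h(x)=L(\eta u)(x)-\eta(x)\,Lu(x)$ has the single representation
\[
h(x)=\int_{\bR^d}\Big(\big(\eta(x+y)-\eta(x)\big)u(x+y)-y\cdot\nabla\eta(x)\,u(x)\,\chi^{(\sigma)}(y)\Big)K(y)\,dy
\]
valid for all $x\in\bR^d$, and then estimates $[h]_{C^\alpha(\bR^d)}$ directly. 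This avoids the awkwardness of your region-by-region analysis (your three regions partition $\bR^d$ rather than overlap, and you examine the ``$B_2$-tail'' only for $x\in B_1$ although you need $[F]_{C^\alpha(\bR^d)}$). That said, for $\sigma\in(0,1)$ and $\sigma\in(1,2)$ your sketch is essentially right and leads to \eqref{eq21.06} and \eqref{eq21.06b}.

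The real gap is the case $\sigma=1$. You split at the \emph{fixed} scale $|y|=1$ and correctly note that the resulting drift terms produce a $\|u\|_{C^{1+\alpha}}$-type contribution---but that contribution carries a \emph{fixed} constant $N$, not $\varepsilon$. Interpolating the separate $C^\alpha$ tail into $\varepsilon\|u\|_{C^{1+\alpha}}+N(\varepsilon)\|u\|_{L_\infty}$ does nothing to reduce that fixed $N\|u\|_{C^{1+\alpha}}$ term, so your argument delivers only the $\sigma\in(1,2)$-type bound \eqref{eq21.06b}, not \eqref{eq21.06c}. The missing idea is a \emph{free} scale parameter $\delta\in(0,1)$: using the cancellation condition \eqref{eq21.47} one may replace $\chi^{(1)}=1_{B_1}$ by $1_{B_\delta}$ in the commutator, split the integral at $|y|=\delta$, and obtain contributions of order $N\delta\,\|u\|_{C^{1+\alpha}}$ from $B_\delta$ and $N\delta^{-1}\|u\|_{C^\alpha}$ from $B_\delta^c$. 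Choosing $\delta$ small (and then interpolating the $C^\alpha$ piece) is precisely what puts the $\varepsilon$ in front of $\|u\|_{C^{1+\alpha}}$.
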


Our last result is about the solvability for non-local operators with $x$-dependent kernels.

\begin{theorem}
                                \label{thm4}
Let $\sigma\in (0,2)$, $\alpha\in (0,1)$, and $\lambda>0$ be constants. Assume that $K(x,y)=a(x,y)|y|^{-d-\sigma}$, where $a(\cdot,y)$ is a $C^\alpha$ function for any $y\in \bR^d$ with a uniform $C^\alpha$ norm. In the case $\sigma=1$, the condition \eqref{eq21.47} is also satisfied for any $x\in \bR^d$. Then $L$ defined in \eqref{eq23.22.24} is a continuous operator from $\Lambda^{\alpha+\sigma}$ to $C^{\alpha}$. Moreover, for any $f\in C^\alpha$, there is a unique solution $u\in \Lambda^{\alpha+\sigma}$ of the equation \eqref{elliptic} in $\bR^d$ satisfying
$$
\|u\|_{\Lambda^{\sigma+\alpha}}\le N\|f\|_{C^\alpha},
$$
where $N=N(d,\Lambda,\sigma,\alpha,\lambda,\sup_y\|a(\cdot,y)\|_{C^\alpha})>0$
is independent of $\lambda$ if $\lambda\ge 1$ and uniformly bounded as $\sigma\to 2$.
\end{theorem}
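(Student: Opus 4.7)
\emph{Plan.} I would structure the proof in three steps: continuity of $L:\Lambda^{\alpha+\sigma}\to C^\alpha$, an a priori estimate $\|u\|_{\Lambda^{\alpha+\sigma}}\le N\|Lu-\lambda u\|_{C^\alpha}$ obtained by freezing the coefficient, and existence and uniqueness via the method of continuity from the solvability for constant kernels given by Theorem~\ref{thm2}.

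\emph{Continuity and the a priori estimate.} For each fixed $z\in\bR^d$ let $L^z$ denote the frozen operator obtained by replacing $K(x,y)$ with $K(z,y)$. It is $x$-independent and satisfies \eqref{eq1211} and (when $\sigma=1$) \eqref{eq21.47} uniformly in $z$, so Theorem~\ref{thm2}~(i) gives $\|L^zu\|_{C^\alpha}\le N\|u\|_{\Lambda^{\alpha+\sigma}}$ with $N$ independent of $z$. Setting $H_u(x,y)=u(x+y)-u(x)-y\cdot\nabla u(x)\chi^{(\sigma)}(y)$, I would write
\begin{equation*}
Lu(x)-L^zu(x)=\int_{\bR^d}H_u(x,y)\bigl(a(x,y)-a(z,y)\bigr)|y|^{-d-\sigma}\,dy,
\end{equation*}
and estimate this using $|a(x,y)-a(z,y)|\le\min(2\|a\|_\infty,N|x-z|^\alpha)$ together with the standard pointwise bound $|H_u(x,y)|\le N\|u\|_{\Lambda^{\alpha+\sigma}}\min(|y|^{\alpha+\sigma},1+|y|)$, obtaining $|Lu(x)-L^zu(x)|\le N|x-z|^\alpha\|u\|_{\Lambda^{\alpha+\sigma}}$. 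Taking $z=x$ controls $\|Lu\|_\infty$, and combining with the frozen estimate controls $[Lu]_{C^\alpha}$. The same decomposition applied on a small ball $B_r(x_0)$ shows that $\|(L-L^{x_0})u\|_{C^\alpha(B_r(x_0))}$ carries a small factor $r^\alpha$ on the near-$y$ part while the far-$y$ tail can be absorbed into a lower-order norm of $u$. Inserting this into $L^{x_0}u-\lambda u=f-(L-L^{x_0})u$, appealing to Theorem~\ref{thm2} locally via a partition of unity, and interpolating intermediate H\"older norms of $u$ against $\|u\|_{\Lambda^{\alpha+\sigma}}$ and $\|u\|_{L_\infty}$, I absorb the perturbation for $r$ small and reach $\|u\|_{\Lambda^{\alpha+\sigma}}\le N\|f\|_{C^\alpha}+N\|u\|_{L_\infty}$. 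Since the maximum-principle argument of Theorem~\ref{mainth00}~(i) uses only the nonnegativity of $K$, it extends verbatim to $x$-dependent kernels and yields $\lambda\|u\|_{L_\infty}\le\|f\|_{L_\infty}$, which absorbs the lower-order remainder.

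\emph{Existence and main obstacle.} For existence I would use the method of continuity, deforming $L$ to the constant-coefficient reference operator $L_0$ with $a\equiv c$ for some $c\in[\nu,\Lambda]$ through the family $L_t=(1-t)L_0+tL$, whose kernel $\bigl((1-t)c+ta(x,y)\bigr)|y|^{-d-\sigma}$ satisfies \eqref{eq1211} and (when $\sigma=1$) \eqref{eq21.47} uniformly in $t\in[0,1]$ and retains the same $C^\alpha$ bound on $a_t(\cdot,y)$; the a priori estimate is therefore uniform in $t$. Solvability at $t=0$ is furnished by Theorem~\ref{thm2}~(ii), and the method of continuity then yields solvability at $t=1$, while uniqueness follows from the a priori estimate applied to a difference of two solutions. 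The main technical obstacle is the localization step: because the operator is nonlocal, the difference $(L-L^{x_0})u$ integrates against $a(x,y)-a(x_0,y)$ for all $y\in\bR^d$, so one must carefully split the region $|y|\le r$ (where H\"older continuity in $x$ provides the gain of $r^\alpha$) from $|y|>r$ (where only boundedness of $a$ is available, and one must exploit the decay $|y|^{-d-\sigma}$ together with the $y\cdot\nabla u$ cancellation in $H_u$). Arranging these tail estimates so that they are quantitatively compatible with the interpolation used to absorb them into $\|u\|_{\Lambda^{\alpha+\sigma}}$ is the delicate point of the argument.
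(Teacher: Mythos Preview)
Your strategy matches the paper's: continuity via a frozen-coefficient comparison (the paper's Corollary~\ref{cor3.5}), an a priori estimate by localization and perturbation from the constant-kernel case, the maximum principle (Lemma~\ref{lem2.3} is already stated for $x$-dependent kernels), and the method of continuity.

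One point is worth sharpening. The small factor in the perturbation estimate does not arise from a $y$-splitting as you describe. The H\"older bound $|a(x,y)-a(x_0,y)|\le N|x-x_0|^\alpha$ holds uniformly in $y$, so the entire integral $(L-L^{x_0})u(x)$ already carries the gain $r^\alpha$ in $L_\infty(B_r(x_0))$. The actual subtlety in controlling the $C^\alpha$-seminorm is that the difference quotient in $x$ hits $a(x,y)$, producing a term with no smallness but only a lower-order norm $\|u\|_{\Lambda^{\sigma+\beta}}$ for some $\beta<\alpha$ (this is the content of Corollary~\ref{cor3.5}); that term is then handled by interpolation. The nonlocal issue that genuinely requires care is the $x$-localization: passing from the global estimate of Theorem~\ref{thm2} to a local one forces you to control the commutator $L(\eta u)-\eta Lu$, and this is where the paper invests effort (Corollary~\ref{thm3}). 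Your phrase ``Theorem~\ref{thm2} locally via a partition of unity'' hides exactly this step.
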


We prove Theorems \ref{mainth00} and \ref{mainth01}, in Section \ref{sec3} after discussing some auxiliary results in Section \ref{sec2}.
As an application of Theorem \ref{mainth01}, in Section \ref{sec4} we derive the unique solvability of \eqref{elliptic} in the Lipschitz--Zygmund spaces $\Lambda^\alpha$, i.e., Theorem \ref{thm2}, which together with a continuity estimate implies that the operator $L-\lambda$ gives an isomorphism between $\Lambda^{\alpha+\sigma}$ and $\Lambda^\alpha$ for any $\alpha>0$.
Section \ref{sec5} is devoted to the proofs of Corollary \ref{thm3} and Theorem \ref{thm4}.

\mysection{Auxiliary estimates}							\label{sec2}

The objective of this section is to present several auxiliary estimates, which will be used in the proofs of the main theorems.
For the proof of Theorem \ref{mainth00}, we need the following properties of Dini functions.
\begin{lemma}
                                                    \label{lem4.08}
Suppose that $\omega$ is a Dini function, and
\begin{equation}
                                    \label{eq4.22}
\tilde \omega(t):=\sum_{k=0}^\infty a^k \omega(b^k t)
\end{equation}
for some constants $a\in (0,1)$ and $b>1$.
Then $\tilde \omega$ is also a Dini function.
\end{lemma}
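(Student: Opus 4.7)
The plan is to verify the four defining properties of a Dini function for $\tilde\omega$, namely: (i) it is bounded, (ii) continuous, (iii) increasing with $\tilde\omega(0)=0$, and (iv) $\int_0^1 \tilde\omega(s)/s\,ds<\infty$. Properties (i)--(iii) are essentially automatic and follow term-by-term: boundedness of $\omega$ gives $a^k\omega(b^kt)\le \|\omega\|_{L_\infty} a^k$, so the series converges uniformly by the Weierstrass $M$-test, giving both boundedness and continuity of $\tilde\omega$; monotonicity is inherited since each summand is increasing in $t$ (as $b^k>0$ and $\omega$ is increasing); and $\tilde\omega(0)=\sum_k a^k\omega(0)=0$.

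The real content is (iv). I would start from
\begin{equation*}
\int_0^1 \frac{\tilde\omega(s)}{s}\,ds
=\sum_{k=0}^\infty a^k \int_0^1 \frac{\omega(b^k s)}{s}\,ds,
\end{equation*}
where Tonelli's theorem justifies the interchange since all terms are nonnegative. Next, for each $k$ I would apply the change of variables $u=b^k s$ to rewrite
\begin{equation*}
\int_0^1 \frac{\omega(b^k s)}{s}\,ds=\int_0^{b^k}\frac{\omega(u)}{u}\,du.
\end{equation*}

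The last step is to split this integral at $u=1$ and estimate the two pieces separately. The piece $\int_0^1 \omega(u)/u\,du$ is a finite constant (this is precisely the Dini hypothesis on $\omega$), and summing against $\sum_k a^k=1/(1-a)$ yields a finite contribution. For the tail, I would use the crude bound $\omega\le\|\omega\|_{L_\infty}$ to get $\int_1^{b^k}\omega(u)/u\,du\le \|\omega\|_{L_\infty}\,k\log b$, whence the remaining contribution is controlled by
\begin{equation*}
\|\omega\|_{L_\infty}\log b\sum_{k=0}^\infty k a^k =\|\omega\|_{L_\infty}\log b\cdot\frac{a}{(1-a)^2}<\infty,
\end{equation*}
which is finite precisely because $a<1$. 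Combining the two pieces gives $\int_0^1\tilde\omega(s)/s\,ds<\infty$, completing the proof.

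There is no real obstacle here; the only step that requires a small observation is the change of variables converting $\int_0^1\omega(b^ks)/s\,ds$ into $\int_0^{b^k}\omega(u)/u\,du$, and the recognition that the condition $a<1$ is exactly what is needed to absorb the logarithmic growth of the upper limit $b^k$ when one uses only $\omega\le\|\omega\|_{L_\infty}$ beyond $u=1$. The condition $b>1$ is used only implicitly (through the $\log b>0$ factor and the fact that $b^k\to\infty$ so the splitting at $1$ is nontrivial, but the argument works verbatim for any $b>0$ after replacing $k\log b$ by its absolute value).
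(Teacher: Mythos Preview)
Your proof is correct and follows essentially the same strategy as the paper: verify the elementary properties via uniform convergence, then for the Dini integral interchange sum and integral, change variables $u=b^ks$, and split at $u=1$, using the Dini hypothesis on $[0,1]$ and the boundedness of $\omega$ beyond. The paper orders the steps slightly differently---it first splits the sum at $k_0=[-\log t/\log b]$ to obtain the pointwise bound $\tilde\omega(t)\le\sum_{k\le k_0}a^k\omega(b^kt)+Nt^\gamma$ and only then invokes Fubini---but the underlying computation is the same.
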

\begin{proof}
First we note that a bounded continuous increasing function on $\overline{\bR_+}$ must be uniformly continuous on $\overline{\bR_+}$. Because $a\in (0,1)$ and $b>1$, $\tilde \omega$ is a bounded continuous increasing function and $\tilde \omega(0)=0$. Let $t\in (0,1]$ and $k_0=[-\log t/\log b]$.
It follows from \eqref{eq4.22} that
$$
\tilde \omega(t)=\left(\sum_{k=0}^{k_0}+\sum_{k_0+1}^\infty\right)a^k\omega(b^k t)\le \sum_{k=0}^{k_0} a^k\omega(b^k t)+Nt^{\gamma}
$$
for some constant $N$ depending only on $a,b$, and $\sup \omega$, and $\gamma>0$ depending only on $a$ and $b$. By Fubini's theorem, we also get $\int_0^1 \tilde \omega(t)/t\,dt<\infty$. The lemma is proved.
\end{proof}

Throughout the paper we denote
\begin{equation*}
(f)_{\Omega} = \frac{1}{|\Omega|} \int_{\Omega} f(x) \, dx
= \dashint_{\Omega} f(x) \, dx,
\end{equation*}
where $|\Omega|$ is the
$d$-dimensional Lebesgue measure of $\Omega$.

\begin{lemma}
                                                \label{lem4.09}
Let  $\omega$ be a Dini function and $f$ be a bounded continuous function in $\bR^d$. Suppose that for any $x_0\in \bR$ and $l\in \bZ$ we have
$$
\dashint_{B_{2^{l}}(x_0)}\big|f-(f)_{B_{2^{l}}(x_0)}\big|\,dx\le \omega(2^l).
$$
Then for any $x,y\in\bR^d$,
\begin{equation}
                                    \label{eq11.27}
|f(x)-f(y)|\le N\int_0^{|x-y|}\omega(s)/s\,ds+N\omega(2|x-y|),
\end{equation}
where $N>0$ depends only on $d$. In particular, $f$ is a uniformly continuous function in $\bR^d$.
\end{lemma}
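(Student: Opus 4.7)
The plan is to run the standard Campanato-type telescoping argument on dyadic balls, combined with Lebesgue differentiation. Fix $x,y\in\bR^d$, set $r=|x-y|$, and choose the integer $l_0$ with $2^{l_0}\le r<2^{l_0+1}$. The target is to compare $f(x)$ and $f(y)$ by a chain of averages
$$
f(x)\longrightarrow (f)_{B_{2^{l_0}}(x)}\longrightarrow (f)_{B_{2^{l_0+1}}(y)}\longrightarrow (f)_{B_{2^{l_0}}(y)}\longrightarrow f(y).
$$

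For the two outer transitions, I would use a dyadic telescoping estimate on balls concentric at $x$ (and similarly at $y$). Since $B_{2^{l}}(x)\subset B_{2^{l+1}}(x)$ with volume ratio $2^d$, the hypothesis immediately yields
$$
\bigl|(f)_{B_{2^{l}}(x)}-(f)_{B_{2^{l+1}}(x)}\bigr|
\le 2^d\dashint_{B_{2^{l+1}}(x)}\!|f-(f)_{B_{2^{l+1}}(x)}|\,dz
\le 2^d\omega(2^{l+1}).
$$
Summing this bound over $l\le l_0-1$ and invoking the Lebesgue differentiation theorem (which applies pointwise because $f$ is continuous) gives
$$
\bigl|f(x)-(f)_{B_{2^{l_0}}(x)}\bigr|\le N\sum_{l=-\infty}^{l_0}\omega(2^{l}).
$$
Because $\omega$ is increasing, $\omega(2^l)\log 2\le \int_{2^l}^{2^{l+1}}\omega(s)/s\,ds$, so the sum is controlled by $N\int_0^{2^{l_0+1}}\omega(s)/s\,ds$, which in turn is bounded by $N\int_0^{|x-y|}\omega(s)/s\,ds+N\omega(2|x-y|)$ upon splitting the integral at $|x-y|$ and using monotonicity. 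The analogous estimate holds with $x$ replaced by $y$.

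For the middle transition, the choice of $l_0$ ensures $B_{2^{l_0}}(x)\subset B_{2^{l_0+1}}(y)$ (since $|x-y|\le 2^{l_0}$) and of course $B_{2^{l_0}}(y)\subset B_{2^{l_0+1}}(y)$. Each of the two averages on the smaller balls can therefore be compared to $(f)_{B_{2^{l_0+1}}(y)}$ by the same volume-doubling trick, yielding a bound of $2^d\omega(2^{l_0+1})\le 2^d\omega(2|x-y|)$. Adding the four contributions produces exactly \eqref{eq11.27}. Uniform continuity of $f$ then follows because both $\int_0^r\omega(s)/s\,ds$ and $\omega(2r)$ tend to zero as $r\to 0$, by the Dini condition and $\omega(0)=0$.

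There is no real obstacle here; the only delicate point is the bookkeeping when converting the dyadic sum $\sum\omega(2^l)$ into the integral $\int_0^{|x-y|}\omega(s)/s\,ds$ plus the residual $\omega(2|x-y|)$, and the geometric verification that the particular choice $2^{l_0}\le|x-y|<2^{l_0+1}$ makes all the inclusions $B_{2^{l_0}}(x),B_{2^{l_0}}(y)\subset B_{2^{l_0+1}}(y)$ hold so that the middle transition costs only $\omega(2|x-y|)$ rather than a larger quantity.
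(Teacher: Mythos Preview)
Your approach is essentially the same telescoping argument the paper uses, but there is a slip in the middle step. With your choice $2^{l_0}\le r=|x-y|<2^{l_0+1}$ you in fact have $|x-y|\ge 2^{l_0}$, not $|x-y|\le 2^{l_0}$ as you assert; consequently the inclusion $B_{2^{l_0}}(x)\subset B_{2^{l_0+1}}(y)$ fails in general (it requires $2^{l_0}+|x-y|\le 2^{l_0+1}$, i.e.\ $|x-y|\le 2^{l_0}$). So the comparison of $(f)_{B_{2^{l_0}}(x)}$ with $(f)_{B_{2^{l_0+1}}(y)}$ is not justified as written.

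The paper fixes this by centering the intermediate ball at the midpoint $z_0=(x+y)/2$: with $k$ chosen so that $2^{k+1}\le r<2^{k+2}$ one has $B_{2^k}(x)\cup B_{2^k}(y)\subset B_{2^{k+2}}(z_0)$, and since $2^{k+2}\le 2r$ this yields precisely the $\omega(2|x-y|)$ contribution. Your argument is repaired by the same device (or, equivalently, by enlarging the intermediate ball to $B_{2^{l_0+2}}$, though then the bound becomes $\omega(4|x-y|)$, which cannot in general be reduced to $N\omega(2|x-y|)$ without a doubling hypothesis on $\omega$). The outer telescoping steps and the conversion of $\sum_{l\le l_0}\omega(2^l)$ to $N\int_0^{|x-y|}\omega(s)/s\,ds+N\omega(2|x-y|)$ are correct and match the paper.
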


\begin{proof}
We follow an idea in \cite[Sect. 6 (iii)]{Sperner}, where the constant $N$ also depends on $\|f\|_{L_\infty}$ because a slightly different condition is imposed there. We give a detailed proof for the sake of completeness.

For any given different $x_0,y_0\in \bR^d$, denote $z_0=(x_0+y_0)/2$ and $r=|x_0-y_0|$. Let $k$ be the integer such that
\begin{equation}
								\label{eq0302_3z}
2^{k+1} \le r < 2^{k+2}.
\end{equation}
By the triangle inequality,
\begin{align}
                                    \label{eq11.47z}
|&f(x_0)-f(y_0)|\nonumber\\
&\le |f(x_0)-(f)_{B_{2^k}(x_0)}|+|f(y_0)-(f)_{B_{2^k}(y_0)}|+|(f)_{B_{2^k}(x_0)}-(f)_{B_{2^k}(y_0)}|\nonumber\\
&:=I_1+I_2+I_3.
\end{align}
First we bound $I_3$. Observe that from \eqref{eq0302_3z}, $B_{2^k}(x_0)\cup B_{2^k}(y_0)\subset B_{ 2^{k+2}}(z_0)$. By the triangle inequality,
\begin{align}
I_3&\le \dashint_{B_{2^k}(x_0)}\dashint_{B_{2^k}(y_0)}|f(x)-f(y)|\,dx\,dy\nonumber\\
&\le N \dashint_{B_{ 2^{k+2}}(z_0)}\dashint_{B_{2^{k+2}}(z_0)}|f(x)-f(y)|\,dx\,dy\nonumber\\
&\le N \dashint_{B_{2^{k+2}}(z_0)}|f(x)-(f)_{B_{2^{k+2}}(z_0)}|\,dx\nonumber\\
                                        \label{eq12.05}
&\le N\omega(2^{k+2})
\le N \omega(2r).
\end{align}
Next, because of the continuity of $f$, we have
$$
f(x_0)=\lim_{l\to -\infty}(f)_{B_{2^l}(x_0)},
$$
which together with the triangle inequality yields
\begin{align}
I_1&\le \sum_{l=-\infty}^k \big|(f)_{B_{2^{l-1}}(x_0)}-(f)_{B_{2^{l}}(x_0)}\big|\nonumber\\
                                    \nonumber
&\le N\sum_{l=-\infty}^k\omega(2^l)\le N\int_0^{r}\omega(s)/s\,ds.
\end{align}
Similarly, we have
\begin{equation}
                                    \label{eq12.18}
I_2\le N\int_0^{r}\omega(s)/s\,ds.
\end{equation}
Combining \eqref{eq11.47z} with \eqref{eq12.05}-\eqref{eq12.18} gives \eqref{eq11.27}. The lemma is proved.
\end{proof}

We will use the following elementary estimates involving H\"older semi-norms.

\begin{lemma}
                                        \label{lem1}
Let $\alpha\in (0,1]$, $\beta\in (0,\alpha)$, $h>0$, and $h\in\bR^d$ be a nonzero vector. Suppose that $f\in C^{\alpha}$ is a function in $\bR^d$. Define
$$
f_{\beta,h}(x)=|h|^{-\beta}(f(x+h)-f(x)).
$$
Then we have $f_{\beta,h}\in C^{\alpha-\beta}$ and
\begin{equation}
								\label{eq0302_4}
[f_{\beta,h}]_{C^{\alpha-\beta}}\le 2[f]_{C^\alpha}.	
\end{equation}
\end{lemma}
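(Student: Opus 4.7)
The plan is to bound $|f_{\beta,h}(x)-f_{\beta,h}(y)|$ in two complementary ways and then take whichever is smaller depending on the relative size of $|x-y|$ and $|h|$.

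First I would write
\[
f_{\beta,h}(x)-f_{\beta,h}(y)=|h|^{-\beta}\bigl[(f(x+h)-f(y+h))-(f(x)-f(y))\bigr],
\]
and observe that the bracket can be estimated in two ways: by grouping the two translates by $h$ together we get $|f(x+h)-f(y+h)|+|f(x)-f(y)|\le 2[f]_{C^\alpha}|x-y|^\alpha$, while by grouping the two evaluations at $x$ (resp.\ $y$) together we get $|f(x+h)-f(x)|+|f(y+h)-f(y)|\le 2[f]_{C^\alpha}|h|^\alpha$. Hence
\[
|f_{\beta,h}(x)-f_{\beta,h}(y)|\le 2[f]_{C^\alpha}\,|h|^{-\beta}\min\bigl(|x-y|^\alpha,\,|h|^\alpha\bigr).
\]

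Next I would split into two cases according to whether $|x-y|\le |h|$ or $|x-y|>|h|$. In the first case, use the $|x-y|^\alpha$ bound and write $|h|^{-\beta}|x-y|^\alpha=|x-y|^{\alpha-\beta}\cdot(|x-y|/|h|)^\beta\le |x-y|^{\alpha-\beta}$ since the last factor is at most one. In the second case, use the $|h|^\alpha$ bound and write $|h|^{-\beta}|h|^\alpha=|h|^{\alpha-\beta}\le |x-y|^{\alpha-\beta}$, which holds because $\alpha-\beta>0$. In either case we arrive at \eqref{eq0302_4}.

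The argument is essentially a one-line min/max trick; there is no real obstacle. The only thing to be careful about is that the exponent $\alpha-\beta$ remains in $(0,1]$, which is exactly what the hypothesis $\beta\in(0,\alpha)$ (with $\alpha\le 1$) guarantees, and that one never needs to invert the inequality $|x-y|\le |h|$ in a way that would produce a negative power. Boundedness of $f_{\beta,h}$ is not asserted in the statement (only the seminorm is controlled), so no separate $L_\infty$ bound is required.
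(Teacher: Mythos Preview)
Your proposal is correct and follows essentially the same approach as the paper: both arguments write the second difference in two ways, split into the cases $|x-y|\le |h|$ and $|x-y|>|h|$, and compare powers. Your packaging via the explicit $\min$ is slightly cleaner, but the substance is identical.
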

\begin{proof}

Note that
\begin{align*}
\left|f_{\beta,h}(x)-f_{\beta,h}(y)\right|
&\le |h|^{-\beta}\left|f(x+h)-f(y+h)\right|
+ |h|^{-\beta}\left|f(x)-f(y)\right|\\
&\le 2 |h|^{-\beta}|x-y|^\beta|x-y|^{\alpha-\beta} [f]_{C^{\alpha}}.
\end{align*}
Thus the inequality \eqref{eq0302_4} holds true if $|x-y|\le h$.
In case $|x-y| > h$, we proceed
\begin{align*}
\left|f_{\beta,h}(x)-f_{\beta,h}(y)\right|
&\le |h|^{-\beta}\left|f(x+h)-f(x)\right|
+ |h|^{-\beta}\left|f(y+h)-f(y)\right|\\
&=\left|\frac{f(x+h)-f(x)}{h^\alpha}\right|h^{\alpha-\beta}
+ \left|\frac{f(y+h)-f(y)}{h^\alpha}\right|h^{\alpha-\beta}\\
&\le 2 |x-y|^{\alpha-\beta} [f]_{C^{\alpha}}.
\end{align*}
Thus we again arrive at the inequality \eqref{eq0302_4}.
\end{proof}

\begin{lemma}
                                        \label{lem2}
Let $\alpha\in (0,1)$, $\beta \in (0,1)$, $K>0$ be constants, and $f$ be a bounded
function on $\bR$. For $h>0$, define
$$
f_{\beta,h}(x)=h^{-\beta}(f(x+h)-f(x)).
$$
Suppose that for any $h>0$, $f_{\beta,h}\in C^\alpha$ and $[f_{\beta,h}]_{C^\alpha}\le K$. Then

i) If $\alpha+\beta<1$, then we have $f\in C^{\alpha+\beta}$ and
$[f]_{C^{\alpha+\beta}}\le NK$, where $N$ depends only on $\alpha+\beta$.

ii) If $\alpha+\beta=1$, then for any $x,y\in \bR$ we have
\begin{equation}
								\label{eq0302_2}
|f(x+y)-f(x)|\le N\big|y\log|y|\big|K+N|y|\min\{\|f\|_{L_\infty},[f]_{C^{\gamma}}\},	
\end{equation}
where $\gamma\in (0,1)$ is arbitrary and $N=N(\gamma)>0$.
\end{lemma}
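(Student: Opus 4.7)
The hypothesis is really a Zygmund-type second-difference condition in disguise. I would begin with the identity
$$f(x+2h)-2f(x+h)+f(x)=h^\beta\bigl(f_{\beta,h}(x+h)-f_{\beta,h}(x)\bigr),$$
which together with $[f_{\beta,h}]_{C^\alpha}\le K$ yields $|f(x+2h)-2f(x+h)+f(x)|\le K h^{\alpha+\beta}$ for every $x\in\bR$ and every $h>0$. Fixing $x$ and setting $F(t):=f(x+t)-f(x)$, this reads $|F(2t)-2F(t)|\le K t^{\alpha+\beta}$ for $t>0$.

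The heart of the argument is a dyadic telescoping of $F$ at the scales $2^k y$ with $y>0$. Dividing the previous bound by $2^{k+1}$ at $t=2^k y$ gives
$$\left|\frac{F(2^{k+1}y)}{2^{k+1}}-\frac{F(2^k y)}{2^k}\right|\le \frac{K y^{\alpha+\beta}}{2}\cdot 2^{k(\alpha+\beta-1)},$$
and summing $k$ from $0$ to $n-1$ produces
$$\left|F(y)-\frac{F(2^n y)}{2^n}\right|\le \frac{K y^{\alpha+\beta}}{2}\sum_{k=0}^{n-1}2^{k(\alpha+\beta-1)}.$$
The crucial choice is to iterate \emph{upward} toward larger scales, so that the residual $F(2^n y)/2^n$ carries the damping factor $2^{-n}$ that annihilates the crude $\|f\|_{L_\infty}$ bound; iterating downward would send the constants the wrong way.

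For part (i), $\alpha+\beta-1<0$ makes the geometric sum bounded uniformly in $n$ by a constant $N=N(\alpha+\beta)$. Since $|F(2^n y)|\le 2\|f\|_{L_\infty}$, the residual vanishes as $n\to\infty$, so $|F(y)|\le NK y^{\alpha+\beta}$. Replacing $x$ by $x+y$ handles the case $y<0$, which gives $[f]_{C^{\alpha+\beta}}\le NK$.

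For part (ii), $\alpha+\beta=1$, the sum equals $n$, and one gets $|F(y)-F(2^n y)/2^n|\le (Ky/2)\,n$. I would then stop the iteration at the largest scale still of order one: choose $n$ with $2^n y\in[1,2)$, which is possible once $y\le 1/2$ (the regime $|y|>1/2$ being immediate from boundedness of $f$). This yields $n\le |\log_2 y|+1$ and $2^{-n}\le 2y$, so the residual can be controlled either by $4y\|f\|_{L_\infty}$ directly or, invoking $|F(2^n y)|\le [f]_{C^\gamma}(2^n y)^\gamma$ and $(2^n)^{\gamma-1}\le y^{1-\gamma}$, by a constant times $y\,[f]_{C^\gamma}$. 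Combining these with the $Kyn$ bound produces \eqref{eq0302_2}. The argument is essentially bookkeeping; the only conceptual point is the direction of iteration, and I anticipate no substantial obstacle beyond being careful that the limiting/choice of $n$ at the end of each part lines up with the desired right-hand sides.
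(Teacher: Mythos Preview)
Your proposal is correct and follows essentially the same approach as the paper: both extract the second-difference bound $|f(x+2h)-2f(x+h)+f(x)|\le Kh^{\alpha+\beta}$ from the hypothesis and telescope dyadically between the scale $y$ and a large scale of order one (the paper writes $w(\tau)=f(x+\tau)-f(x)$ and $\tau_0=2^k y$, you write $F(t)$ and $2^n y$). The only cosmetic differences are that in part~(i) the paper stops at a finite $k$ chosen so that $2^{-k}\|f\|_{L_\infty}\le K y^{\alpha+\beta}$ while you let $n\to\infty$, and that your treatment of $|y|>1/2$ by ``boundedness of $f$'' should also invoke the trivial bound $|f(x+y)-f(x)|\le|y|^\gamma[f]_{C^\gamma}\le 2^{1-\gamma}|y|\,[f]_{C^\gamma}$ to cover the $[f]_{C^\gamma}$ option in the $\min$.
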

\begin{proof}

We follow the steps in the proof of \cite[Lemma 5.6]{CaCa95}. See also \cite[Exercise 3.3.3]{Kr96}.
First, we consider the case $\alpha+\beta<1$. In this case we prove that
\begin{equation}
								\label{eq0301}
|f(x+y) - f(x)| \le N(\alpha,\beta) K y^{\alpha+\beta}
\end{equation}
for any $y > 0$.
Let $K_0 = \sup|f|$ and find a positive integer $k$ such that
\begin{equation}
								\label{eq0302_1}
2^{-k}K_0 \le K y^{\alpha+\beta}.	
\end{equation}
Set $\tau_0 = 2^k y$ and define
$$
w(\tau) = f(x+\tau) - f(x).
$$
Note that
\begin{align*}
&|w(\tau) - 2 w(\tau/2)|
= |f(x+\tau) - 2 f(x+\tau/2) + f(x)|\\
&\,= |\left(f(x+\tau) - f(x+\tau/2)\right) - \left(f(x+\tau/2) - f(x)\right)|\\
&\,=\left(\tau/2\right)^\beta
\left|f_{\beta,\tau/2}(x+\tau/2) - f_{\beta,\tau/2}(x)\right|
\le K \left(\tau/2\right)^{\alpha+\beta}.
\end{align*}
Thus we have
$$
\left| 2^{j-1}w(\tau_0/2^{j-1}) - 2^j w(\tau_0/2^j) \right|
\le K 2^{j-1} \left(\tau_0/2^j\right)^{\alpha+\beta}
= K2^{j\left(1-(\alpha+\beta)\right)-1} \tau_0^{\alpha+\beta},
$$
which implies
\begin{multline*}
\left|w(\tau_0) - 2^k w(y)\right|
\le \sum_{j=1}^k\left| 2^{j-1}w(\tau_0/2^{j-1}) - 2^j w(\tau_0/2^j) \right|\\
\le \tau_0^{\alpha+\beta} K\sum_{j=1}^k 2^{j\left(1-(\alpha+\beta)\right)-1}
\le N K 2^{k\left(1-(\alpha+\beta)\right)} \tau_0^{\alpha+\beta},
\end{multline*}
where $N=N(\alpha+\beta)$.
Note that $|w(\tau_0)|\le 2 K_0$. Then using the above inequalities and \eqref{eq0302_1}, we obtain
\begin{multline*}
|w(y)| \le 2^{-k}\left|2^k w(y) - w(\tau_0)\right|
+ 2^{-k}|w(\tau_0)|\\
\le N K 2^{-k(\alpha+\beta)} \tau_0^{\alpha+\beta} + 2K y^{\alpha+\beta}
= N K y^{\alpha+\beta},
\end{multline*}
where $N=N(\alpha+\beta)$. The inequality \eqref{eq0301} is proved.

To prove the case $\alpha+\beta=1$, we again consider $|f(x+y)-f(x)|$, where $y > 0$. If $y > 1/2$,
$$
\left|f(x+y)-f(x)\right| \le y^{\gamma}[f]_{C^{\gamma}}
\le  2^{1-\gamma}  y[f]_{C^{\gamma}}.
$$
Thus the inequality \eqref{eq0302_2} follows. For $0 < y < 1/2$, we find a positive integer $k$ satisfying
\begin{equation}
								\label{eq0302_3}
2^{-k-1} \le y < 2^{-k}.	
\end{equation}
By using the definitions of $\tau_0$ and $w$, and the same steps as above, we obtain
\begin{equation}
								\label{eq0302_8}
|w(y)| \le K k 2^{-k-1} \tau_0 + 2^{-k}|w(\tau_0)|.	
\end{equation}
Observe that from \eqref{eq0302_3}
$$
1/2 \le \tau_0<1,\quad k 2^{-k} \tau_0 = k y \le \frac{|\log y|}{\log 2} y,
$$
$$
2^{-k}|w(\tau_0)|
= 2^{-k}|f(x+\tau_0)-f(x)| \le 2^{-k}\tau_0^{\gamma}[f]_{C^{\gamma}}
\le 2^{1-\gamma}y [f]_{C^{\gamma}}.
$$
Upon combining these inequalities with \eqref{eq0302_8}, we finally obtain the inequality \eqref{eq0302_2}.
\end{proof}

\mysection{Proofs of Theorems \ref{mainth00} and \ref{mainth01}}
                                \label{sec3}
This section is devoted to the proofs of Theorems \ref{mainth00} and \ref{mainth01}.
Throughout the section, as in Theorems \ref{mainth00} and \ref{mainth01}, the kernel $K=K(y)$ in \eqref{eq23.22.24} satisfies the ellipticity condition \eqref{eq1211} and, if $\sigma=1$, the cancellation condition \eqref{eq21.47},
except in the following maximum principle for the operator $L$, where $K$ only satisfies the upper bound in \eqref{eq1211} and depends on $x$ as well.
Lemma \ref{lem2.3} may be found in the literature, but for completeness we give a short proof.

\begin{lemma}[Maximum principle]
                                    \label{lem2.3}
Let $0< \sigma < 2$ and $\lambda>0$.
Assume that
$$
0\le K(x,y)\le (2-\sigma)\Lambda |y|^{-d-\sigma}
$$
and $u\in C^2_{\text{loc}}\cap L_\infty$ satisfies
\begin{equation*}
Lu-\lambda u\ge 0\quad\text{in}\,\,\bR^d.
\end{equation*}
Then we have $u\le 0$ in $\bR^d$.
\end{lemma}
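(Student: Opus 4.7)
The plan is a standard perturbation argument adapted to the nonlocal setting. Suppose for contradiction that $M:=\sup_{\bR^d}u>0$. If $u$ attained its supremum at some point $x^*$, then $\nabla u(x^*)=0$ and $u(x^*+y)-u(x^*)\le 0$ for all $y$, so the non-negativity of $K$ would force $Lu(x^*)\le 0$, and the differential inequality would yield $\lambda u(x^*)\le 0$, contradicting $u(x^*)=M>0$. Since in general the supremum need not be attained, the main task is to localize.

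First I would fix $\delta\in(0,M/4)$, pick $x_0\in\bR^d$ with $u(x_0)>M-\delta/2$, and choose $\psi\in C_c^\infty(\bR^d)$ with $0\le\psi\le 1$, $\psi(0)=1$, and $\operatorname{supp}\psi\subset B_1$. Set $\tilde\psi_R(x):=\psi((x-x_0)/R)$ for $R\ge 1$, and define the perturbed function $w_R:=u+\delta\tilde\psi_R$. Outside $B_R(x_0)$ one has $w_R=u\le M$, while $w_R(x_0)=u(x_0)+\delta>M+\delta/2$; hence $w_R$ attains its global supremum at some point $x_R\in B_R(x_0)$. At $x_R$, the conditions $\nabla w_R(x_R)=0$ and $w_R(x_R+y)\le w_R(x_R)$ translate into $\nabla u(x_R)=-\delta\nabla\tilde\psi_R(x_R)$ and $u(x_R+y)-u(x_R)\le-\delta(\tilde\psi_R(x_R+y)-\tilde\psi_R(x_R))$. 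Substituting into the integral defining $L$ and using $K\ge 0$ gives the pointwise bound $Lu(x_R)\le -\delta L\tilde\psi_R(x_R)$.

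The main step is to estimate $|L\tilde\psi_R(x_R)|$ and show that it vanishes as $R\to\infty$. I would split the integral into $|y|\le R$ and $|y|>R$. On $|y|\le R$, Taylor's theorem together with $|\nabla\tilde\psi_R|\le C/R$ and $|D^2\tilde\psi_R|\le C/R^2$ bounds the integrand by $C|y|^2/R^2$ when the gradient correction is present, or by $C|y|/R$ otherwise. On $|y|>R$, one uses $|\tilde\psi_R(x_R+y)-\tilde\psi_R(x_R)|\le 2$, and the gradient term (when present for $\sigma>1$) contributes an integrable quantity since $\int_{|y|>R}|y|^{1-d-\sigma}\,dy<\infty$. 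Combined with the upper bound $K\le(2-\sigma)\Lambda|y|^{-d-\sigma}$, a direct calculation gives $|L\tilde\psi_R(x_R)|\le CR^{-\sigma}$ for $\sigma\ne 1$ and $|L\tilde\psi_R(x_R)|\le C(\log R)/R$ for $\sigma=1$, where $C$ depends only on $d,\Lambda,\sigma,\psi$.

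Finally, combining $\lambda u(x_R)\le Lu(x_R)\le -\delta L\tilde\psi_R(x_R)\le \delta|L\tilde\psi_R(x_R)|$ with the lower bound $u(x_R)\ge w_R(x_R)-\delta\ge w_R(x_0)-\delta> M-\delta/2$ yields $\lambda(M-\delta/2)\le C\delta R^{-\sigma}$ (resp.\ $C\delta(\log R)/R$ when $\sigma=1$). Sending $R\to\infty$ with $\delta$ fixed gives $M\le\delta/2$, and then $\delta\to 0$ yields $M\le 0$, a contradiction. The main obstacle is the uniform-in-$x_R$ smallness of $|L\tilde\psi_R|$, which is delicate because $x_R$ depends on $R$ and sits anywhere in $B_R(x_0)$; the estimate works because the bound on $|L\tilde\psi_R(x)|$ is actually uniform in $x\in\bR^d$, depending only on the scaling of $\tilde\psi_R$ and the pointwise upper bound on $K$.
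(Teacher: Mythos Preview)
Your proof is correct and shares the paper's core idea: perturb $u$ by a compactly supported bump so that the near-supremum becomes an attained global maximum, then read off a contradiction from the differential inequality at that point. The execution differs in one place. You scale the bump $\tilde\psi_R$ and send $R\to\infty$, which forces the case-by-case estimate $|L\tilde\psi_R|\le CR^{-\sigma}$ (or $C(\log R)/R$ when $\sigma=1$). The paper instead keeps a \emph{fixed} bump $\eta$ and exploits only that $|L\eta|$ is bounded: since $(L-\lambda)w(x_1)\ge 2\varepsilon L\eta(x_1-x_0)-2\varepsilon\lambda\eta(x_1-x_0)+\lambda a$ and $\lambda a>0$ is a fixed positive constant, taking $\varepsilon$ small already makes the right-hand side positive, contradicting $(L-\lambda)w(x_1)\le 0$. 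The paper's route is shorter and avoids any $\sigma$-dependent computation; yours is equally valid but does a bit of unnecessary work in the scaling step.
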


\begin{proof}
Assume that $\sup_{\bR^d} u = a > 0$.
Then there is a point $x_0 \in \bR^d$ such that $u(x_0) > a-\varepsilon$.
Let $\eta$ be an infinitely differentiable function defined on $\bR^d$ with compact support and $\eta(0) = 1$. Then the function
$$
w(x):= u(x) + 2 \varepsilon \eta(x - x_0) - a
$$
achieves a positive maximum at some point $x_1 \in \bR^d$.
Now we notice that
$$
(L-\lambda)w(x_1) \ge 2 \varepsilon L \eta(x_1 - x_0) + \lambda a > 0
$$
if $\varepsilon$ is sufficiently small.
This, however, contradicts the obvious fact that
$(L-\lambda)w(x_1) \le 0$.
Therefore, $\sup_{\bR^d} u \le 0$.\footnote{We thank an anonymous referee who pointed out to us this short proof.}
\end{proof}

Define $\omega(x) = 1/(1+|x|^{d+\sigma})$.
The following a priori $C^\alpha$ estimate is obtained in \cite[Corollary 4.3 and Remark 2.3]{DK11_01}. A part of the proof adapts an idea in \cite{Ba11}.

\begin{lemma}[H\"older estimate]
								\label{lem3}
Let $\sigma \in (0, 2)$, $\lambda\ge 0$, $f \in L_{\infty}(B_1)$, and
$u \in C^{2}_{\text{loc}}(B_1)\cap L_1(\bR^d,\omega)$
such that
$$
Lu-\lambda u  = f
$$		
in $B_1$.
Then for any $\alpha_0 \in (0, \min\{1,\sigma\})$,
we have
\begin{equation*}
[u]_{C^{\alpha_0}(B_{1/2})} \le N \|u\|_{L_1(\bR^d, \omega)} + N\osc_{B_1}f,
\end{equation*}
where $N=N(d,\nu,\Lambda,\sigma, \alpha_0)$ is uniformly bounded as $\sigma\to 2$.
\end{lemma}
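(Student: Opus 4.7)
I would prove this by combining a tail-splitting argument with a nonlocal Krylov--Safonov-type decay of oscillation, and then invoking the Campanato characterization of H\"older continuity via mean oscillation decay.

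First, since $L$ annihilates constants, replacing $u$ by $u - (u)_{B_1}$ does not change $[u]_{C^{\alpha_0}(B_{1/2})}$, and turns the equation into $Lu - \lambda u = f - \lambda(u)_{B_1}$; the constant term $\lambda(u)_{B_1}$ is absorbed on the right-hand side by $\|u\|_{L_1(\bR^d,\omega)}$. Next, pick a smooth cutoff $\eta$ supported in $B_{3/4}$ with $\eta \equiv 1$ on $B_{1/2}$, and decompose $u = u_1 + u_2$ with $u_1 = u\eta$ and $u_2 = u(1-\eta)$. For $x \in B_{1/4}$ the support of $u_2$ lies at distance at least $1/4$ from $x$, so the upper bound in \eqref{eq1211} together with the definition \eqref{eq23.22.24} and a direct computation yield
$$
\|Lu_2\|_{L_\infty(B_{1/4})} \le N(d,\Lambda,\sigma)\, \|u\|_{L_1(\bR^d,\omega)}.
$$
Thus in $B_{1/4}$ the function $u_1$ satisfies a nonlocal equation whose right-hand side is pointwise bounded by $\osc_{B_1} f + N\|u\|_{L_1(\bR^d,\omega)} + \lambda|u_1|$.

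The heart of the argument is a nonlocal growth lemma: for any $x_0 \in B_{1/2}$ and $0 < r \le 1/4$, after rescaling $u_r(x) = u(x_0 + rx)$ (whose operator still satisfies \eqref{eq1211} and, if relevant, \eqref{eq21.47}, with the same constants), one obtains a decay
$$
\osc_{B_{r/4}(x_0)} u \le \theta\, \osc_{B_r(x_0)} u + N r^{\alpha_0}\bigl(\|u\|_{L_1(\bR^d,\omega)} + \osc_{B_1} f\bigr)
$$
for some $\theta \in (0,1)$ and $N>0$ independent of $r$. I would prove this by constructing a barrier of the form $\phi(x) = (|x|^{-p} - 1)_+$ with a suitable exponent $p = p(\sigma,\nu,\Lambda)$, computing $L\phi$ explicitly using \eqref{eq1211}, and combining it with the maximum principle (Lemma \ref{lem2.3}) to obtain a measure estimate on level sets of $u_r$, in the spirit of the Krylov--Safonov argument adapted to the nonlocal setting (see also \cite{Ba11,CS09}). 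Iterating the above oscillation decay dyadically in $r$ produces
$$
\osc_{B_r(x_0)} u \le N r^{\alpha_0}\bigl(\|u\|_{L_1(\bR^d,\omega)} + \osc_{B_1} f\bigr),
$$
uniformly for $x_0 \in B_{1/2}$ and $r \in (0,1/4]$, and the desired H\"older estimate then follows from the standard Campanato characterization.

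The main obstacle is the growth lemma itself and the control of its constants as $\sigma \to 2$. The factor $(2-\sigma)$ in \eqref{eq1211} is critical here: it normalizes the operator so that $L$ converges quantitatively to a uniformly elliptic second-order operator in this limit, and barriers of the form $|x|^{-p}$ retain effective $L\phi$ estimates across all $\sigma \in (0,2)$, with constants that do not degenerate. The cancellation condition \eqref{eq21.47} plays its usual role in the borderline case $\sigma = 1$, ensuring that the principal value in \eqref{eq23.22.24} is well defined and that the kernel is rescaling-invariant, which is needed for the iteration.
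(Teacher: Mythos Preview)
The paper does not prove this lemma here; it is quoted from \cite[Corollary~4.3 and Remark~2.3]{DK11_01}, with a note that the argument there adapts an idea from \cite{Ba11}. So there is no in-paper proof to compare your outline against directly.

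That said, your oscillation-decay iteration has a genuine gap at the nonlocal tail. The displayed inequality
\[
\osc_{B_{r/4}(x_0)} u \le \theta\, \osc_{B_r(x_0)} u + N r^{\alpha_0}\bigl(\|u\|_{L_1(\bR^d,\omega)} + \osc_{B_1} f\bigr)
\]
is not what a single barrier/measure-estimate step at scale $r$ actually produces. Because $L$ is nonlocal, the operator applied to the rescaled function sees $u$ on all of $\bR^d$, and one pass of the growth lemma yields a tail term that depends on the oscillation of $u$ at \emph{every} scale larger than $r$, not on the fixed global quantity $\|u\|_{L_1(\bR^d,\omega)}$. Your cutoff $u=u_1+u_2$ controls the tail only once, at the initial scale; it does not help at the smaller scales that arise in the iteration, since after rescaling the part of $u_1$ lying in $B_{3/4}\setminus B_r(x_0)$ becomes a new tail whose size you have not bounded. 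The standard remedy (as in \cite{Si06, CS09}) is to carry the tail inductively: assume oscillation bounds of order $r_j^{\alpha_0}$ at all previous scales $r_0>r_1>\cdots>r_{k-1}$, use those to bound the nonlocal tail contribution at scale $r_k$, and close the induction by choosing $\alpha_0$ small relative to $\theta$ and $\sigma$. The $r^{\alpha_0}$ factor in your inequality is the \emph{conclusion} of that induction, not something available at each single step. This is precisely the nonlocal complication that distinguishes the argument from second-order Krylov--Safonov, and it cannot be glossed over.
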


We derive the following proposition from Lemma \ref{lem3}.

\begin{proposition}
                                            \label{cor1}
Let $\sigma \in (0, 2)$, $\lambda\ge 0$, $f \in L_{\infty}(B_1)$, and $u \in C^{2}_{\text{loc}}(B_1)\cap L_1(\bR^d,\omega)$ such that
$$
Lu-\lambda u  = f
$$		
in $B_1$.
Then for any $\alpha_0 \in (0, \min\{1,\sigma\})$,
we have
\begin{equation*}
[u]_{C^{\alpha_0}(B_{1/2})} \le N \sum_{k=1}^\infty 2^{-k\sigma}\dashint_{B_{2^k}}\big|u-(u)_{B_{2^k}}\big|\,dx + N\osc_{B_1}f,
\end{equation*}
where $N=N(d,\nu,\Lambda,\sigma, \alpha_0)$ is uniformly bounded as $\sigma\to 2$.
\end{proposition}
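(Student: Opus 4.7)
\medskip

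\noindent\textbf{Proof plan for Proposition \ref{cor1}.} The plan is to start from Lemma \ref{lem3} applied to $u$ shifted by a well-chosen constant, and then convert the weighted $L_1$ norm over $\bR^d$ into a sum of mean oscillations on dyadic balls by a telescoping argument.

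First, observe that $Lc = 0$ for any constant $c$, so for $c := (u)_{B_1}$ the function $v = u - c$ solves
$$
L v - \lambda v = f + \lambda c \quad \text{in } B_1,
$$
and $\osc_{B_1}(f + \lambda c) = \osc_{B_1} f$. Also, the H\"older semi-norm is invariant under subtracting constants. Applying Lemma \ref{lem3} to $v$ yields
\begin{equation}
\label{eq:planA}
[u]_{C^{\alpha_0}(B_{1/2})} \le N\|u - c\|_{L_1(\bR^d,\omega)} + N\osc_{B_1} f.
\end{equation}

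Next I would split the weighted integral into dyadic pieces:
$$
\|u - c\|_{L_1(\bR^d,\omega)}
\le \int_{B_1}|u - c|\,dx + \sum_{k=1}^{\infty}\int_{B_{2^k}\setminus B_{2^{k-1}}}\frac{|u - c|}{1+|x|^{d+\sigma}}\,dx.
$$
On the $k$-th annulus, $1+|x|^{d+\sigma}\gtrsim 2^{k(d+\sigma)}$ and $|B_{2^k}| \simeq 2^{kd}$, so the $k$-th piece is bounded by $N 2^{-k\sigma}\dashint_{B_{2^k}}|u - c|\,dx$; the $B_1$ term can be absorbed into the $k=1$ summand.

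The key step is to relate $\dashint_{B_{2^k}}|u - c|\,dx$ to the desired quantities. By the triangle inequality,
$$
\dashint_{B_{2^k}}|u - c|\,dx
\le \dashint_{B_{2^k}}|u - (u)_{B_{2^k}}|\,dx
+ |(u)_{B_{2^k}} - (u)_{B_1}|,
$$
and a telescoping argument together with the fact that $|B_{2^{j+1}}|/|B_{2^j}| = 2^d$ gives
$$
|(u)_{B_1} - (u)_{B_{2^k}}|
\le \sum_{j=0}^{k-1}|(u)_{B_{2^j}} - (u)_{B_{2^{j+1}}}|
\le N \sum_{j=1}^{k}\dashint_{B_{2^j}}|u - (u)_{B_{2^j}}|\,dx.
$$
Substituting back and exchanging the order of summation,
$$
\sum_{k=1}^{\infty} 2^{-k\sigma}\dashint_{B_{2^k}}|u - c|\,dx
\le N\sum_{j=1}^{\infty}\dashint_{B_{2^j}}|u - (u)_{B_{2^j}}|\,dx \sum_{k=j}^{\infty}2^{-k\sigma}
= \frac{N}{1-2^{-\sigma}} \sum_{j=1}^{\infty} 2^{-j\sigma}\dashint_{B_{2^j}}|u - (u)_{B_{2^j}}|\,dx.
$$
Combining with \eqref{eq:planA} yields the desired bound; since $(1-2^{-\sigma})^{-1}$ is bounded as $\sigma \to 2$, the overall constant inherits the uniformity from Lemma \ref{lem3}.

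The only delicate point is the passage from a fixed subtraction constant $c = (u)_{B_1}$ to the $k$-dependent natural averages $(u)_{B_{2^k}}$; the telescoping estimate handles this cleanly, and the geometric factor $2^{-k\sigma}$ absorbs the linear-in-$k$ growth of the telescoping sum after swapping summations. No further obstacle is expected.
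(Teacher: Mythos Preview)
Your argument is correct. The approach is essentially the same as the paper's---apply Lemma~\ref{lem3} to a suitably shifted version of $u$, then reduce the weighted $L_1$ norm to a sum of dyadic mean oscillations via telescoping and swapping the order of summation---but the execution differs in one organizational point worth noting.

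The paper does not subtract a single constant $c=(u)_{B_1}$. Instead it builds a piecewise function $v$ by subtracting $(u)_{B_{2^k}}$ on each annulus $B_{2^k}\setminus B_{2^{k-1}}$. This makes $\|v\|_{L_1(\bR^d,\omega)}$ immediately comparable to the oscillation sum without any telescoping, but the discontinuities of $v$ across the annuli introduce an extra source term $g$ in the equation $Lv-\lambda v=\tilde f$ on $B_1$, and the telescoping is then used to bound $\sup_{B_1}|g|$. Your version keeps $v=u-c$ globally smooth (so no $g$ term appears and $\osc_{B_1}\tilde f=\osc_{B_1}f$ directly), at the price of doing the telescoping inside the estimate for $\|u-c\|_{L_1(\bR^d,\omega)}$. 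Either way the same geometric series $\sum_{k\ge j}2^{-k\sigma}\simeq 2^{-j\sigma}$ absorbs the telescoping, and the uniformity of the constant as $\sigma\to 2$ is preserved. Your route is arguably a bit more elementary since it avoids introducing a discontinuous auxiliary function and the associated commutator-type term $g$.
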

Observe that as $u\in L_1(\bR^d,\omega)$, the summation above is convergent.
Indeed, for $k \ge 2$,
\begin{align*}
&\dashint_{B_{2^k}}\big|u-(u)_{B_{2^k}}\big|\,dx
\le N 2^{-kd}\int_{B_{2^k}}|u|\,dx\\
&= N 2^{-kd} \sum_{j=1}^{k-1} \int_{B_{2^{j+1}}\setminus B_{2^j}}|u|\,dx
+ N 2^{-kd} \int_{B_2} |u| \, dx\\
&\le N \sum_{j=1}^{k-1} 2^{-kd+j(d+\sigma)}\int_{B_{2^{j+1}}\setminus B_{2^j}}\frac{|u|}{1+|x|^{d+\sigma}}\,dx
+ N 2^{-kd} \int_{B_2} \frac{|u|}{1+|x|^{d+\sigma}}\, dx.
\end{align*}
Then
\begin{multline*}
\sum_{k=1}^\infty 2^{-k\sigma}\dashint_{B_{2^k}}\big|u-(u)_{B_{2^k}}\big|\,dx
\le N \sum_{j=1}^\infty\int_{B_{2^{j+1}}\setminus B_{2^j}}\frac{|u|}{1+|x|^{d+\sigma}}\,dx\\
+ N \int_{B_2} \frac{|u|}{1+|x|^{d+\sigma}}\,dx
\le N \|u\|_{L_1(\bR^d, \omega)},
\end{multline*}
where $N=N(d,\sigma)$.

\begin{proof}[Proof of Proposition \ref{cor1}]
We define a function $v$ in $\bR^d$ as follows:
\begin{align*}
v&=u-(u)_{B_2}\quad \text{in}\quad B_2,\\
v&=u-(u)_{B_{2^k}}\quad \text{in}\quad B_{2^k}\setminus B_{2^{k-1}},\,\,k=2,3,\ldots.
\end{align*}
From $u\in C_{\text{loc}}^2(B_1)\cap L_1(\bR^d,\omega)$, it is easily seen that
$v \in C^{2}_{\text{loc}}(B_1)\cap L_1(\bR^d,\omega)$.
Now a simple computation shows that $v$ satisfies
$$
Lv-\lambda v=\tilde f:=f+\lambda (u)_{B_2}-g\quad \text{in}\quad B_1,
$$
where in $B_1$
$$
g=\int_{\bR^d}\big(u(x+y) -v(x+y) -(u)_{B_2}\big)K(y)\,dy.
$$
By the definition of $v$ and \eqref{eq1211}, we have
\begin{align}
                                        \label{eq10.54}
\sup_{B_1}|g|&\le N\sum_{k=2}^\infty 2^{-k\sigma}\big|(u)_{B_{2^k}}-(u)_{B_2}\big|\nonumber\\
&= N\sum_{k=2}^\infty 2^{-k\sigma}\Big|\sum_{j=1}^{k-1}\left((u)_{B_{2^{j+1}}}-(u)_{B_{2^j}}\right)\Big|\nonumber\\
&\le N\sum_{j=2}^\infty \sum_{k=j}^\infty 2^{-k\sigma}\dashint_{B_{2^j}}\big|u-(u)_{B_{2^j}}\big|\,dx\nonumber\\
&\le N\sum_{k=2}^\infty 2^{-k\sigma}\dashint_{B_{2^k}}\big|u-(u)_{B_{2^k}}\big|\,dx.
\end{align}
Similarly, we estimate $\|v\|_{L_1(\bR^d, \omega)}$ by
\begin{align}
                                    \label{eq11.07}
\|v\|_{L_1(\bR^d, \omega)}&=\int_{B_2}|u-(u)_{B_2}|\omega\,dx
+\sum_{k=2}^\infty\int_{B_{2^k}\setminus B_{2^{k-1}}}\big|u-(u)_{B_{2^k}}\big|\omega\,dx\nonumber\\
&\le N\sum_{k=1}^\infty 2^{-k\sigma}\dashint_{B_{2^k}}\big|u-(u)_{B_{2^k}}\big|\,dx.
\end{align}
Therefore, by Lemma \ref{lem3} applied to $v$ and using \eqref{eq10.54}, \eqref{eq11.07}, we get
\begin{multline*}
[u]_{C^{\alpha_0}(B_{1/2})}=[v]_{C^{\alpha_0}(B_{1/2})} \le N \|v\|_{L_1(\bR^d, \omega)} + N\osc_{B_1}\tilde f\\
\le  N\sum_{k=1}^\infty 2^{-k\sigma}\dashint_{B_{2^k}}\big|u-(u)_{B_{2^k}}\big|\,dx + N\osc_{B_1}f.
\end{multline*}
The proposition is proved.
\end{proof}


We are now ready to complete the proof of Theorem \ref{mainth00}.

\begin{proof}[Proof of Theorem \ref{mainth00}]
By mollifications, we may assume $u,f\in C^\infty$  with bounded derivatives. Furthermore, the case $\lambda=0$ follows from the case $\lambda>0$ by taking the limit as $\lambda\searrow 0$. So in the sequel we assume $\lambda>0$. Now applying the maximum principle Lemma \ref{lem2.3} to $u-\lambda^{-1}\|f\|_{L_\infty}$, we get $u\le \lambda^{-1}\|f\|_{L_\infty}$ in $\bR^d$. Similarly, applying Lemma \ref{lem2.3} to $-u-\lambda^{-1}\|f\|_{L_\infty}$ yields $-u\le \lambda^{-1}\|f\|_{L_\infty}$ in $\bR^d$. Thus we obtain the $L_\infty$ estimate \eqref{eq07.09.53}.

We now prove the second assertion. Take $\alpha_0=\min\{1,\sigma\}/2>0$. Let $w$ be the unique $H_2^\sigma$ strong solution to the equation
$$
Lw-\lambda w=\big(f-(f)_{B_2}\big)\eta\quad \text{in}\quad\bR^d,
$$
where $\eta$ is an infinitely differentiable function satisfying
$$
0 \le \eta \le 1,
\quad
\eta = 1\quad\text{on}\,\,B_2,
\quad
\eta = 0\quad\text{outside}\,\,B_3.
$$
Since $\left(f - (f)_{B_2}\right)\eta \in C_0^{\infty}$, by the classical theory we have $w \in H_2^\sigma \cap C_{\text{loc}}^{\infty}$.
By the $L_2$ estimate (cf. \cite[Proposition 3.5]{DK11_01}), we have
\begin{equation*}
\|(-\Delta)^{\sigma/2}w\|_{L_2}+\lambda \|w\|_{L_2}\le N(d,\nu)\|\eta\big(f-(f)_{B_2}\big)\|_{L_2}\le N(d,\nu)\omega_f(2),
\end{equation*}
which implies for any $R>0$,
\begin{equation}
            \label{eq11.55z}
\dashint_{B_R}\left(\big|(-\Delta)^{\sigma/2}w\big|^2+\lambda^2 |w|^2\right)\,dx \le N(d,\nu)R^{-d}\big(\omega_f(2)\big)^2.
\end{equation}
Set $v:=u-w$. Then $v \in C_{\text{loc}}^\infty$ satisfies
\begin{equation}
                                            \label{eq12.33}
Lv-\lambda v=\tilde f:=f(1-\eta)+(f)_{B_2}\eta.
\end{equation}
For convenience of notation, we define
$$
u'=(-\Delta)^{\sigma/2}u,\quad v'=(-\Delta)^{\sigma/2}v,\quad
w'=(-\Delta)^{\sigma/2}w.
$$
Thanks to H\"older's inequality, we have $w,w'\in L_1(\bR^d,\omega)$.
Then using the fact that $u$ and its derivatives are bounded, we see that $v,v'\in L_1(\bR^d,\omega)$.

We take the fractional derivative $(-\Delta)^{\sigma/2}$ on both sides of \eqref{eq12.33} and obtain
$$
(L-\lambda)v'=(-\Delta)^{\sigma/2}\tilde f.
$$
Applying Proposition \ref{cor1} to the equation above gives
\begin{equation}
                                    \label{eq13.31}
[v']_{C^{\alpha_0}(B_{1/2})}
\le N \sum_{k=1}^\infty 2^{-k\sigma}\dashint_{B_{2^k}}\big|v'-(v')_{B_{2^k}}\big|\,dx + N\osc_{B_1}(-\Delta)^{\sigma/2}\tilde f.
\end{equation}
We estimate the second term on the right-hand side of \eqref{eq13.31} as follows:
\begin{align*}
\osc_{B_1}(-\Delta)^{\sigma/2}\tilde f&\le 2\sup_{B_1}\big|(-\Delta)^{\sigma/2}\tilde f\big|\\
&\le N\sup_{x\in B_1}\int_{\bR^d}|y|^{-d-\sigma}\big|\tilde f(x+y)-\tilde f(x)\big|\,dy.
\end{align*}
Since for any $x,y\in B_1$, $\tilde f(x+y)=\tilde f(x)=(f)_{B_2}$, we get
\begin{equation}
								\label{eq0302_7}
\osc_{B_1}(-\Delta)^{\sigma/2}\tilde f
\le N\sup_{x\in B_1}\int_{B_1^c}|y|^{-d-\sigma}\big|\tilde f(x+y)-(f)_{B_2}\big|\,dy.		 \end{equation}
To calculate the right-hand side of the above inequality, we observe that, for any $x \in B_1$,
\begin{align}
                                        \label{eq13.48z}
&\int_{B_1^c}|y|^{-d-\sigma}\big|\tilde f(x+y)-(f)_{B_2}\big|\,dy\nonumber\\
&= \int_{B_1^c}|y|^{-d-\sigma}\big|1-\eta(x+y)\big| \big|f(x+y)-(f)_{B_2}\big|\,dy\nonumber\\
&\le \int_{B_1^c}|y|^{-d-\sigma}\big|f(x+y)-(f)_{B_2}\big|\,dy\le N\sum_{k=1}^\infty 2^{-k\sigma}\omega_f(2^k).
\end{align}
Combining \eqref{eq13.31}, \eqref{eq0302_7}, and \eqref{eq13.48z} yields
\begin{equation}
                                    \label{eq13.31zz}
[v']_{C^{\alpha_0}(B_{1/2})}
\le N\tilde \omega_f(2)
+N \sum_{k=1}^\infty 2^{-k\sigma}\dashint_{B_{2^k}}\big|v'-(v')_{B_{2^k}}\big|\,dx,
\end{equation}
where
$$
\tilde \omega_f(\cdot):=\sum_{k=0}^\infty 2^{-k\sigma}\omega_f(2^k\cdot)
$$
is also a Dini function thanks to Lemma \ref{lem4.08}. Let $s\ge 1$ be an integer to be specified later. By the triangle inequality, \eqref{eq11.55z}, and \eqref{eq13.31zz},
\begin{align*}
&\dashint_{B_{2^{-s}}}\big|u'-(u')_{B_{2^{-s}}}\big|\,dx\\
&\le \dashint_{B_{2^{-s}}}\big|v'-(v')_{B_{2^{-s}}}\big|\,dx
+N2^{sd/2}\omega_f(2)\\
&\le N2^{-s\alpha_0}[v']_{C^{\alpha_0}(B_{1/2})}
+N2^{sd/2}\omega_f(2)\\
&\le N2^{-s\alpha_0}\sum_{k=1}^\infty 2^{-k\sigma}\dashint_{B_{2^k}}\big|v'-(v')_{B_{2^k}}\big|\,dx +N2^{sd/2}\tilde \omega_f(2).
\end{align*}
By using the triangle inequality and \eqref{eq11.55z} again, we obtain
\begin{align}
&\dashint_{B_{2^{-s}}}\big|u'-(u')_{B_{2^{-s}}}\big|\,dx\nonumber\\
                                            \label{eq13.07}
&\le N2^{-s\alpha_0}\sum_{k=1}^\infty 2^{-k\sigma}\dashint_{B_{2^k}}\big|u'-(u')_{B_{2^k}}\big|\,dx +N2^{sd/2}\tilde \omega_f(2).
\end{align}
For any integer $l$, denote
$$
M_l=\sup_{x_0\in \bR^d}\dashint_{B_{2^{l}}(x_0)}\big|u'-(u')_{B_{2^{l}}(x_0)}\big|\,dx,
$$
$$
\tilde M_l=\sum_{j=0}^\infty 2^{-j\sigma/2}M_{l+j},\quad
\tilde{\tilde \omega}_f(\cdot)=\sum_{j=0}^\infty 2^{-j\sigma/2}\tilde \omega_f(2^j\cdot).
$$
By Lemma \ref{lem4.08}, $\tilde{\tilde \omega}_f$ is a Dini function.
From \eqref{eq13.07}, a scaling with a shift of the coordinates gives, for any integer $l$,
\begin{equation*}
M_l\le N2^{sd/2}\tilde \omega_f(2^{1+s+l})
+N2^{-s\alpha_0}\sum_{k=1}^\infty 2^{-k\sigma}M_{k+s+l}.
\end{equation*}
Consequently, by a change of indices $j+k\to j,k\to k$,
\begin{align}
                        \label{eq13.13}
\tilde M_l&\le N2^{sd/2}\tilde{\tilde \omega}_f(2^{1+s+l})
+N2^{-s\alpha_0}\sum_{j=0}^\infty\sum_{k=1}^\infty 2^{-j\sigma/2-k\sigma}M_{k+s+l+j}\nonumber\\
&= N2^{sd/2}\tilde{\tilde \omega}_f(2^{1+s+l})
+N2^{-s\alpha_0}\sum_{j=1}^\infty 2^{-j\sigma/2}M_{s+l+j}\sum_{k=1}^j 2^{-k\sigma/2}\nonumber\\
&\le N2^{sd/2}\tilde{\tilde \omega}_f(2^{1+s+l})
+N2^{-s\alpha_0}\tilde M_{s+l},
\end{align}
where $N=N(d,\nu,\Lambda,\sigma)$.
We now fix $s$ sufficiently large such that $N2^{-s\alpha_0}\le 1/2$. Since $\tilde M_l,l\in \bZ$ is a bounded sequence, it then follows from \eqref{eq13.13} that for any integer $l$,
$$
\tilde M_l\le N\tilde{\tilde \omega}_f(2^{1+s+l})+\frac 1 2 \tilde M_{s+l}\le\ldots
\le N\sum_{j=1}^\infty 2^{-j}\tilde{\tilde \omega}_f(2^{1+js+l}),
$$
which together with the obvious inequality $M_l\le \tilde M_l$ implies
$$
M_l\le N\sum_{j=1}^\infty 2^{-j}\tilde{\tilde \omega}_f(2^{1+js+l}).
$$
Since $\sum_{j=1}^\infty 2^{-j}\tilde{\tilde \omega}_f(2^{1+js}\cdot)$ is also a Dini function by Lemma \ref{lem4.08}, it follows from Lemma \ref{lem4.09} that $u'$ is uniformly continuous in $\bR^d$. Thus we have obtained an a priori estimate of $\omega_{u'}$.

It remains to get an a priori estimate of $\|u'\|_{L_\infty}$. We take a nonnegative cutoff function $\eta_1\in C_0^\infty(B_2)$ with a unit integral. By the triangle inequality, for any $x_0\in \bR^d$,
\begin{multline*}
|u'(x_0)|\le |u'(x_0)-\eta_1*u'(x_0)|+|\eta_1*u'(x_0)|\\
\le \omega_{u'}(2)+|(-\Delta)^{\sigma/2} \eta_1 *u(x_0)|
\le \omega_{u'}(2)+N\|u\|_{L_\infty}.
\end{multline*}
By keeping track of the constants, one can see that the constants $N$ above are uniformly bounded as $\sigma\to 2$.
The theorem is proved.
\end{proof}


We finish this section by giving the proof of Theorem \ref{mainth01}.

\begin{proof}[Proof of Theorem \ref{mainth01}]
As in the proof of Theorem \ref{mainth00}, we may assume $u,f\in C^\infty$  with bounded derivatives and $\lambda>0$.

First we consider the case when $\alpha\in (0,\min\{1,\sigma\})$. Take $\alpha_0=(\alpha+\min\{1,\sigma\})/2>\alpha$. Our aim is to estimate
$$
\int_{B_r}
\big|u-(u)_{B_r}\big| \,dx,\quad
\int_{B_r}\big|(-\Delta)^{\sigma/2}u-\big((-\Delta)^{\sigma/2}u\big)_{B_r}\big|\,dx
$$
for $r>0$.

We define functions $w$ and $v$ in the same way as in the proof of Theorem \ref{mainth00}. Instead of \eqref{eq11.55z}, since $\|\eta \left( f - (f)_{B_2} \right) \|_{L_2} \le N [f]_{C^\alpha}$, now we have
\begin{equation}
            \label{eq11.55}
\dashint_{B_R}\left(\big|(-\Delta)^{\sigma/2}w\big|^2+\lambda^2 |w|^2\right)\,dx \le NR^{-d}[f]^2_{C^\alpha}.
\end{equation}
Since $\osc_{B_1}\tilde f=0$, it follows from Proposition \ref{cor1} that
\begin{equation}
                                        \label{eq13.30z}
[v]_{C^{\alpha_0}(B_{1/2})} \le N \sum_{k=1}^\infty 2^{-k\sigma}\dashint_{B_{2^k}}\big|v-(v)_{B_{2^k}}\big|\,dx.
\end{equation}
Let $r_0\in (0,1/2)$ be a constant to be specified later.
By the triangle inequality, \eqref{eq11.55}, and \eqref{eq13.30z}, we obtain
\begin{align*}
&\dashint_{B_{r_0}}\big|u-(u)_{B_{r_0}}\big| \,dx\\
&\le \dashint_{B_{r_0}}\big|v-(v)_{B_{r_0}}\big| \,dx+N\lambda^{-1}r_0^{-d/2}[f]_{C^\alpha}\\
&\le Nr_0^{\alpha_0}[v]_{C^{\alpha_0}(B_{1/2})}+N\lambda^{-1}r_0^{-d/2}[f]_{C^\alpha}\\
&\le Nr_0^{\alpha_0}\sum_{k=1}^\infty 2^{-k\sigma}\dashint_{B_{2^k}}\big|v-(v)_{B_{2^k}}\big|\,dx
+N\lambda^{-1}r_0^{-d/2}[f]_{C^\alpha}.
\end{align*}
By using the triangle inequality and \eqref{eq11.55} again, we get
\begin{align*}
&\dashint_{B_{r_0}}\big|u-(u)_{B_{r_0}}\big| \,dx\\
&\le Nr_0^{\alpha_0}\sum_{k=1}^\infty 2^{-k\sigma}\dashint_{B_{2^k}}\big|u-(u)_{B_{2^k}}\big|\,dx
+N\lambda^{-1}\big(r_0^{\alpha_0}+r_0^{-d/2}\big)[f]_{C^\alpha}\\
&\le Nr_0^{\alpha_0}\sum_{k=1}^\infty 2^{-k\sigma+k\alpha}[u]_{C^\alpha}
+N\lambda^{-1}\big(r_0^{\alpha_0}+r_0^{-d/2}\big)[f]_{C^\alpha}\\
&\le Nr_0^{\alpha_0}[u]_{C^\alpha}
+N\lambda^{-1}r_0^{-d/2}[f]_{C^\alpha},
\end{align*}
where in the last inequality we have used $\alpha<\sigma$.
For any $r>0$, by scaling $x\to xr/r_0$, we have
\begin{align*}
&r^{-\alpha}\dashint_{B_{r}}\big|u-(u)_{B_{r}}\big| \,dx\le N r_0^{\alpha_0-\alpha}[u]_{C^\alpha}
+N\lambda^{-1}r_0^{-\alpha-d/2}[f]_{C^\alpha}.
\end{align*}
Shifting the coordinates, we get for any $x_0\in \bR^d$ and $r>0$.
\begin{equation*}
r^{-\alpha}\dashint_{B_{r}(x_0)}\big|u-(u)_{B_{r}(x_0)}\big| \,dx\le N r_0^{\alpha_0-\alpha}[u]_{C^\alpha}
+N\lambda^{-1}r_0^{-\alpha-d/2}[f]_{C^\alpha}.
\end{equation*}
We take the supremum of the left-hand side above
with respect to $x_0\in \bR^d$ and $r > 0$ and then use Campanato's equivalent definition of H\"older norms to get
\begin{align*}
[u]_{C^\alpha}
\le N r_0^{\alpha_0-\alpha}[u]_{C^\alpha}
+N\lambda^{-1}r_0^{-\alpha-d/2}[f]_{C^\alpha}.
\end{align*}
Recall that $\alpha<\alpha_0$.
Upon taking $r_0$ sufficiently small such that $Nr_0^{\alpha_0-\alpha}\le 1/2$, we obtain
\begin{equation}
                                \label{eq14.48}
\lambda [u]_{C^\alpha}\le N[f]_{C^\alpha}.
\end{equation}

Next we estimate the H\"older norm of the fractional derivative $(-\Delta)^{\sigma/2}u$.
Instead of \eqref{eq13.48z}, for any $x \in B_1$ we have
\begin{align}
                                        \label{eq13.48}
&\int_{B_1^c}|y|^{-d-\sigma}\big|\tilde f(x+y)-(f)_{B_2}\big|\,dy\nonumber\\
&= \int_{B_1^c}|y|^{-d-\sigma}\big|1-\eta(x+y)\big| \big|f(x+y)-(f)_{B_2}\big|\,dy\nonumber\\
&\le N[f]_{C^\alpha}\int_{B_1^c}|y|^{-d-\sigma+\alpha}\,dy\le N[f]_{C^\alpha}.
\end{align}
In the last inequality above, we used the assumption that $\alpha<\sigma$.
Combining \eqref{eq13.31}, \eqref{eq0302_7}, and \eqref{eq13.48} yields
\begin{equation}
                                    \label{eq13.31z}
[v']_{C^{\alpha_0}(B_{1/2})}
\le N[f]_{C^\alpha}
+N \sum_{k=1}^\infty 2^{-k\sigma}\dashint_{B_{2^k}}\big|v'-(v')_{B_{2^k}}\big|\,dx.
\end{equation}
Now we argue as in the estimate of $[u]_{C^\alpha}$. By the triangle inequality, \eqref{eq11.55}, and \eqref{eq13.31z},
\begin{align*}
&\dashint_{B_{r_0}}\big|u'-(u')_{B_{r_0}}\big|\,dx\\
&\le \dashint_{B_{r_0}}\big|v'-(v')_{B_{r_0}}\big|\,dx
+Nr_0^{-d/2}[f]_{C^\alpha}\\
&\le Nr_0^{\alpha_0}[v']_{C^{\alpha_0}(B_{1/2})}
+Nr_0^{-d/2}[f]_{C^\alpha}\\
&\le Nr_0^{\alpha_0}\sum_{k=1}^\infty 2^{-k\sigma}\dashint_{B_{2^k}}\big|v'-(v')_{B_{2^k}}\big|\,dx +N(r_0^{\alpha_0}+r_0^{-d/2})[f]_{C^\alpha}.
\end{align*}
By using the triangle inequality and \eqref{eq11.55} again, we get
\begin{align*}
&\dashint_{B_{r_0}}\big|u'-(u')_{B_{r_0}}\big|\,dx\\
&\le Nr_0^{\alpha_0}\sum_{k=1}^\infty 2^{-k\sigma}\dashint_{B_{2^k}}\big|u'-(u')_{B_{2^k}}\big|\,dx
+N(r_0^{\alpha_0}+r_0^{-d/2})[f]_{C^\alpha}\\
&\le Nr_0^{\alpha_0}\sum_{k=1}^\infty 2^{-k\sigma+k\alpha}[u']_{C^\alpha}
+N\big(r_0^{\alpha_0}+r_0^{-d/2}\big)[f]_{C^\alpha}\\
&\le Nr_0^{\alpha_0}[u']_{C^\alpha}
+Nr_0^{-d/2}[f]_{C^\alpha}.
\end{align*}
For any $r>0$, by scaling $x\to xr/r_0$,
\begin{align*}
&r^{-\alpha}\dashint_{B_{r}}\big|u'-(u')_{B_{r}}\big| \,dx\le N r_0^{\alpha_0-\alpha}[u']_{C^\alpha}
+Nr_0^{-\alpha-d/2}[f]_{C^\alpha}.
\end{align*}
Shifting the coordinates, we get for any $x_0\in \bR^d$ and $r>0$.
\begin{align*}
&r^{-\alpha}\dashint_{B_{r}(x_0)}\big|u'-(u')_{B_{r}(x_0)}\big| \,dx\le N r_0^{\alpha_0-\alpha}[u']_{C^\alpha}
+Nr_0^{-\alpha-d/2}[f]_{C^\alpha}.
\end{align*}
We take the supremum of the left-hand side above
with respect to $x_0\in \bR^d$ and $r > 0$ and then use Campanato's equivalent definition of H\"older norms to get
\begin{align*}
[u']_{C^\alpha}
\le N r_0^{\alpha_0-\alpha}[u']_{C^\alpha}
+Nr_0^{-\alpha-d/2}[f]_{C^\alpha}.
\end{align*}
Upon taking $r_0$ sufficiently small such that $Nr_0^{\alpha_0-\alpha}\le 1/2$, we obtain
\begin{equation}
                                \label{eq14.48z}
[u']_{C^\alpha}\le N[f]_{C^\alpha}.
\end{equation}
Collecting \eqref{eq14.48} and \eqref{eq14.48z}, we reach \eqref{eq24.09.53} in the case $\alpha<\min\{1,\sigma\}$.

For general $\alpha\in (0,1]$, we fix $\beta \in (0,\alpha)$ such that $\alpha-\beta < \min\{1,\sigma\}$.
For any $h>0$ and unit vector $\xi \in \bR^d$, we have
$$
L u_{\beta,h} - \lambda u_{\beta,h} = f_{\beta,h},
$$
where
\begin{align*}
f_{\beta,h}(x) &= h^{-\beta}\left(f(x+h\xi)-f(x)\right),\\
u_{\beta,h}(x) &= h^{-\beta}\left(u(x+h\xi)-u(x)\right).
\end{align*}
By Lemma \ref{lem1}, $f_{\beta,h} \in C^{\alpha-\beta}$
and $[f_{\beta,h}]_{C^{\alpha-\beta}} \le 2[f]_{C^{\alpha}}$.
Then the Schauder estimate \eqref{eq24.09.53} proved above for the case $\alpha-\beta<\min\{1,\sigma\}$ gives
\begin{equation}
                                        \label{eq11.34}
[(-\Delta)^{\sigma/2}u_{\beta,h}]_{C^{\alpha-\beta}}+\lambda [u_{\beta,h}]_{C^{\alpha-\beta}}
\le N [f_{\beta,h}]_{C^{\alpha-\beta}}\le N[f]_{C^{\alpha}}.
\end{equation}
Since $u$ belongs to $C^\infty$ with bounded derivatives, $(-\Delta)^{\sigma/2}u$ is a bounded function in $\bR^d$.
Now let $y\in \bR^d$ be a vector in the $\xi$-direction.
For $\alpha\in (0,1)$, thanks to Lemma \ref{lem2} i) and \eqref{eq11.34}, we have
$$
|(-\Delta)^{\sigma/2}u(x+y)-(-\Delta)^{\sigma/2}u(x)|+\lambda |u(x+y)-u(x)|
\le  N|y|^\alpha [f]_{C^{\alpha}}.
$$
Similarly, for $\alpha = 1$, by Lemma \ref{lem2} ii) and \eqref{eq11.34} we have
\begin{multline*}
|(-\Delta)^{\sigma/2}u(x+y)-(-\Delta)^{\sigma/2}u(y)|+\lambda |u(x+y)-u(x)|\\
\le N\big|y \log |y|\big| [f]_{C^{0,1}}
+ N |y| \big([(-\Delta)^{\sigma/2}u]_{C^{1/2}}+\lambda [u]_{C^{1/2}}\big),
\end{multline*}
where, by \eqref{eq24.09.53} with $\alpha = 1/2$
$$
[(-\Delta)^{\sigma/2}u]_{C^{1/2}}+\lambda [u]_{C^{1/2}} \le N [f]_{C^{1/2}} \le N\|f\|_{C^{0,1}}.
$$
Note that for $|y| \le 1/2$, $|y| \le N \big|y \log|y| \big|$. Therefore, for $|y| \le 1/2$,
$$
|(-\Delta)^{\sigma/2}u(x+y)-(-\Delta)^{\sigma/2}u(y)|+\lambda |u(x+y)-u(x)|
\le N\big|y \log |y|\big|\|f\|_{C^{0,1}}.
$$
Because $\xi$ is an arbitrary unit vector in $\bR^d$, we finally conclude \eqref{eq24.09.53} and \eqref{eq24.09.53z}. As before, by keeping track of the constants, one can see that the constants $N$ above are uniformly bounded as $\sigma\to 2$. The theorem is proved.
\end{proof}

\begin{remark}
                                    \label{rm3.33}
We give a proof of the claim in Remark \ref{rm0}. In fact, we prove a more general result under a relaxed ellipticity condition. Instead of assuming \eqref{eq1211} for all $y\in \bR^d$, we only assume the lower bound
$$
(2-\sigma)\frac{\nu}{|y|^{d+\sigma}}
\le K(y)
$$
for any $y\in B_1$ and the upper bound
$$
K(y) \le (2-\sigma)\frac{\Lambda}{|y|^{d+\sigma}}
$$ for all $y\in \bR^d$. In particular, we allow $K$ to vanish outside $B_1$. We fix a constant $\lambda>0$.
The $L_\infty$ estimate \eqref{eq07.09.53} still holds in this case as the proof of the maximum principle Lemma \ref{lem2.3} does not use the lower bound of $K$.
Now let $L_1$ be the elliptic operator with kernel $K_1(y):=K(y)+I_{B_1^c}(2-\sigma)|y|^{-d-\sigma}$. Then $u$ satisfies
$$
L_1 u-\lambda u=\tilde f\quad \text{in}\,\,\bR^d,
$$
where $\tilde f=f+L_2 u$ and $L_2$ is the operator with kernel
$K_2(y)=I_{B_1^c}(2-\sigma)|y|^{-d-\sigma}$. Clearly, \eqref{eq1211} holds with $K_1$ in place of $K$ for appropriate constants $\nu$ and $\Lambda$. By Theorem \ref{mainth01} i), for any $\alpha\in (0,1)$ we have
\begin{equation}
                                \label{eq24.4.40}
[(-\Delta)^{\sigma/2}u]_{C^\alpha}+\lambda [u]_{C^\alpha}
\le N [\tilde f]_{C^\alpha}
\le N [f]_{C^\alpha}+N[L_2 u]_{C^\alpha}.
\end{equation}
From the choice of $K_2$, it is easily seen that when $\sigma\in (0,1]$,
$$
[L_2 u]_{C^\alpha}\le N[u]_{C^\alpha}.
$$
Therefore, the last term on the right-hand side of \eqref{eq24.4.40} can be absorbed to the left-hand side if $\lambda$ is sufficiently large. For general $\lambda>0$, by the interpolation inequality (see, for instance, Lemma 4 in \cite{MP92}),
\begin{equation}
                                        \label{eq24.5.07}
[u]_{C^\alpha}\le \varepsilon [(-\Delta)^{\sigma/2}u]_{C^\alpha}+N(\varepsilon)\|u\|_{L_\infty}
\end{equation}
for any $\varepsilon\in (0,1)$. Combining \eqref{eq24.4.40}, \eqref{eq24.5.07} with a sufficiently small $\varepsilon$, and \eqref{eq07.09.53} yield
\begin{equation}
                                \label{eq24.4.40z}
[(-\Delta)^{\sigma/2}u]_{C^\alpha}+\lambda [u]_{C^\alpha}
\le N [f]_{C^\alpha}+N\lambda^{-1}\|f\|_{L_\infty}.
\end{equation}
In the case when $\sigma\in (1,2)$, we have
$$
[L_2 u]_{C^\alpha}\le N([u]_{C^\alpha}+[Du]_{C^\alpha}),
$$
and the estimate \eqref{eq24.4.40z} follows in a similar way by using the interpolation inequality
\begin{equation*}
[u]_{C^\alpha}+[Du]_{C^\alpha}\le \varepsilon [(-\Delta)^{\sigma/2}u]_{C^\alpha}+N(\varepsilon)\|u\|_{L_\infty}.
\end{equation*}
Finally, as in the proof of Theorem \ref{mainth01}, we also get \eqref{eq24.09.53z} with a constant $N$ depending also on $\lambda$. The claim is proved.
\end{remark}

\mysection{Solvability in Lipschitz--Zygmund spaces}
                            \label{sec4}

As an application of the a priori estimates in Theorem \ref{mainth01}, in this section we prove a solvability result for the non-local equation \eqref{elliptic} in $\bR^d$, i.e., Theorem \ref{thm2}. We introduce a few more notation. For any function $u\in L_\infty(\bR^d)$, denote $U(x,y)$ to be the harmonic extension of $u$ to $\bR^{d+1}_+$:
$$
U(\cdot,y)=P(\cdot,y)*u(\cdot),\quad \text{for}\,\,y>0,
$$
where $P(\cdot,y)$ is the Poisson kernel on $\bR^{d+1}_+$.
For $\alpha>0$, let $k$ be the smallest integer greater than $\alpha$. We define the Lipschitz--Zygmund space by
$$
\Lambda^\alpha=\{u\in L_\infty(\bR^d)\,:\,\sup_{y>0}y^{k-\alpha}\|D^k_y U(\cdot,y)\|_{L_\infty}<\infty\},
$$
which is equipped with the norm
$$
\|u\|_{\Lambda^\alpha}:=\|u\|_{L_\infty}+\sup_{y>0}y^{k-\alpha}\|D^k_y U(\cdot,y)\|_{L_\infty}.
$$
We recall a few well known properties of the Lipschitz--Zygmund spaces; see, for instance, \cite[Chap. V]{St70} and \cite[Chap. VI]{St93}:
\begin{enumerate}
\item For any $\alpha_1>\alpha_2>0$, we have $\Lambda^{\alpha_1}\subsetneq \Lambda^{\alpha_2}$.
\item For any non-integer $\alpha>0$, $\Lambda^\alpha$ is equivalent to the H\"older space $C^\alpha$. On the other hand, for any positive integer $\alpha$, $C^{\alpha-1,1}\subsetneq \Lambda^\alpha$.
\item Let $\alpha>1$. Then $u\in \Lambda^\alpha$ if and only if $u\in L_\infty$ and $Du\in \Lambda^{\alpha-1}$. The norms $\|u\|_{\Lambda^\alpha}$ and $\|u\|_{L_\infty}+\|Du\|_{\Lambda^{\alpha-1}}$ are equivalent.
\item The norms $\|u\|_{\Lambda^\alpha}$ and
$$
\|u\|_{L_\infty}+\sup_{y>0}y^{l-\alpha}\|D^l_y U(\cdot,y)\|_{L_\infty}
$$
are equivalent for any integer $l$ greater than $\alpha$.
\item For any $\alpha\in (0,2)$, $\|u\|_{\Lambda^\alpha}$ is equivalent to the norm
$$
\|u\|_{L_\infty}+\sup_{|h|>0}|h|^{-\alpha}
\|u(\cdot+h)+u(\cdot-h)-2u(\cdot)\|_{L_\infty}.
$$
\item Let $P$ be a pseudo-differential operator of order $m$. Then $P$ is continuous from $\Lambda^{\alpha}$ to $\Lambda^{\alpha-m}$ provided that $\alpha>m$. In particular, the Bessel potential operator $(1-\Delta)^{-s/2},s\ge 0$ is an isomorphism from $\Lambda^\alpha$ to $\Lambda^{\alpha+s}$ for any $\alpha>0$.
\end{enumerate}

As before, throughout this section we assume $K(x,y)=K(y)$.
For the proof of Theorem \ref{thm2}, we need the inequality \eqref{eq22.36} below. For future reference, we present here a rather complete and generalized version of Lemmas \ref{lem1} and \ref{lem2}.

\begin{lemma}
                                    \label{lem3.2}
Let $\alpha>0$ and $\beta\in (0,1)$ be constants. Then $f\in \Lambda^{\alpha+\beta}$ if and only if $f\in L_\infty$ and $f_{\beta,h}\in \Lambda^{\alpha}$ for any $\bR^d\ni h\neq 0 $, where $f_{\beta,h}$ is defined in Lemma \ref{lem1}. If $\|f_{\beta,h}\|_{\Lambda^{\alpha}}\le K$ for any nonzero $h\in \bR^d$, then we have
\begin{equation}
                                    \label{eq22.34}
\|f\|_{\Lambda^{\alpha+\beta}}\le N(\|f\|_{L_\infty}+K).
\end{equation}
Moreover, for any $\|f\|_{\Lambda^{\alpha+\beta}}$ and nonzero $h\in \bR^d$, we have
\begin{equation}
                                            \label{eq22.36}
\|f_{\beta,h}\|_{\Lambda^{\alpha}}\le N\|f\|_{\Lambda^{\alpha+\beta}}.
\end{equation}
\end{lemma}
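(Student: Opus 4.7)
The plan is to establish both directions using the harmonic extension characterization of Lipschitz--Zygmund spaces (property (4) listed above) combined with the semigroup structure of the Poisson kernel.

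For \eqref{eq22.36}, I would assume $f\in\Lambda^{\alpha+\beta}$ and let $U(x,y)$ denote its harmonic extension, so that by linearity the extension of $f_{\beta,h}$ is $V(x,y):=|h|^{-\beta}(U(x+h,y)-U(x,y))$. Fix any integer $l>\alpha+\beta$; this is also $>\alpha$ so it governs both norms. To bound $y^{l-\alpha}\|D_y^{l}V(\cdot,y)\|_{L_\infty}$ I would split on whether $|h|\ge y$ or $|h|<y$. When $|h|\ge y$, the trivial estimate $|D_y^{l}V|\le 2|h|^{-\beta}\|D_y^{l}U(\cdot,y)\|_{L_\infty}$ combines with property~(4) and $y\le|h|$ to give the bound. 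When $|h|<y$, the mean value theorem gives $|D_y^{l}V|\le |h|^{1-\beta}\|\nabla_{x}D_y^{l}U(\cdot,y)\|_{L_\infty}$; since $D_y^{l}U$ is harmonic on $\bR^{d+1}_+$, the interior gradient estimate on the ball of radius $y/2$ around $(x,y)$ yields $\|\nabla_{x}D_y^{l}U(\cdot,y)\|_{L_\infty}\le Cy^{\alpha+\beta-l-1}\|f\|_{\Lambda^{\alpha+\beta}}$, and $|h|^{1-\beta}y^{\beta-1}\le1$ closes the estimate. The uniform bound $\|f_{\beta,h}\|_{L_\infty}\le N\|f\|_{\Lambda^{\alpha+\beta}}$ follows from a case split on $|h|\lessgtr 1$ using first-order difference bounds in $\Lambda^{\alpha+\beta}$, invoking the logarithmic refinement from Lemma~\ref{lem2}~ii in the borderline case $\alpha+\beta=1$.

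For \eqref{eq22.34}, I would assume $f\in L_\infty$ and $\|f_{\beta,h}\|_{\Lambda^{\alpha}}\le K$ for every nonzero $h$, and pick any integer $l>\alpha$. The hypothesis $y^{l-\alpha}\|D_y^{l}V(\cdot,y)\|_{L_\infty}\le K$ translates into the H\"older bound $[D_y^{l}U(\cdot,y)]_{C^\beta}\le Ky^{\alpha-l}$. The main step is to upgrade this into a sup bound on $D_y^{l+1}U(\cdot,y)$. For this I would use the semigroup identity
\[
D_y^{l+1}U(x,y)=Q_{y/2}\ast D_y^{l}U(\cdot,y/2)(x),\qquad Q_{s}:=D_{s}P_{s},
\]
which holds because $s\mapsto D_y^{l}U(x,y/2+s)$ is the bounded harmonic extension of $D_y^{l}U(\cdot,y/2)$. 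Since $\int Q_{s}\,dz=0$ and a scaling argument using $\beta<1$ gives $\int|Q_{s}(z)||z|^{\beta}\,dz\le Cs^{\beta-1}$, subtracting the center value yields
\[
|D_y^{l+1}U(x,y)|\le [D_y^{l}U(\cdot,y/2)]_{C^\beta}\int|Q_{y/2}(z)||z|^{\beta}\,dz\le CKy^{\alpha+\beta-l-1}.
\]
Since $l+1>\alpha+\beta$ is an integer, property~(4) applied with this choice then gives $\|f\|_{\Lambda^{\alpha+\beta}}\le N(\|f\|_{L_\infty}+K)$.

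The main obstacle is precisely the passage from a H\"older bound on $D_y^{l}U(\cdot,y)$ to a sup bound on $D_y^{l+1}U(\cdot,y)$ in the second direction; the semigroup/mean-zero trick is what is needed to turn a uniform Zygmund bound on the family $\{f_{\beta,h}\}$ back into a Zygmund bound on $f$ with constant independent of $h$. Everything else reduces to routine case analysis, interior regularity for harmonic functions in $\bR^{d+1}_+$, and the equivalence of the various characterizations of the $\Lambda^{\alpha}$-norm using different integer orders of $y$-derivatives of the harmonic extension.
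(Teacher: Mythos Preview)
Your approach is correct and genuinely different from the paper's. The paper reduces to $\alpha\in(0,1]$ via Property~(3) and then works with the second-difference characterization (Property~(5)). For \eqref{eq22.34} it uses the identity
\[
|h|^{-(\alpha+\beta)}\|(T_h-1)^2f\|_{L_\infty}=|h|^{-\alpha}\|(T_h-1)f_{\beta,h}\|_{L_\infty},
\]
which settles $\alpha\in(0,1)$ immediately, while the endpoint $\alpha=1$ requires a dyadic iteration built on the algebraic identity $T_h-1=\tfrac12\big((T_{2h}-1)-(T_h-1)^2\big)$. For \eqref{eq22.36} the paper bounds $(T_h-1)(T_y-1)^2f$ by splitting on $|h|\gtrless|y|$ and, when $|h|<|y|$, expanding $T_h-1$ dyadically via repeated use of the same identity.

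Your harmonic-extension route (Property~(4)) avoids the finite-difference combinatorics entirely. The case split $|h|\gtrless y$ together with the interior gradient estimate for $D_y^{l}U$ handles \eqref{eq22.36} cleanly, and the semigroup/mean-zero trick for $Q_s=D_sP_s$ converts the uniform $C^\beta$-bound on $D_y^{l}U(\cdot,y)$ into a sup bound on $D_y^{l+1}U$, giving \eqref{eq22.34}. The main advantage of your method is that it treats all $\alpha>0$ uniformly, with no separate endpoint argument at $\alpha=1$; the paper's approach, by contrast, is more elementary in that it uses no PDE input at all, only shift-operator algebra.

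One small simplification: for the bound $\|f_{\beta,h}\|_{L_\infty}\le N\|f\|_{\Lambda^{\alpha+\beta}}$ you do not need Lemma~\ref{lem2}~ii or any logarithmic refinement. Since $\beta<1$ and $\alpha>0$, Property~(1) gives $\Lambda^{\alpha+\beta}\hookrightarrow\Lambda^\beta=C^\beta$, so directly $\|f_{\beta,h}\|_{L_\infty}\le[f]_{C^\beta}\le N\|f\|_{\Lambda^{\alpha+\beta}}$. This is exactly how the paper handles that term.
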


\begin{proof}
It suffices to show \eqref{eq22.34} and \eqref{eq22.36}. Thanks to Property (3), we may assume $\alpha\in (0,1]$, so that $\alpha+\beta<2$. The estimate \eqref{eq22.34} for $\alpha \in (0,1)$ follows immediately from Property (5) and
\begin{equation}
								\label{eq22.35}
|h|^{-(\alpha+\beta)}
\|f(\cdot+h)+f(\cdot-h)-2f(\cdot)\|_{L_\infty}=|h|^{-\alpha}
\|f_{\beta,h}(\cdot+h)-f_{\beta,h}(\cdot)\|_{L_\infty}.
\end{equation}

When $\alpha = 1$, the right-hand side of the above inequality is bounded by $4 K$ if $|h| \ge 1/2$.
For $|h| < 1/2$, take a positive integer $k$ such that $2^{-k-1} \le |h| < 2^{-k}$.
For any nonzero $y\in \bR^d$, set $T_y$ to be the shift operator $f(\cdot)\to f(\cdot+y)$.
We see that
\begin{equation}
								\label{eq0319_02}
(T_h-1)^2 = \frac 1 4 (T_{2h}-1)^2 - \frac 1 2 (T_{2h}-1)(T_h-1)^2 + \frac 1 4 (T_h-1)^4,	
\end{equation}
which follows by taking the square of the obvious identity
\begin{equation}
								\label{eq0319_01}
T_h-1=\frac 1 2\big(T_{2h}-1-(T_h-1)^2\big).								
\end{equation}
Upon setting $I_j := |2^j h|^{-\beta-1} | (T_{2^j h} - 1)^2 f|$ and using \eqref{eq0319_02}, we obtain
$$
I_j \le 2^{\beta-1}I_{j+1} + N K,
\quad
j = 1, 2, \cdots.
$$
This implies that $I_0 \le 2^{(\beta-1)(k-1)} I_k + N(\beta) K$.
Note that $2^{(\beta-1)(k-1)} \le 1$ and
$$
I_k \le \frac{2}{2^k |h|} \|f_{\beta, 2^kh}\|_{L_\infty}
\le 4 K,
$$
where the second inequality is due to the choice of $k$.
Also note that the left-hand side of \eqref{eq22.35} with $\alpha = 1$ is the sup of $I_0$ with respect to $x \in \bR^d$.
Therefore, the inequality \eqref{eq22.34} also follows when $\alpha = 1$.

It remains to prove \eqref{eq22.36}.
Due to Properties (2) and (1),
$$
\|f_{\beta,h}\|_{L_\infty}\le [f]_{C^\beta}\le N\|f\|_{\Lambda^\beta}\le N\|f\|_{\Lambda^{\alpha+\beta}}.
$$
Thus by Property (5), to prove \eqref{eq22.36}, it suffices to show that
\begin{equation}
                                \label{eq22.44}
|(T_h-1)(T_y-1)^2 f|\le N|h|^\beta |y|^\alpha\|f\|_{\Lambda^{\alpha+\beta}}
\end{equation}
for any nonzero $h,y\in \bR^d$. In the case $|h|\ge |y|$, by Property (5),
\begin{multline*}
|(T_h-1)(T_y-1)^2 f|\le 2\|(T_y-1)^2 f\|_{L_\infty}\\
\le N|y|^{\alpha+\beta}\|f\|_{\Lambda^{\alpha+\beta}}\le N|h|^\beta |y|^\alpha\|f\|_{\Lambda^{\alpha+\beta}}.
\end{multline*}
In the case $|h|<|y|$, let $k$ be a positive integer such that $2^{k-1}|h|<|y|\le 2^k |h|$. By repeatedly using the identity \eqref{eq0319_01},
we get
$$
T_h-1=-\sum_{j=0}^{k-1}{2^{-j-1}} (T_{2^jh}-1)^2+2^{-k}(T_{2^kh}-1).
$$
Therefore, by Property (5) again,
\begin{align*}
&|(T_h-1)(T_y-1)^2 f|\\
&\le \sum_{j=0}^{k-1}{2^{-j-1}} |(T_y-1)^2(T_{2^jh}-1)^2 f|+2^{-k}|(T_{2^kh}-1)(T_y-1)^2 f|\\
&\le N\Big(\sum_{j=0}^{k-1}{2^{-j-1}} 2^{j(\alpha+\beta)}|h|^{\alpha+\beta}+2^{-k}|y|^{\alpha+\beta}\Big)
\|f\|_{\Lambda^{\alpha+\beta}},
\end{align*}
which is less than the right-hand side of \eqref{eq22.44} by the choice of $k$. The lemma is proved.
\end{proof}


Denote $\cF^{-1}$ to be the operator of the inverse Fourier transform.
\begin{lemma}
                                    \label{lem3.3}
Let $G_{\beta}=\cF^{-1}\big(|\xi|^{\beta}+\lambda\big)^{-1}$ be the fundamental solution (Green's function) of the operator  $(-\Delta)^{\beta/2}+\lambda$. Then we have $G_\beta>0$ and $\|G_\beta\|_{L_1}=1/\lambda$.
\end{lemma}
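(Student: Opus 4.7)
The plan is to use the subordination identity
\[
\frac{1}{|\xi|^\beta + \lambda} = \int_0^\infty e^{-t(|\xi|^\beta + \lambda)}\,dt
= \int_0^\infty e^{-t\lambda}\, e^{-t|\xi|^\beta}\,dt,
\]
which converts the Green's function into an integral average of fractional heat kernels. Taking the inverse Fourier transform termwise (justified by absolute convergence once positivity is in hand, or in the distributional sense beforehand) I would write
\[
G_\beta(x) = \int_0^\infty e^{-t\lambda}\, p_t^{(\beta)}(x)\,dt,
\qquad
p_t^{(\beta)} := \cF^{-1}\bigl(e^{-t|\xi|^\beta}\bigr).
\]
Then $G_\beta>0$ and $\|G_\beta\|_{L_1}=1/\lambda$ are reduced to two standard facts about the symmetric $\beta$-stable densities $p_t^{(\beta)}$: namely $p_t^{(\beta)}>0$ and $\int_{\bR^d} p_t^{(\beta)}(x)\,dx = 1$.

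For positivity of $p_t^{(\beta)}$, the cleanest argument is Bochner subordination: since $\beta\in(0,2)$, write $|\xi|^\beta = (|\xi|^2)^{\beta/2}$ and use the $(\beta/2)$-stable subordinator density $\eta_t^{(\beta/2)}(s)\ge 0$, giving
\[
e^{-t|\xi|^\beta} = \int_0^\infty e^{-s|\xi|^2}\eta_t^{(\beta/2)}(s)\,ds.
\]
Inverting the Fourier transform yields the manifestly positive representation
\[
p_t^{(\beta)}(x) = \int_0^\infty (4\pi s)^{-d/2} e^{-|x|^2/(4s)}\,\eta_t^{(\beta/2)}(s)\,ds > 0.
\]
(Alternatively one can quote the classical fact that the symmetric $\beta$-stable law is a probability density.) Moreover, $\int p_t^{(\beta)}(x)\,dx$ equals the Fourier transform of $p_t^{(\beta)}$ at $\xi=0$, which is $e^{-t\cdot 0}=1$.

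Combining these, $G_\beta>0$ follows immediately from the integral representation, and by Tonelli's theorem (all integrands nonnegative)
\[
\|G_\beta\|_{L_1} = \int_0^\infty e^{-t\lambda}\int_{\bR^d} p_t^{(\beta)}(x)\,dx\,dt
= \int_0^\infty e^{-t\lambda}\,dt = \frac{1}{\lambda}.
\]
The only mild subtlety is justifying the interchange between inverse Fourier transform and the $t$-integral at the start; but since the right-hand side $\int_0^\infty e^{-t\lambda}p_t^{(\beta)}(\cdot)\,dt$ is a well-defined nonnegative $L_1$ function whose Fourier transform (computed by Tonelli) equals $(|\xi|^\beta+\lambda)^{-1}$, it coincides with $G_\beta$ by uniqueness of the Fourier transform on $\cS'$. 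No serious obstacle is expected.
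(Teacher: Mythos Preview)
Your proof is correct and follows essentially the same approach as the paper: both use the identity $(|\xi|^\beta+\lambda)^{-1}=\int_0^\infty e^{-\lambda t}e^{-t|\xi|^\beta}\,dt$, pass the inverse Fourier transform inside, and then invoke positivity and unit mass of the stable heat kernel $\cF^{-1}(e^{-t|\xi|^\beta})$ together with Fubini/Tonelli. The only difference is that the paper simply quotes these properties of the stable kernel as ``well known,'' whereas you supply the Bochner subordination argument and a justification for the interchange of Fourier transform and $t$-integral.
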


\begin{proof}
From
$$
\big(|\xi|^{\beta}+\lambda\big)^{-1}=\int_0^\infty e^{-\lambda t-|\xi|^\beta t}\,dt,
$$
we get
$$
G_\beta=\int_0^\infty e^{-\lambda t}\cF^{-1} \big(e^{-|\xi|^\beta t}\big)\,dt.
$$
It is well known that
$$
\cF^{-1} \big(e^{-|\xi|^\beta t}\big)>0,\quad \big\|\cF^{-1} \big(e^{-|\xi|^\beta t}\big)\big\|_{L_1}=1.
$$
Therefore, the conclusion of the lemma follows from Fubini's theorem.
\end{proof}

We will use the following continuity estimates of $(-\Delta)^{\beta/2}$ and $L$.

\begin{lemma}
                                    \label{lem3.4}
For any $\alpha,\beta>0$, the norms $\|u\|_{\Lambda^{\alpha+\beta}}$ and
$$
\|u\|_{\Lambda^\alpha}+\|(-\Delta)^{\beta/2}u\|_{\Lambda^\alpha}
$$
are equivalent. In particular, $(-\Delta)^{\beta/2}+\lambda$ is a continuous operator from $\Lambda^{\alpha+\beta}$ to $\Lambda^{\alpha}$.
\end{lemma}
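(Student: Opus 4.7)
The ``in particular'' assertion follows from the norm equivalence by the triangle inequality: $\|[(-\Delta)^{\beta/2}+\lambda]u\|_{\Lambda^\alpha}\le\|(-\Delta)^{\beta/2}u\|_{\Lambda^\alpha}+\lambda\|u\|_{\Lambda^\alpha}\le N\|u\|_{\Lambda^{\alpha+\beta}}$. The equivalence itself rests on one Fourier-side identity. Setting $U(\cdot,y)=P_y*u$, so that $\widehat U(\xi,y)=\hat u(\xi)e^{-y|\xi|}$, the elementary computation $\int_0^\infty t^{k-\gamma-1}|\xi|^k e^{-(y+t)|\xi|}\,dt=\Gamma(k-\gamma)|\xi|^\gamma e^{-y|\xi|}$ (valid for $\gamma>0$ and any integer $k>\gamma$) yields, after multiplying by $\hat u(\xi)$ and inverting Fourier,
$$
(-\Delta)^{\gamma/2}U(x,y)=\frac{(-1)^k}{\Gamma(k-\gamma)}\int_0^\infty t^{k-\gamma-1}\partial_s^k U(x,s)\big|_{s=y+t}\,dt,\qquad(\star)
$$
which is absolutely convergent for $u\in L_\infty$ and $y>0$ thanks to the scaling bound $\|\partial_s^k P_s\|_{L_1}\le N_k s^{-k}$.

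\emph{Upper bound} $\|u\|_{\Lambda^\alpha}+\|(-\Delta)^{\beta/2}u\|_{\Lambda^\alpha}\le N\|u\|_{\Lambda^{\alpha+\beta}}$. The first summand is controlled by Property (1). For the second, let $V(\cdot,y):=(-\Delta)^{\beta/2}U(\cdot,y)$, which is the Poisson extension of $(-\Delta)^{\beta/2}u$; fix integers $m>\alpha$ and $k>\beta$, and put $l:=k+m>\alpha+\beta$. Differentiating $(\star)$ with $\gamma=\beta$ exactly $m$ times in $y$, inserting the Poisson bound $\|\partial_s^l U(\cdot,s)\|_{L_\infty}\le N\|u\|_{\Lambda^{\alpha+\beta}}\,s^{\alpha+\beta-l}$ from Property (4), and substituting $t=ys$ converts the integral to a Beta integral which converges iff $k>\beta$ and $m>\alpha$, producing $\|\partial_y^m V(\cdot,y)\|_{L_\infty}\le N\|u\|_{\Lambda^{\alpha+\beta}}\,y^{\alpha-m}$. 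A parallel estimate at $y=0$, splitting the $t$-integral at $t=1$ and using $\|\partial_s^k U(\cdot,s)\|_{L_\infty}\le Ns^{-k}\|u\|_{L_\infty}$ on $s\ge1$, yields $\|(-\Delta)^{\beta/2}u\|_{L_\infty}\le N\|u\|_{\Lambda^{\alpha+\beta}}$; Property (4) then closes this direction.

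\emph{Lower bound} $\|u\|_{\Lambda^{\alpha+\beta}}\le N(\|u\|_{\Lambda^\alpha}+\|(-\Delta)^{\beta/2}u\|_{\Lambda^\alpha})$. Fix an integer $l>\alpha+\beta$. Since $\widehat{\partial_y^l U}=(-1)^l|\xi|^{l-\beta}\widehat V$ on the Fourier side, $\partial_y^l U(\cdot,y)=(-1)^l(-\Delta)^{(l-\beta)/2}V(\cdot,y)$. Apply $(\star)$ to $V$ in place of $U$ with $\gamma=l-\beta>\alpha$ and any integer $k'>\max(l-\beta,\alpha)$; combined with the $\Lambda^\alpha$-Poisson bound $\|\partial_s^{k'}V(\cdot,s)\|_{L_\infty}\le N\|(-\Delta)^{\beta/2}u\|_{\Lambda^\alpha}\,s^{\alpha-k'}$ and the substitution $t=ys$, the resulting Beta integral converges (thanks to $k'>l-\beta$ and $l>\alpha+\beta$), giving
$$
\|\partial_y^l U(\cdot,y)\|_{L_\infty}\le N\|(-\Delta)^{\beta/2}u\|_{\Lambda^\alpha}\,y^{\alpha+\beta-l}.
$$
Together with $\|u\|_{L_\infty}\le\|u\|_{\Lambda^\alpha}$, Property (4) completes the equivalence.

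\emph{Main obstacle.} The one genuinely technical point is rigorously justifying $(\star)$ and its $y$-differentiated forms in physical space---chiefly the exchange of $\partial_y^m$ with the $t$-integral and convergence at the endpoints $t=0$ and $t=\infty$. I would handle this by first mollifying, $u_\varepsilon=u*\eta_\varepsilon$, for which all manipulations are classical, and then passing to the limit $\varepsilon\to 0$ using the uniform estimates $\|\partial_s^k P_s\|_{L_1}\le N_k s^{-k}$ together with the sharp Poisson bounds from Property (4) that guarantee integrability at both ends of the $t$-interval.
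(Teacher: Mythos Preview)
Your argument is correct and complete in outline, but it takes a genuinely different route from the paper's. The paper's proof rests on the \emph{semigroup splitting} $U(\cdot,y)=U(\cdot,y/2)*P(\cdot,y/2)$ together with the kernel bound $\|(-\Delta)^{\gamma/2}P(\cdot,y)\|_{L_1}\le N y^{-\gamma}$; Young's inequality then transfers $\beta$ units of smoothness between the two factors in one line, and the $L_\infty$ bound on $(-\Delta)^{\beta/2}u$ is obtained by an absorption trick (comparing $(-\Delta)^{\beta/2}u$ with $(-\Delta)^{\beta/2}U(\cdot,y)$ for small $y$). Your approach instead expresses $(-\Delta)^{\gamma/2}U$ via the subordination-type integral $(\star)$ and reduces everything to Beta integrals, which is also classical (cf.\ Stein, Chap.~V). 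Both methods are equally rigorous; the chief practical difference is that the paper's technique generalizes immediately to the non-symbolic operator $L$ in Lemma~\ref{lem3.5}---one only needs the single estimate $\|LP(\cdot,y)\|_{L_1}\le N y^{-\sigma}$---whereas your formula $(\star)$ is tied to the specific relation between $(-\Delta)^{\gamma/2}$ and the Poisson semigroup and would not adapt to $L$. Two minor points worth tightening in your write-up: (i) for the $L_\infty$ bound at $y=0$ you should indicate what controls the integral on $t\in(0,1)$ (any exponent $\gamma'\in(\beta,\min(k,\alpha+\beta))$ via Property~(1) and Property~(4) does the job), and (ii) the mollification you invoke is needed not only to justify differentiation under the integral but also to make sense of $(-\Delta)^{\beta/2}u$ pointwise before the estimates are in hand.
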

\begin{proof}
We remark that the lemma follows from the results in \cite[Chap. V.3]{St70} as well as Lemma \ref{lem3.3}; see, for instance, the proof of \cite[Lemma 12]{MP09}. Here we choose to give a self-contained argument, which also will be useful in the proof of Lemma \ref{lem3.5}.

Thanks to Property (6), we may assume $\alpha\in (0,1)$.
Take an integer $k$ greater than $\alpha+\beta$.
Then by Property (4), we have
\begin{equation}
                                        \label{eq22.00}
\|(-\Delta)^{\beta/2}u\|_{\Lambda^\alpha}\simeq\|(-\Delta)^{\beta/2}u\|_{L_\infty}
+\sup_{y>0}y^{k-\alpha}\|D^k_y (-\Delta)^{\beta/2}U(\cdot,y)\|_{L_\infty}.
\end{equation}
Due to the semi-group property of harmonic extensions,
$$
U(\cdot,y)=U(\cdot, y/2)*P(\cdot,y/2).
$$
By using the inequality
$$
\|(-\Delta)^{\beta/2}P(\cdot,y/2)\|_{L_1}\le Ny^{-\beta}
$$
and Young's inequality,
\begin{align}
                                \label{eq22.01}
&y^{k-\alpha}\|D^k_y (-\Delta_x)^{\beta/2}U(\cdot,y)\|_{L_\infty}\nonumber\\
&=y^{k-\alpha}\|D^k_y U(\cdot,y/2)*(-\Delta)^{\beta/2}P(\cdot,y/2)\|_{L_\infty}\nonumber\\
&\le y^{k-\alpha}\|D^k_y U(\cdot,y/2)\|_{L_\infty}\|(-\Delta)^{\beta/2}P(\cdot,y/2)\|_{L_1}\nonumber\\
&\le Ny^{k-\alpha-\beta}\|D^k_y U(\cdot,y/2)\|_{L_\infty}\le N\|u\|_{\Lambda^{\alpha+\beta}}.
\end{align}
We estimate the first term on the right-hand side of \eqref{eq22.00} by
\begin{equation}
									\label{eq22.02}
\|(-\Delta)^{\beta/2}U(\cdot,y)\|_{L_\infty}+Ny^\alpha\|(-\Delta)^{\beta/2}u\|_{\Lambda^\alpha},	 
\end{equation}
which is from the inequalities
\begin{multline*}
\|(-\Delta)^{\beta/2}u\|_{L_\infty} \le  \|(-\Delta)^{\beta/2}U(\cdot,y)\|_{L_\infty}
+\|(-\Delta)^{\beta/2}u(\cdot) - (-\Delta)^{\beta/2}U(\cdot,y)\|_{L_\infty}\\
\le \|(-\Delta)^{\beta/2}U(\cdot,y)\|_{L_\infty}+N \|(-\Delta)^{\beta/2}u\|_{\Lambda^\alpha} \| P(\cdot,y)|\cdot|^\alpha\|_{L_1}.
\end{multline*}
Clearly, the second term in \eqref{eq22.02} can be absorbed to the left-hand side of \eqref{eq22.00} if we choose $y$ sufficiently small, and the first term in \eqref{eq22.02} is bounded by $N\|u\|_{L_\infty}$.
This together with \eqref{eq22.01} and Property (1) gives
$$
\|u\|_{\Lambda^\alpha}+\|(-\Delta)^{\beta/2}u\|_{\Lambda^\alpha}\le N\|u\|_{\Lambda^{\alpha+\beta}}.
$$

For the other direction, it suffices to show that
for any $y>0$,
\begin{equation}
                                    \label{eq22.22}
y^{k+2l-\alpha-\beta}\|D^{k+2l}_y U(\cdot,y)\|_{L_\infty}\le N\|(-\Delta)^{\beta/2}u\|_{\Lambda^\alpha},
\end{equation}
where $k$ and $l$ are integers greater than $\alpha$ and $\beta/2$, respectively. As before, we write
\begin{align*}
&y^{k+2l-\alpha-\beta}\|D^{k+2l}_y U(\cdot,y)\|_{L_\infty}=
y^{k+2l-\alpha-\beta}\|D^{k}_y (-\Delta)^l U(\cdot,y)\|_{L_\infty}\\
&=y^{k+2l-\alpha-\beta}\|D^{k}_y (-\Delta)^{\beta/2} U(\cdot,y/2)*(-\Delta)^{l-\beta/2}P(\cdot,y/2)\|_{L_\infty}\\
&\le y^{k+2l-\alpha-\beta}\|D^{k}_y (-\Delta)^{\beta/2}U(\cdot,y/2)\|_{L_\infty} \|(-\Delta)^{l-\beta/2}P(\cdot,y/2)\|_{L_1}\\
&\le Ny^{k-\alpha}\|D^{k}_y (-\Delta)^{\beta/2}U(\cdot,y/2)\|_{L_\infty},
\end{align*}
which is less than the right-hand side of \eqref{eq22.22}. This completes the proof of the lemma.
\end{proof}

\begin{lemma}
                                    \label{lem3.5}
Let $\alpha>0$ and $\sigma\in (0,2)$ be constants. Assume that
$$
|K(y)|\le (2-\sigma)\Lambda |y|^{-d-\sigma}.
$$
Then  $L$ defined in \eqref{eq23.22.24} is a continuous operator from $\Lambda^{\alpha+\sigma}$ to $\Lambda^{\alpha}$, and we have
$$
\|Lu\|_{\Lambda^{\alpha}}\le N\Lambda\|u\|_{\Lambda^{\sigma+\alpha}},
$$
where $N=N(d,\sigma,\alpha)>0$ is uniformly bounded as $\sigma\to 2$.
\end{lemma}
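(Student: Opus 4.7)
I would follow the strategy used in Lemma~\ref{lem3.4}, based on the harmonic extension $U(x,y)=P(\cdot,y)*u(x)$ and the characterization of $\Lambda^\alpha$ in Property (4). By Property (3), it suffices to treat $\alpha\in(0,1)$. Since $L$ is translation invariant, it commutes with convolution and with $D_y$, so $D_y^k LU(\cdot,y_0)=L(D_y^k U(\cdot,y_0))$ for every integer $k$. Picking $k>\sigma+\alpha$, Property (4) gives
\[
\|Lu\|_{\Lambda^\alpha}\simeq \|Lu\|_{L_\infty} + \sup_{y_0>0} y_0^{k-\alpha}\|L(D_y^k U(\cdot,y_0))\|_{L_\infty},
\]
so the task reduces to estimating $L$ applied to two specific smooth functions.

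For $\|Lu\|_{L_\infty}$, I would split the integral in \eqref{eq23.22.24} at $|y|=1$. On the inner region, Taylor's formula together with the H\"older regularity of $u$ (or of $\nabla u$ when $\sigma\ge 1$) contained in $\Lambda^{\sigma+\alpha}$ bounds the integrand by $N|y|^{\sigma+\alpha}K(y)\|u\|_{\Lambda^{\sigma+\alpha}}$; the $(2-\sigma)$ prefactor in $K$ cancels the $1/(2-\sigma)$ from $\int_0^1 r^{1-\sigma}\,dr$, yielding the uniformity as $\sigma\to 2$. On $|y|>1$ the trivial $L_\infty$ bounds on $u$ (and $\nabla u$ when $\sigma\ge 1$) together with $\int_{|y|>1}K\,dy\le N\Lambda$ suffice. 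For the second term, set $w=D_y^k U(\cdot,y_0)$. Using the vanishing means of $D_x^j D_y^k P(\cdot,y_0)$ and $z\, D_x^j D_y^k P(\cdot,y_0)$ for $j+k\ge 2$, combined with a Taylor expansion of $u$ of order $\min(\sigma+\alpha,1+\alpha)$, I would obtain the scaling bounds
\[
\|D_x^j w\|_{L_\infty}\le N y_0^{\sigma+\alpha-k-j}\|u\|_{\Lambda^{\sigma+\alpha}},\qquad j=0,1,2.
\]
Then $|Lw(x)|$ is estimated by splitting the defining integral at $|y|=y_0$: the second-order Taylor remainder $\|D^2w\|_{L_\infty}|y|^2$ on $|y|\le y_0$ (together with $\int_{|y|\le y_0}|y|^2 K\,dy\le N\Lambda y_0^{2-\sigma}$) and the crude bound $2\|w\|_{L_\infty}+|y|\|\nabla w\|_{L_\infty}$ on $|y|>y_0$ (together with $\int_{|y|>y_0}|y|^j K\,dy\le N\Lambda y_0^{j-\sigma}$ for $j=0,1$, when convergent) give contributions each scaling as $N\Lambda y_0^{\alpha-k}\|u\|_{\Lambda^{\sigma+\alpha}}$, as required.

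\textbf{Main obstacle.} The delicate case is $\sigma=1$: with $\chi^{(1)}=1_{B_1}$ and without the cancellation condition \eqref{eq21.47}, the middle annulus $y_0<|y|\le 1$ produces a spurious $\log(1/y_0)$ factor from $\int_{y_0<|y|\le 1}|y|K\,dy$ in the $|y|\|\nabla w\|$ term. I expect this to be absorbed either by sharpening the middle-region estimate via a H\"older bound $[\nabla w]_{C^\epsilon}\le N y_0^{\alpha-k-\epsilon}\|u\|_{\Lambda^{\sigma+\alpha}}$ (from analogous vanishing-mean arguments), so that the integrand there is instead bounded by $[\nabla w]_{C^\epsilon}|y|^{1+\epsilon}$ whose integral over $y_0<|y|\le 1$ no longer blows up, or more cleanly by recognizing $L$ as a Fourier multiplier whose symbol satisfies $|m(\xi)|\le N\Lambda|\xi|^\sigma$ (an immediate consequence of $|K|\le (2-\sigma)\Lambda|y|^{-d-\sigma}$ via scaling) and invoking a frequency-decomposition argument compatible with the harmonic extension that bypasses the logarithm altogether. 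In either route, the final constant depends on $\sigma$ only through factors that remain uniformly bounded as $\sigma\to 2$, guaranteed by the $(2-\sigma)$ prefactor in the kernel estimate.
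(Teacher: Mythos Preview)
Your overall architecture --- harmonic extension plus Property~(4), reducing to an $L_\infty$ bound on $L$ applied to a smooth function scaled by $y_0$ --- is the same as the paper's. The paper organizes the second step a bit more cleanly: rather than Taylor-expanding $w=D_y^kU(\cdot,y_0)$ directly, it uses the semigroup property and Young's inequality to reduce everything to the single kernel bound $\|LP(\cdot,y_0/2)\|_{L_1}\le N\Lambda y_0^{-\sigma}$, which by dilation becomes $\|LP(\cdot,1)\|_{L_1}\le N\Lambda$, proved via the pointwise decay $|LP(x,1)|\le N\Lambda|x|^{-d-\sigma}$ for $|x|\ge 1$.

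There is, however, a genuine gap in your treatment of $\sigma=1$: neither of your proposed workarounds succeeds, and in fact the cancellation condition \eqref{eq21.47} --- a standing hypothesis throughout the paper, invoked explicitly in Case~3 of the paper's proof --- is \emph{necessary} here. Your second route rests on a false claim: without \eqref{eq21.47} the bound $|m(\xi)|\le N\Lambda|\xi|$ need not hold. For instance, with $d=1$ and $K(y)=y^{-2}1_{(0,1)}(y)$ one computes that $\operatorname{Im}m(\xi)$ is of order $|\xi|\log|\xi|$ as $|\xi|\to\infty$, so $L$ fails to map $\Lambda^{1+\alpha}$ into $\Lambda^\alpha$ and the lemma is simply false without the cancellation assumption. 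Your first route also does not close: with $[\nabla w]_{C^\varepsilon}\le N y_0^{\alpha-k-\varepsilon}\|u\|_{\Lambda^{1+\alpha}}$ and $\int_{y_0<|y|<1}|y|^{1+\varepsilon}K(y)\,dy\le N/\varepsilon$, the product is $N y_0^{\alpha-k-\varepsilon}$, which after multiplying by $y_0^{k-\alpha}$ gives $N y_0^{-\varepsilon}$, unbounded as $y_0\to 0$. The correct fix is to use \eqref{eq21.47}: it allows you to replace $\chi^{(1)}=1_{B_1}$ by $1_{B_{y_0}}$, so that on $|y|>y_0$ the integrand is just $w(x+y)-w(x)$, and the crude bound $2\|w\|_{L_\infty}\int_{|y|>y_0}K(y)\,dy\le N\|w\|_{L_\infty}y_0^{-1}$ closes the estimate without any logarithm.
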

\begin{proof}
Following the first part of the proof of the previous lemma, it suffices to show
$$
\|LP(\cdot,y/2)\|_{L_1}\le N(d,\sigma)\Lambda y^{-\sigma},\quad\forall y>0.
$$
By a dilation, we only need to verify that
\begin{equation}
                                        \label{eq14.19}
\|LP(\cdot,1)\|_{L_1}\le N(d,\sigma)\Lambda.
\end{equation}
Recall that
$$
P(x,1)=C_d(1+|x|^2)^{-(d+1)/2}:=p(x).
$$
Because the symbol of $L$ is given by \eqref{eq1102}, $m(\xi)\le N|\xi|^\sigma$ and
$$
LP(x,1)=\cF^{-1}(m(\xi)e^{-|\xi|}),
$$
it is easily seen that
\begin{equation}
                            \label{eq15.30}
\|LP(\cdot,1)\|_{L_\infty}\le N(d,\sigma)\Lambda.
\end{equation}
We will prove a pointwise decay estimate of $LP(\cdot,1)$:
\begin{equation}
                                    \label{eq16.20}
|LP(x,1)|\le N(d,\sigma)\Lambda|x|^{-(d+\sigma)},\quad\forall x\in B_1^c,
\end{equation}
which together with \eqref{eq15.30} yields \eqref{eq14.19}.
We fix a $x\in B_1^c$ and consider three cases: $\sigma<1$, $\sigma>1$, and $\sigma=1$.

{\em Case 1: $\sigma<1$.} In this case,
\begin{equation}
                                        \label{eq14.50}
|LP(x,1)|\le N(2-\sigma)\Lambda\int_{\bR^d}M_1(x,y)|y|^{-d-\sigma}\,dy,
\end{equation}
where
$$
M_1(x,y)=\big|p(x+y)-p(x)\big|.
$$
We divide $\bR^d$ into three regions. For any $y\in B_{|x|/2}$, we have
\begin{equation}
                                    \label{eq16.14}
M_1(x,y)\le N(d)|y|(1+|x|^2)^{-(d+2)/2}.
\end{equation}
Thus
\begin{align*}
\int_{B_{|x|/2}}M_1(x,y)|y|^{-d-\sigma}\,dy
&\le N(d)(1+|x|^2)^{-(d+2)/2}
\int_{B_{|x|/2}}|y|^{1-d-\sigma}\,dy\nonumber\\
&\le N(d)(1-\sigma)^{-1}|x|^{-(d+1+\sigma)}.
\end{align*}
For any $y\in B_{|x|/2}(-x)$, we have $|y|\in (|x|/2,3|x|/2)$,
$$
M_1(x,y)\le N(d)(1+|x+y|^2)^{-(d+1)/2},
$$
which implies
\begin{multline*}
\int_{B_{|x|/2}(-x)}M_1(x,y)|y|^{-d-\sigma}\,dy
\le N(d)|x|^{-d-\sigma}
\int_{B_{|x|/2}(-x)}(1+|x+y|^2)^{-(d+1)/2}\,dy\\
\le N(d)|x|^{-d-\sigma}
\int_{B_{|x|/2}}(1+|y|^2)^{-(d+1)/2}\,dy \le N(d)|x|^{-(d+\sigma)}.
\end{multline*}
For any $y\in B_{|x|/2}^c\cap B^c_{|x|/2}(-x):=\Omega$, we have
$$
M_1(x,y)\le N(d)(1+|x|^2)^{-(d+1)/2}.
$$
Therefore,
\begin{align*}
\int_{\Omega}M_1(x,y)|y|^{-d-\sigma}\,dy
&\le N(d)\int_{B_{|x|/2}^c}|y|^{-d-\sigma}(1+|x|^2)^{-(d+1)/2}\,dy\nonumber\\
&\le N(d)\sigma^{-1}|x|^{-(d+1+\sigma)}.
\end{align*}
Combining the estimates above with \eqref{eq14.50}, we obtain \eqref{eq16.20}.

{\em Case 2: $\sigma>1$.} In this case, we have \eqref{eq14.50} with $M_1$ replaced by
$$
M_2(x,y)=\big|p(x+y)-p(x)-y\cdot \nabla p(x)\big|.
$$
For any $y\in B_{|x|/2}$, we have
$$
M_2(x,y)\le N|y|^2(1+|x|^2)^{-(d+3)/2}.
$$
Hence
\begin{align*}
\int_{B_{|x|/2}}M_2(x,y)|y|^{-d-\sigma}\,dy
&\le N(1+|x|^2)^{-(d+3)/2}
\int_{B_{|x|/2}}|y|^{2-d-\sigma}\,dy\nonumber\\
&\le N(d)(2-\sigma)^{-1}|x|^{-(d+1+\sigma)}.
\end{align*}
In $B_{|x|/2}(-x)$ we estimates as in Case 1. In $\Omega$, the estimate \eqref{eq16.14} holds with $M_2$ in place of $M_1$. Therefore,
\begin{align*}
\int_{\Omega}M_2(x,y)|y|^{-d-\sigma}\,dy
&\le N\int_{B_{|x|/2}^c}|y|^{1-d-\sigma}(1+|x|^2)^{-(d+2)/2}\,dy\nonumber\\
&\le N|x|^{-(d+1+\sigma)}.
\end{align*}
Thus we obtain \eqref{eq16.20} in this case.

{\em Case 3: $\sigma=1$.} In this case, using the condition \eqref{eq21.47} we get
$$
|LP(x,1)|\le N(d)\Lambda\Big(\int_{B_{|x|/2}}M_2(x,y)|y|^{-d-\sigma}\,dy
+\Lambda\int_{B_{|x|/2}^c}M_1(x,y)|y|^{-d-\sigma}\,dy\Big).
$$
We then derive \eqref{eq16.20} by estimating the first integral on the right-hand side as in Case 2 and the second integral as in Case 1.

The lemma is proved.
\end{proof}

Now we give the proof of Theorem \ref{thm2}.

\begin{proof}[Proof of Theorem \ref{thm2}]
Thanks to Lemma \ref{lem3.5}, we have the continuity of $L-\lambda$ and \eqref{eq22.100}.
By Property (6), the estimate \eqref{eq17.24} follows once we prove it
for an $\alpha \in (0,1)$ and any $u \in \Lambda^{\sigma+\alpha}$.
In this case, \eqref{eq17.24} follows immediately from Theorem \ref{mainth01} i), the standard mollifications, and Lemma \ref{lem3.4}.
For the proof of \eqref{eq0316_01}, we use the proof of Theorem \ref{mainth01} and Lemma \ref{lem3.2}. In particular, Lemma \ref{lem3.2} allows us to replace the last term $[f]_{C^\alpha}$ in \eqref{eq11.34} by $[f]_{\Lambda^\alpha}$.
Therefore, it remains to show the unique solvability of the equation \eqref{elliptic}. With the a priori estimates in hand, the argument below is quite standard; see, for instance, \cite[Chap. 3 \& 4]{Kr96}.

Due to Property (6), without loss of generality we may assume $\alpha\in (0,1)$. We first consider the special case $L=-(-\Delta)^{\sigma/2}$. In this case, for any $\varepsilon>0$ let $f^{(\varepsilon)}$ be the standard mollification of $f$. As $f\in \Lambda^{\alpha}$, it is easily seen that $f^{(\varepsilon)}\to f$ in $\Lambda^\beta$ for any $\beta\in (0,\alpha)$. We fix a $\beta\in (0,\alpha)$. By Lemma \ref{lem3.3}, the equation
\begin{equation}
                                        \label{eq11.56}
-(-\Delta)^{\sigma/2}u^\varepsilon-\lambda u^\varepsilon=f^{(\varepsilon)}
\end{equation}
has a solution $u^\varepsilon=G_\sigma*f^{(\varepsilon)}$ in $C^\infty$ with bounded derivatives. Due to the estimate \eqref{eq17.24}, we have
\begin{equation}
                                        \label{eq11.57}
\|u^\varepsilon\|_{\Lambda^{\sigma+\alpha}}\le N\|f^{(\varepsilon)}\|_{\Lambda^{\alpha}}
\le N\|f\|_{\Lambda^{\alpha}},
\end{equation}
where $N=N(d,\nu,\Lambda,\sigma,\alpha,\lambda)$, and
$$
\|u^{\varepsilon}-u^{\tilde \varepsilon}\|_{\Lambda^{\sigma+\beta}}\le N\|f^{(\varepsilon)}-f^{(\tilde \varepsilon)}\|_{\Lambda^{\beta}}\to 0\quad \text{as}\,\,\varepsilon,\tilde \varepsilon\to 0.
$$
Therefore, there exists a function $u\in \Lambda^{\sigma+\beta}$ such that $u^{\varepsilon}\to u$ in $\Lambda^{\sigma+\beta}$. Passing to the limit in \eqref{eq11.56} and \eqref{eq11.57} and using Lemma \ref{lem3.4}, we infer that $u$ is a solution to
\begin{equation*}
-(-\Delta)^{\sigma/2}u-\lambda u=f
\end{equation*}
and it satisfies \eqref{eq17.24}. The general case is then derived from the special case by using \eqref{eq17.24} and the standard method of continuity together with the continuity estimate in Lemma \ref{lem3.5}. Finally, the uniqueness is a direct consequence of the $L_\infty$ estimate \eqref{eq07.09.53}. This completes the proof of the theorem.
\end{proof}

\section{Local estimates and $x$-dependent kernels}
                                                \label{sec5}

In this section we assume that $0 < \sigma < 2$ and $\alpha \in (0,1)$. We first derive several local estimates, i.e., Corollary \ref{thm3}, and then give an outline of the proof of Schauder estimates for operators with $x$-dependent kernels, i.e., Theorem \ref{thm4}.

\begin{proof}[Proof of Corollary \ref{thm3}]
We take a cut-off function $\eta\in
C_0^\infty(\bR^d)$ with a compact support in $B_2$ satisfying $\eta\equiv 1$ on $B_1$. Then it is
easily seen that
$$
L(\eta u)-\lambda \eta u=\eta f+L(\eta u)-\eta Lu
\quad
\text{in}\,\,\bR^d.
$$
Applying the global estimate in Theorem \ref{mainth01} to the equation above gives
$$
[(-\Delta)^{\sigma/2}(\eta u)]_{C^\alpha(B_1)}\le N[\eta f+L(\eta u)-\eta Lu]_{C^\alpha(\bR^d)}
$$
$$
\le N[\eta f]_{C^\alpha(\bR^d)}+N[L(\eta u)-\eta Lu]_{C^\alpha(\bR^d)}.
$$
Thus, by the triangle inequality,
\begin{multline}
                        \label{eq21.39}
[(-\Delta)^{\sigma/2}u]_{C^\alpha(B_1)}\le
[ \eta (-\Delta)^{\sigma/2}u - (-\Delta)^{\sigma/2}(\eta u) ]_{C^\alpha(B_1)}
\\
+ [(-\Delta)^{\sigma/2}(\eta u)]_{C^\alpha(B_1)}
\le [ \eta (-\Delta)^{\sigma/2}u - (-\Delta)^{\sigma/2}(\eta u) ]_{C^\alpha(B_1)}
\\
+N[\eta f]_{C^\alpha(\bR^d)}+N[L(\eta u)-\eta Lu]_{C^\alpha(\bR^d)}.
\end{multline}
Note that
\begin{equation}
							\label{eq1222}
[\eta f]_{C^\alpha(\bR^d)} \le [f]_{C^\alpha(B_3)}
+ N \|f\|_{L_\infty(B_3)}.
\end{equation}
Now we estimate the third term on the right-hand side of \eqref{eq21.39}.
The estimate of the term $[ \eta (-\Delta)^{\sigma/2}u - (-\Delta)^{\sigma/2}(\eta u) ]_{C^\alpha(B_1)}$ is similarly obtained by using $(-\Delta)^{\sigma/2}$ in place of $L$. Let
\begin{align}
							\label{eq1223_01}
h(x) : &= L(\eta u)-\eta Lu\nonumber\\
&=\int_{\bR^d}\Big(\big(\eta(x+y)-\eta(x)\big)u(x+y)-y\cdot \nabla \eta(x) u(x)\chi^{(\sigma)}(y)\Big)K(y)\,dy.
\end{align}
Throughout the proof, we set
\begin{equation}
							\label{eq1223}
\delta_y g(x) = g(x+y) - g(x),
\quad
\delta_y^2 g(x) = g(x+y) - g(x) - y \cdot \nabla g(x).
\end{equation}

{\em (i)} For $\sigma\in (0,1)$,
from \eqref{eq1223_01} and \eqref{eq1223} we have
$$
\frac{h(x_0) - h(x_1)}{|x_0 - x_1|^\alpha}
= \int_{\bR^d} \delta_y \eta (x_0)\frac{u(x_0+y)-u(x_1+y)}{|x_0 - x_1|^\alpha} K(y) \, d y
$$
$$
+ \int_{\bR^d} \frac{\delta_y \eta (x_0) - \delta_y \eta (x_1)}{|x_0 - x_1|^\alpha} u(x_1+y) K(y) \, dy := I_1 + I_2,
$$
where $x_0, x_1 \in \bR^d$.
Observe that
$$
|I_1| \le \int_{\bR^d} \left|\delta_y \eta (x_0)\right| \frac{\left| u(x_0 + y) - u(x_1 + y) \right|}{|x_0 - x_1|^\alpha} K(y) \, dy
\le N [u]_{C^\alpha(\bR^d)},
$$
and
\begin{align*}
|I_2|
&\le \int_{\bR^d} \left( [\eta]_{C^\alpha(\bR^d)} 1_{|y|\ge 1} + [\nabla\eta]_{C^\alpha(\bR^d)} |y| 1_{|y|<1}\right) |u(x_1+y)| K(y) \, dy\\
&\le N \|u\|_{L_\infty(\bR^d)}.
\end{align*}
Thus
\begin{equation}
							\label{eq1223_02}
[L(\eta u)-\eta Lu]_{C^\alpha(\bR^d)}
\le  N \|u\|_{C^\alpha(\bR^d)}.
\end{equation}

{\em (ii)} For $\sigma\in (1,2)$, we write
$$
h(x) = \int_{\bR^d} \delta_y \eta(x) \delta_y u(x) K(y) \, dy
+ \int_{\bR^d} \delta_y^2 \eta(x) u(x) K(y) \, dy.
$$
Then, for $x_0, x_1 \in \bR^d$,
\begin{align*}
\frac{h(x_0) - h(x_1)}{|x_0-x_1|^\alpha}
=& \int_{\bR^d} \delta_y \eta(x_0) \frac{\delta_y u(x_0) - \delta_y u(x_1)}{|x_0-x_1|^\alpha} K(y) \, dy\\
&+ \int_{\bR^d} \frac{\delta_y \eta(x_0) - \delta_y \eta(x_1)}{|x_0-x_1|^\alpha} \delta_y u(x_1) K(y) \, dy\\
&+ \int_{\bR^d} \delta_y^2 \eta(x_0) \frac{u(x_0) - u(x_1)}{|x_0-x_1|^\alpha} K(y) \, dy\\
&+ \int_{\bR^d} \frac{\delta_y^2 \eta(x_0) - \delta_y^2 \eta(x_1)}{|x_0-x_1|^\alpha} u(x_1) K(y) \, dy
:= I_3 + I_4 + I_5 + I_6.
\end{align*}
Observe that
$$
|I_3|
\le \int_{\bR^d} |\eta(x_0+y)-\eta(x_0)| [\nabla u]_{C^\alpha(\bR^d)} |y| K(y) \, dy
\le N [\nabla u]_{C^\alpha(\bR^d)},
$$
$$
|I_4|
\le
\int_{\bR^d} [\nabla \eta]_{C^\alpha(\bR^d)} |y| | \delta_y u(x_1) | K(y) \, dy
\le N \|u\|_{L_\infty(\bR^d)} + N \|\nabla u\|_{L_\infty(\bR^d)},
$$
$$
|I_5|
\le N [u]_{C^\alpha(\bR^d)},
\quad
|I_6|
\le N \|u\|_{L_\infty(\bR^d)}.
$$
Hence
\begin{equation}
						\label{eq1223_03}
[L(\eta u)-\eta Lu]_{C^\alpha(\bR^d)}
\le  N \|u\|_{C^{1+\alpha}(\bR^d)}.
\end{equation}

{\em (iii)} In the last case $\sigma=1$, by using \eqref{eq21.47},
for any $\delta\in (0,1)$ we write
\begin{align*}
h(x) = &\int_{B_\delta} \delta_y\eta(x) \delta_y u(x) K(y) \, dy
+ \int_{B_\delta} \delta_y^2 \eta(x) u(x) K(y) \, dy\\
&+ \int_{B_\delta^c} \delta_y \eta(x) u(x+y) K(y) \, dy.
\end{align*}
Then, for $x_0, x_1 \in \bR^d$,
\begin{align*}
\frac{h(x_0) - h(x_1)}{|x_0-x_1|^\alpha}
=& \int_{B_\delta} \delta_y \eta(x_0) \frac{\delta_y u(x_0) - \delta_y u(x_1)}{|x_0-x_1|^\alpha} K(y) \, dy\\
&+ \int_{B_\delta} \frac{\delta_y \eta(x_0) - \delta_y \eta(x_1)}{|x_0-x_1|^\alpha} \delta_y u(x_1) K(y) \, dy\\
&+ \int_{B_\delta} \delta_y^2 \eta(x_0) \frac{u(x_0) - u(x_1)}{|x_0-x_1|^\alpha} K(y) \, dy\\
&+ \int_{B_\delta} \frac{\delta_y^2 \eta(x_0) - \delta_y^2 \eta(x_1)}{|x_0-x_1|^\alpha} u(x_1) K(y) \, dy\\
&+ \int_{B_\delta^c} \delta_y \eta(x_0) \frac{ u(x_0+y) - u(x_1+y) }{|x_0 - x_1|^\alpha} K(y) \, dy\\
&+ \int_{B_\delta^c} \frac{ \delta_y \eta(x_0) - \delta_y \eta(x_1) }{|x_0 - x_1|^\alpha} u(x_1+y) K(y) \, dy
:= \sum_{i=7}^{12}I_i.
\end{align*}
We see that
\begin{align*}
&|I_7| \le N \delta [\nabla u]_{C^\alpha(\bR^d)},\quad
|I_8| \le N \delta \|Du\|_{L_\infty(\bR^d)},\\
&|I_9| \le N \delta [u]_{C^\alpha(\bR^d)},
\quad
|I_{10}| \le N \delta \|u\|_{L_\infty(\bR^d)},\\
&|I_{11}| \le N \delta^{-1} [u]_{C^\alpha(\bR^d)},
\quad
|I_{12}| \le N \delta^{-1} \|u\|_{L_\infty(\bR^d)}.
\end{align*}
Therefore, upon choosing an appropriate $\delta \in (0,1)$ and using an interpolation inequality, we have
\begin{equation}
							\label{eq1223_04}
[L(\eta u)-\eta Lu]_{C^\alpha(\bR^d)}
\le  \varepsilon \|u\|_{C^{1+\alpha}(\bR^d)} + N(\varepsilon) \|u\|_{L_\infty(\bR^d)}.
\end{equation}

The inequalities \eqref{eq1223_02}, \eqref{eq1223_03}, and \eqref{eq1223_04},
as well as the same inequalities with $(-\Delta)^{\sigma/2}$ in place of $L$ along with \eqref{eq21.39} and \eqref{eq1222}
give the desired inequalities \eqref{eq21.06}, \eqref{eq21.06b}, and \eqref{eq21.06c}.
\end{proof}

We recall that the classical Schauder theory for second-order equations with variable coefficients is built upon the estimates for equations with constant coefficients by using a standard perturbation argument and a localization technique; see, for instance, \cite{Kr96}. This method works equally well for the nonlocal operator \eqref{eq23.22.24} when $a(x,y)$ is $C^\alpha$ H\"{o}lder continuous with respect to $x$. However, the corresponding argument is a bit more lengthy. See, for instance,  the perturbation and localization arguments in the proof of Theorem 1.2 in \cite{Bas09}. Here we give a sketched proof.

We first derive the following corollary of Lemma \ref{lem3.5}.

\begin{corollary}
                                                \label{cor3.5}
Let $\sigma\in (0,2)$ and $\alpha\in (0,1)$ be constants. Assume that $K(x,y)=a(x,y)|y|^{-d-\sigma}$, where $a(\cdot,y)$ is a $C^\alpha$ function for any $y\in \bR^d$ with a uniform $C^\alpha$ norm. In the case $\sigma=1$, the condition \eqref{eq21.47} is also satisfied for any $x\in \bR^d$. Then  $L$ defined in \eqref{eq23.22.24} is a continuous operator from $\Lambda^{\alpha+\sigma}$ to $\Lambda^{\alpha}$, and for any $\beta\in (0,\alpha)$ we have
$$
\|Lu\|_{\Lambda^{\alpha}}\le N_1\sup_y\|a(\cdot,y)\|_{L_\infty}\|u\|_{\Lambda^{\sigma+\alpha}}
+N_2\sup_y[a(\cdot,y)]_{C^\alpha}\|u\|_{\Lambda^{\sigma+\beta}},
$$
where $N_1=N_1(d,\sigma,\alpha)>0$ and $N_2=N_1(d,\sigma,\alpha,\beta)>0$ are constants.
\end{corollary}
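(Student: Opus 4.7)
The strategy is to reduce to the constant-coefficient case by a freezing argument. For each $z \in \bR^d$, let $L_z$ denote the operator with the frozen kernel $K(z,\cdot) = a(z,\cdot)|\cdot|^{-d-\sigma}$, so that $Lu(x) = L_x u(x)$. When $\sigma = 1$, the hypothesis \eqref{eq21.47} holds for $K(z,\cdot)$ for every $z$, so Lemma \ref{lem3.5} applies to each $L_z$ with effective upper bound $\Lambda_0 := \sup_y \|a(\cdot,y)\|_{L_\infty}$. In particular, for any $x \in \bR^d$,
\[
|Lu(x)| = |L_x u(x)| \le \|L_x u\|_{\Lambda^\alpha} \le N \Lambda_0 \|u\|_{\Lambda^{\sigma+\alpha}},
\]
which takes care of the $L_\infty$ part of the $\Lambda^\alpha = C^\alpha$ norm (using Property (2) of $\Lambda^\alpha$ since $\alpha \in (0,1)$).

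For the $C^\alpha$ seminorm, I would fix $x_1, x_2 \in \bR^d$ and use the telescoping decomposition
\[
Lu(x_1) - Lu(x_2) = \bigl[L_{x_1} u(x_1) - L_{x_1} u(x_2)\bigr] + \bigl[(L_{x_1} - L_{x_2})u\bigr](x_2).
\]
The first bracket is bounded by $[L_{x_1} u]_{C^\alpha} |x_1 - x_2|^\alpha \le N_1 \Lambda_0 \|u\|_{\Lambda^{\sigma+\alpha}} |x_1 - x_2|^\alpha$ via Lemma \ref{lem3.5} applied to $L_{x_1}$, which has a constant-in-space kernel bounded pointwise by $\Lambda_0 |y|^{-d-\sigma}$.

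For the second bracket, observe that $L_{x_1} - L_{x_2}$ is again a constant-in-$x$ operator of the form \eqref{eq23.22.24}, with kernel $\tilde K(y) := [a(x_1,y) - a(x_2,y)]|y|^{-d-\sigma}$. Two points must be checked before invoking Lemma \ref{lem3.5} on this difference operator: (i) the pointwise bound $|\tilde K(y)| \le \sup_y [a(\cdot,y)]_{C^\alpha} |x_1 - x_2|^\alpha |y|^{-d-\sigma}$, which is immediate; and (ii) when $\sigma = 1$, the cancellation condition \eqref{eq21.47} for $\tilde K$, which follows by linearity from the same condition imposed on $K(x_1,\cdot)$ and $K(x_2,\cdot)$. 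Applying Lemma \ref{lem3.5} to $L_{x_1} - L_{x_2}$ with regularity index $\beta \in (0,\alpha)$ then gives
\[
\bigl|[(L_{x_1} - L_{x_2})u](x_2)\bigr| \le \|(L_{x_1} - L_{x_2})u\|_{\Lambda^\beta} \le N_2 \sup_y [a(\cdot,y)]_{C^\alpha} |x_1-x_2|^\alpha \|u\|_{\Lambda^{\sigma+\beta}},
\]
where $N_2 = N_2(d,\sigma,\beta)$. Combining the two bounds, dividing by $|x_1 - x_2|^\alpha$, and taking the supremum yields the desired estimate on $[Lu]_{C^\alpha}$; together with the $L_\infty$ bound above, this is exactly the stated inequality.

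The argument is essentially mechanical once the telescoping split is in place; the only point requiring a moment's care is the verification of the cancellation condition for the difference kernel when $\sigma = 1$, which reduces to linearity of the integral in \eqref{eq21.47}. The reason for introducing the auxiliary exponent $\beta < \alpha$ in the second term is that we wish to keep the coefficient of $\|u\|_{\Lambda^{\sigma+\alpha}}$ dependent only on $\sup_y \|a(\cdot,y)\|_{L_\infty}$, which will be essential for the perturbation/continuity argument used in the proof of Theorem \ref{thm4}.
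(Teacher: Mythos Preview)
Your proof is correct and follows the same freezing-coefficient strategy as the paper: decompose $Lu(x_1)-Lu(x_2)$ into a ``same frozen operator at two points'' piece (handled by Lemma~\ref{lem3.5}) and a ``difference of frozen operators at one point'' piece. The only distinction is in how you treat the second piece: the paper bounds $|(L_{x_1}-L_{x_2})u(x_2)|$ by a direct integral estimate using Taylor's formula on $u$ (which is why it first reduces to $\sigma+\beta$ non-integer), whereas you invoke Lemma~\ref{lem3.5} once more, now applied to the difference kernel $\tilde K$ at regularity level $\beta$. Your route is slightly slicker---it avoids repeating the pointwise integral computation and does not require the preliminary reduction on $\beta$---while the paper's direct estimate makes the role of $\beta$ more transparent. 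Both rely on the same observation you flagged, namely that for $\sigma=1$ the cancellation condition \eqref{eq21.47} passes to $\tilde K$ by linearity, and that Lemma~\ref{lem3.5} only needs the bound $|\tilde K(y)|\le \Lambda'|y|^{-d-\sigma}$ rather than nonnegativity.
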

\begin{proof}
Without loss of generality, we may assume that $\beta$ is small such that $\sigma+\beta\in (0,1)$ when $\sigma\in (0,1)$ and $\sigma+\beta\in (1,2)$ when $\sigma\in [1,2)$. By a translation of the coordinates, it suffices to show that
\begin{equation}
                            \label{eq15.28}
|Lu(0)|\le N_1\sup_y\|a(\cdot,y)\|_{L_\infty}\|u\|_{\Lambda^{\sigma+\alpha}},
\end{equation}
and for any $x\in\bR^d$,
\begin{multline}
                                        \label{eq15.29}
|Lu(x)-Lu(0)|\\
\le \big(N_1\sup_y\|a(\cdot,y)\|_{L_\infty}\|u\|_{\Lambda^{\sigma+\alpha}}
+N_2\sup_y[a(\cdot,y)]_{C^\alpha}\|u\|_{\Lambda^{\sigma+\beta}}\big)|x|^\alpha.
\end{multline}
Let $L_0$ be the operator with the kernel $K(x,y)$ in $L$ replaced by $K(0,y)$.
Then the inequality \eqref{eq15.28} follows from Lemma \ref{lem3.5} and the obvious identity $Lu(0)=L_0u(0)$. By the triangle inequality,
\begin{equation}
                                \label{eq15.35}
|Lu(x)-Lu(0)|\le |Lu(x)-L_0u(x)|+|L_0u(x)-L_0u(0)|.
\end{equation}
We estimate the second term on the right-hand side of \eqref{eq15.35} by Lemma \ref{lem3.5},
\begin{align}
                                        \label{eq15.36}
|L_0u(x)-L_0u(0)|&\le N_1|x|^\alpha\sup_{y\in \bR^d} |a(0,y)|\|u\|_{\Lambda^{\sigma+\alpha}}\nonumber\\
&\le N_1|x|^\alpha\sup_y\|a(\cdot,y)\|_{L_\infty}\|u\|_{\Lambda^{\sigma+\alpha}}.
\end{align}
To estimate the first term on the right-hand side of  \eqref{eq15.35}, we compute
\begin{align*}
&|Lu(x)-L_0u(x)|\\
&\le \int_{\bR^d} \big|u(x+y) - u(x)-y\cdot \nabla u(x)\chi^{(\sigma)}(y)\big||a(x,y)-a(0,y)||y|^{-d-\sigma}\, dy\\
&\le \sup_y[a(\cdot,y)]_{C^\alpha}|x|^\alpha
\int_{\bR^d} \big|u(x+y) - u(x)-y\cdot \nabla u(x)\chi^{(\sigma)}(y)\big||y|^{-d-\sigma}\, dy.
\end{align*}
By Taylor's formula, we have for any $y\in B_1$,
$$
\big|u(x+y) - u(x)-y\cdot \nabla u(x)\chi^{(\sigma)}(y)\big|\le N|y|^{\sigma+\beta}\|u\|_{\Lambda^{\sigma+\beta}},
$$
and for any $y\in B_1^c$,
$$
\big|u(x+y)-u(x)-y\cdot \nabla u(x)\chi^{(\sigma)}(y)\big|\le N|y|^{s}\|D^su\|_{L_\infty},
$$
where $s=0$ when $\sigma\in (0,1]$ and $s=1$ when $\sigma\in (1,2)$. Therefore, we get
\begin{equation}
                                \label{eq16.09}
|Lu(x)-L_0u(x)|\le \sup_y[a(\cdot,y)]_{C^\alpha}|x|^\alpha\|u\|_{\Lambda^{\sigma+\beta}}.
\end{equation}
Collecting \eqref{eq15.35}, \eqref{eq15.36}, and \eqref{eq16.09}, we reach \eqref{eq15.29}. The corollary is proved.
\end{proof}

Now we are ready to complete the proof of Theorem \ref{thm4}
\begin{proof}[Proof of Theorem \ref{thm4}]
We first derive an a priori estimate for $[(-\Delta)^{\sigma/2} u]_{C^\alpha}$. Let
$$
M:=\sup_{x\in B_\delta}|(-\Delta)^{\sigma/2} u(x)-(-\Delta)^{\sigma/2} u(0)|/|x|^\alpha
$$
for some small constant $\delta>0$ to be chosen later. Rewrite \eqref{elliptic} as
$$
L_0 u-\lambda u=\tilde f:=f+(L_0-L)u,
$$
where $L_0$ is defined as in \eqref{eq23.22.24} with $K(0,y)$ in place of $K(x,y)$. Now by the local estimates in Corollary \ref{thm3} together with a scaling, we have
\begin{equation*}
M\le
N\|\tilde f\|_{C^\alpha(B_{3\delta})}+N(\delta)\|u\|_{C^\alpha(\bR^d)}
\end{equation*}
for $\sigma\in (0,1)$,
\begin{equation*}
M\le
N\|\tilde f\|_{C^\alpha(B_{3\delta})}+N(\delta)\|u\|_{C^{1+\alpha}(\bR^d)}
\end{equation*}
for $\sigma\in (1,2)$, and
\begin{equation}
                                \label{eq21.06cc}
M\le N\|\tilde f\|_{C^\alpha(B_{3\delta})}
+ N(\varepsilon, \delta)\|u\|_{L_\infty(\bR^d)}+\varepsilon\|u\|_{C^{1+\alpha}(\bR^d)}
\end{equation}
for $\sigma=1$ and any $\varepsilon\in (0,1)$. It remains to bound the $C^\alpha$ norm of
\begin{align*}
&\eta_{3\delta}(L_0-L)u(x)\\
&\,=\int_{\bR^d} \left(u(x+y) - u(x)-y\cdot \nabla u(x)\chi^{(\sigma)}(y)\right)|y|^{-d-\sigma}\tilde a(x,y)\, d y,
\end{align*}
where $\tilde a(x,y)=\eta_{3\delta}(x)\big(a(0,y)-a(x,y)\big)$, $\eta_{3\delta}(\cdot)=\eta(\cdot/(3\delta))$, and $\eta$ is the cut-off function in the proof of Corollary \ref{thm3}.
Using Corollary \ref{cor3.5}, one can bound the $C^\alpha$ norm of the above integral by a lower-order norm and
$$
N\sup_y\|\tilde a(\cdot,y)\|_{L_\infty}\|u\|_{\Lambda^{\sigma+\alpha}}\le N\delta^\alpha \|u\|_{\Lambda^{\sigma+\alpha}}.
$$
By a translation of coordinates and interpolation inequalities, we then reach
\begin{equation}
                        \label{eq6.06}
\|u\|_{\lambda^{\sigma+\alpha}}\le N\delta^\alpha\|u\|_{\lambda^{\sigma+\alpha}}+N\|f\|_{C^\alpha}+N(\delta) \|u\|_{L_\infty}.
\end{equation}
Here we have chosen $\varepsilon$ sufficiently small in \eqref{eq21.06cc} when $\sigma=1$.
By the maximum principle, it holds that
\begin{equation}
                                \label{eq6.15}
\lambda\|u\|_{L_\infty}\le \|f\|_{L_\infty}.
\end{equation}
Combining \eqref{eq6.06} and \eqref{eq6.15}, and taking $\delta$ sufficiently small, we obtain an apriori estimate
\begin{equation*}
\|u\|_{\lambda^{\sigma+\alpha}}\le N\|f\|_{C^\alpha}.
\end{equation*}
The continuity of $L$ has already been proved in Corollary \ref{cor3.5}.
The unique solvability then follows immediately by using the standard method of continuity argument. This completes the proof of the theorem.
\end{proof}

\section*{Acknowledgement}
The authors are grateful to Richard Bass for kindly informing us the paper \cite{Bas09} and to Nicolai V. Krylov for helpful comments. They would also like to thank the referee for many useful suggestions.

\bibliographystyle{plain}

\end{document}